\title{On dp-minimal fields}
\author{Will Johnson}
\DeclareMathOperator*{\forkindep}{\raise0.2ex\hbox{\ooalign{\hidewidth$\vert$\hidewidth\cr\raise-0.9ex\hbox{$\smile$}}}}
\newcommand{\Gal}{\operatorname{Gal}}
\newcommand{\res}{\operatorname{res}}
\newcommand{\Aut}{\operatorname{Aut}}
\newcommand{\acl}{\operatorname{acl}}
\newcommand{\dcl}{\operatorname{dcl}}
\newcommand{\tp}{\operatorname{tp}}
\newcommand{\dpr}{\operatorname{dp-rk}}
\newcommand{\mininf}{-_\infty}
\newtheorem{theorem}{Theorem}[section] 
\newtheorem{lemma}[theorem]{Lemma}
\newtheorem{claim}[theorem]{Claim}
\newtheorem{definition}[theorem]{Definition}
\newtheorem{corollary}[theorem]{Corollary}
\newtheorem{observation}[theorem]{Observation}
\newtheorem{remark}[theorem]{Remark}
\newtheorem{proposition}[theorem]{Proposition}
\newcommand{\Qq}{\mathbb{Q}}
\newcommand{\Zz}{\mathbb{Z}}
\newcommand{\Nn}{\mathbb{N}}
\newcommand{\Cc}{\mathbb{C}}
\newcommand{\Ff}{\mathbb{F}}
\newcommand{\Ll}{\mathbb{L}}
\begin{document}
\maketitle \abstract{We classify dp-minimal pure fields up to
  elementary equivalence.  Most are equivalent to Hahn series fields
  $K((t^\Gamma))$ where $\Gamma$ satisfies some divisibility
  conditions and $K$ is $\mathbb{F}_p^{alg}$ or a local field of
  characteristic zero.  We show that dp-small fields (including
  VC-minimal fields) are algebraically closed or real closed.}
\section{Introduction}
A theory is said to be \emph{VC-minimal} if there is a family
$\mathcal{F}$ of subsets of the home sort such that
\begin{itemize}
\item The family $\mathcal{F}$ is ind-definable without parameters.
\item Every definable subset of the home sort is a boolean combination
  of sets in $\mathcal{F}$.
\item If $X, Y \in \mathcal{F}$, then either $X \subseteq Y$, $Y \subseteq
  X$, or $X \cap Y = \emptyset$.
\end{itemize}

This definition is due to Adler \cite{firstVC}.  Strongly minimal,
C-minimal, o-minimal, and weakly o-minimal theories are all
VC-minimal.

A theory is \emph{not dp-minimal} if there is a model $M$ and formulas
$\phi(x;y)$, $\psi(x;z)$ with $|x| = 1$, and elements $a_{ij}, b_i,
c_j$ such that for all $i, j, i', j'$,
\begin{align*}
  i &= i' \iff M \models \phi(a_{ij},b_{i'}) \\
  j &= j' \iff M \models \psi(a_{ij},c_{j'})
\end{align*}
Otherwise, it is said to be dp-minimal.  Dp-minimality first appeared
in Shelah \cite{ShelahDP} and was isolated as an interesting concept
by Onshuus and Usvyatsov \cite{firstDP}.  It is known that VC-minimal
theories and $p$-minimal theories are dp-minimal---see
\cite{dpExamples}.  Dp-minimality is equivalent to having dp-rank 1;
we will discuss dp-rank in \S \ref{sec:dpr} below.

VC-minimality is not preserved under reducts, but dp-minimality is.

Here are our main results, which essentially classify dp-minimal
fields up to elementary equivalence as pure fields.

\begin{theorem}
  \label{thm-obnoxious}
  Let $(K,v)$ be a henselian defectless valued field, with residue
  field $k$ and value group $\Gamma$ (possibly trivial).  Suppose
  \begin{itemize}
  \item $k \models ACF_p$ or $k$ is elementarily equivalent to a local
    field of characteristic 0.
  \item For every $n$, $|\Gamma/n \Gamma|$ is finite
  \item If $k$ has characteristic $p$, and $\gamma \in [-v(p),v(p)]$
    then $\gamma \in p \cdot \Gamma$.  Here $[-v(p),v(p)]$ denotes
    $\Gamma = (-\infty,\infty)$ if $K$ has characteristic $p$.
  \end{itemize}
  Then $(K,v)$ is dp-minimal as a valued field, and the theory of
  $(K,v)$ is completely determined by the theories of $k$ and $\Gamma$
  (or $k$ and $(\Gamma,v(p))$ in mixed characteristic).
\end{theorem}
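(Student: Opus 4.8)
The plan is to run an Ax--Kochen--Ershov analysis and then a dp-rank computation, splitting into the three characteristic cases (equicharacteristic $0$, equicharacteristic $p$, and mixed characteristic). First I would establish that $(K,v)$ admits a good relative quantifier elimination down to the residue field $k$ and the value group $\Gamma$, so that the theory of $(K,v)$ is determined by the theory of $k$ and the theory of $\Gamma$ (together with the element $v(p) \in \Gamma$ in mixed characteristic). In equicharacteristic $0$ this is the classical Ax--Kochen--Ershov theorem: a henselian equicharacteristic-$0$ valued field is automatically defectless, and its theory is controlled by residue field and value group. In equicharacteristic $p$ the three hypotheses say exactly that $(K,v)$ is a \emph{tame} valued field in the sense of Kuhlmann---henselian, defectless, with perfect residue field (here even algebraically closed), and with $p$-divisible value group, since in this case the third hypothesis reads $\Gamma = p\Gamma$---and I would invoke Kuhlmann's Ax--Kochen--Ershov theorem for tame fields. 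In mixed characteristic the third hypothesis, $[-v(p),v(p)] \cap \Gamma \subseteq p\Gamma$, plays the role that $p$-divisibility plays in equicharacteristic $p$; one route is to coarsen $v$ by the smallest convex subgroup containing $v(p)$, reducing to an equicharacteristic-$0$ coarse field (classical AKE) sitting over a mixed-characteristic field whose value group satisfies the interval condition, for which the appropriate relative Ax--Kochen--Ershov principle holds with $v(p)$ as a distinguished constant.

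\textbf{dp-minimality of the pieces.} Next I would check that $k$ and $\Gamma$ are dp-minimal. The residue field $k$ is dp-minimal: if $k \models ACF_p$ it is strongly minimal, and if $k$ is elementarily equivalent to a characteristic-$0$ local field it is $p$-minimal (in the non-archimedean case) or o-minimal/strongly minimal (for $\Rr$, $\Cc$), so in every case dp-minimality follows from the implications recorded in the introduction. The value group $\Gamma$ is dp-minimal as an ordered abelian group: the hypothesis that $|\Gamma/n\Gamma|$ is finite for all $n$ places $\Gamma$ among the dp-minimal ordered abelian groups, which I would cite from the model theory of ordered abelian groups (or argue directly, exploiting that $\Gamma$ has only boundedly many relevant definable convex subgroups at each scale).

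\textbf{Assembling dp-minimality, and the main obstacle.} Finally, combining the relative quantifier elimination with dp-minimality of $k$ and $\Gamma$, I would show $\dpr(K,v) = 1$. Suppose not: then there are formulas $\phi(x;y)$, $\psi(x;z)$ with $|x|=1$ over the field sort and an array $(a_{ij})$, $(b_i)$, $(c_j)$ witnessing non-dp-minimality. Pushing the mutually indiscernible sequences $(b_i)$ and $(c_j)$ through the relative quantifier elimination, each instance of $\phi$ and $\psi$ becomes a Boolean combination of value-group conditions on polynomial terms in $x$ and residue-field conditions on angular-component-type terms in $x$. The key point---exactly the phenomenon that keeps $C$-minimal valued fields such as ACVF dp-minimal---is that the residue-field data carried by $x$ is only accessible \emph{relative to} its valuation data, so a depth-$2$ configuration on the single variable $x$ must collapse to a depth-$2$ configuration living entirely inside $\Gamma$ or entirely inside $k$, contradicting the dp-minimality of those structures. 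Making this collapse rigorous---tracking how the valuations and residues of the relevant terms vary along the indiscernible sequences---is the main obstacle, and I expect to handle it with the dp-rank machinery of \S\ref{sec:dpr}, in particular a subadditivity principle bounding $\dpr(K,v)$ in terms of the dp-ranks induced on the residue field and value group sorts.
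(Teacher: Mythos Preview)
Your completeness argument is broadly in line with the paper's: it too invokes Ax--Kochen--Ershov for the equicharacteristic-zero layer and decomposes the mixed-characteristic case via a convex subgroup (the paper coarsens by the largest $p$-divisible convex subgroup rather than the smallest one containing $v(p)$, but the spirit is the same). For equicharacteristic $p$ the paper develops its own relative quantifier elimination rather than citing Kuhlmann's tame-fields theorem, but your route via tame fields is a legitimate alternative.

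The dp-minimality half, however, has a real gap. The hoped-for ``subadditivity principle bounding $\dpr(K,v)$ in terms of the dp-ranks induced on the residue field and value group sorts'' is not what the subadditivity in \S\ref{sec:dpr} gives you: that is subadditivity of dp-rank along tuples over a base, not a transfer of dp-minimality across an interpretation. In equicharacteristic zero such a transfer exists (the paper mentions unpublished work of Chernikov and Simon), but the paper does \emph{not} argue this way in positive residue characteristic, and your ``collapse'' sketch does not supply a mechanism. What the paper actually does is: (i) obtain explicit cell-decomposition-style descriptions of definable subsets of $K^1$ as boolean combinations of Macintyre-type cosets and preimages of definable cuts in $\Gamma$; (ii) verify dp-minimality on the level of the ``cut'' language $L_0$ by reducing to ACVF or RCVF with all value-group cuts named; and (iii) run a hands-on pseudo-convergence argument---a failure of dp-minimality yields an indiscernible sequence of Macintyre-type cells with centers $c_i$ that are $L_0$-indiscernible over some $a$, one shows the $c_i$ pseudo-converge to $a$, and then full indiscernibility forces $v(a-c_i)-v(a-c_j)\in\bigcap_n n\Gamma$ (or the $rv_n$-analogue in the $p$-adic case), contradicting the assumption that $a$ distinguishes the cells. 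Step (iii) is precisely the ``making the collapse rigorous'' that you flag as the main obstacle, and it requires the concrete cell description from (i); a generic appeal to relative QE plus dp-minimality of $k$ and $\Gamma$ does not get you there.
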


The surprising result is that all pure dp-minimal fields arise this way.

\begin{theorem}
  \label{main-result}
  Let $K$ be a sufficiently saturated dp-minimal field.  Then there is
  \emph{some} valuation on $K$ satisfying the conditions of
  Theorem~\ref{thm-obnoxious}.
\end{theorem}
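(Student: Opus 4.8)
\emph{Proof plan.} If $K$ is algebraically closed, the trivial valuation already works: its residue field is $K$, which models $ACF_0$ or $ACF_p$, so in residue characteristic $0$ we have $K \equiv \Cc$, a local field of characteristic $0$, while the remaining hypotheses of Theorem~\ref{thm-obnoxious} concern a trivial value group and hold vacuously. Since a stable dp-minimal field is algebraically closed, we may assume from now on that $K$ is unstable, and the goal becomes to produce a definable henselian valuation on $K$.

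\emph{Step 1: a canonical definable V-topology (the main obstacle).} The heart of the argument is to show that an unstable dp-minimal field $K$ carries a canonical definable V-topology: a Hausdorff field topology with a $\vee$-definable neighbourhood basis at $0$ satisfying the V-topology axioms, hence induced by a valuation or an absolute value. Working in a monster model, I would isolate the type-definable subgroup $\mu$ of $(K,+)$ of ``infinitesimals'' --- defined invariantly by model-theoretic means, e.g.\ as a smallest type-definable subgroup of bounded index suitably closed under multiplication by units near $1$, obtained from a generic invariant type on $(K,+)$ --- and take the dilates $\{a\mu : a \in K^\times\}$, refined by their finite-index subgroups, as a neighbourhood basis at $0$. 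Checking the field-topology axioms, and above all the crucial V-topology axiom (every neighbourhood of $0$ contains a ``bounded'' neighbourhood $V$ with $V\cdot V$ arbitrarily small), is where dp-rank $1$ must be used essentially: one needs a Baire-category-type dichotomy (a definable subset of $K$, or its complement, contains a translate of $\mu$), together with generic continuity of definable subsets of $K^2$ and of definable functions $K\to K$; these facts pin down the topology and force the axioms. One also needs here that an unstable dp-minimal field is not ``trivially topologizable'' --- the only dp-minimal field with no nontrivial definable field topology turns out to be algebraically closed --- so that a nontrivial such $\mu$ exists. I expect this construction to be by far the hardest part; the rest is comparatively routine.

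\emph{Step 2: from the topology to a henselian valuation.} A V-topology is induced by an absolute value or by a valuation; the absolute-value case cannot occur on a sufficiently saturated field (such a field would embed into $\Cc$ and hence be small), so the canonical topology of $K$ is a valuation topology. From it one extracts a definable valuation ring $\mathcal{O}\subsetneq K$; the associated valuation $v$ is henselian, since the canonical topology is t-henselian and a Newton-iteration argument (equivalently, the known criterion for definable valuations on NIP fields) upgrades this to henselianity --- passing if necessary to the henselization, an immediate and therefore harmless extension. Defectlessness of $(K,v)$ will follow once the conditions of Step 3 are in place, via Kuhlmann's theory of tame and separably tame valued fields.

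\emph{Step 3: the three conditions.} Let $k$ and $\Gamma$ be the residue field and value group of the canonical valuation $v$; if $k$ is finite, replace $v$ by its minimal coarsening with infinite residue field (this may make $v$ trivial, e.g.\ when $K\equiv\Qq_p$). Now $k$ is a dp-minimal field with no proper definable valuation having infinite residue field. If $k$ carries no nontrivial definable valuation at all, then applying Steps 1--2 to $k$ shows $k$ is algebraically closed or real closed, so $k\models ACF_p$ in residue characteristic $p$ and $k\equiv\Cc$ or $k\equiv\Rr$ --- local fields of characteristic $0$ --- in residue characteristic $0$. Otherwise $k$ has a nontrivial definable valuation, which must have finite residue field; this forces $k$ to be of mixed characteristic, since a dp-minimal field of characteristic $p$ has no Artin--Schreier extension and hence no definable valuation with finite residue field, and a structural analysis of the resulting finitely ramified dp-minimal fields identifies $k$ as elementarily equivalent to a finite extension of $\Qq_p$, again a local field of characteristic $0$. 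For the value group, $\Gamma\cong K^\times/\mathcal{O}^\times$ is interpretable in $K$, hence a dp-minimal ordered abelian group, and such groups satisfy $|\Gamma/n\Gamma|<\infty$ for all $n$. Finally, in mixed characteristic one must verify that every element of $[-v(p),v(p)]$ lies in $p\Gamma$; this is the most delicate of the three conditions, and I would prove it by relating the canonical valuation of $K$ to its $p$-adic structure on that interval, in effect showing $v$ is ``roughly tame'' there. With all three conditions established, Kuhlmann's results yield defectlessness, so $v$ satisfies all requirements of Theorem~\ref{thm-obnoxious}.
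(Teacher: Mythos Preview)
Your overall architecture matches the paper: build a canonical V-topology from infinitesimals, extract a henselian valuation, then verify the hypotheses of Theorem~\ref{thm-obnoxious}. Steps~1--2 are broadly right (the paper's construction of infinitesimals is concrete, via the sets $X \mininf X$, rather than through $G^{00}$-style arguments, but the destination is the same). The real problems are in Step~3.

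\textbf{The gap.} You write: ``Now $k$ is a dp-minimal field with no proper definable valuation having infinite residue field.'' This does not follow from what you have done. The valuation you extract from the canonical V-topology is, in the paper's notation, (a relative of) $\mathcal{O}_K$, and this is the \emph{coarsest} thing around, not the finest: by Lemma~\ref{one-topology} every definable valuation on $K$ refines it, so its residue field can carry many further definable valuations with infinite residue field. Your recursive plan ``apply Steps~1--2 to $k$'' therefore does not terminate. The paper solves this by passing instead to $\mathcal{O}_\infty$, the intersection of \emph{all} definable valuation rings (Theorem~\ref{jk-app}), and then invoking the Jahnke--Koenigsmann definability of the canonical $p$-henselian valuation to prove that the residue field of $\mathcal{O}_\infty$ is finite, real closed, or algebraically closed. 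This Jahnke--Koenigsmann input is the missing idea; without it (or a substitute) your residue-field analysis does not get off the ground. Note also that $\mathcal{O}_\infty$ may be $K$ itself, and your ``extract a definable valuation ring'' in Step~2 is not guaranteed to succeed: there need not be any nontrivial \emph{definable} valuation ring, only the $\vee$-definable $\mathcal{O}_K$.

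\textbf{Defectlessness and rough $p$-divisibility.} Deferring these to ``Kuhlmann's results'' is too optimistic. In the mixed-characteristic $ACF_p$-residue case (Case~3 of the paper's proof) neither is automatic: the paper proves them by passing to the Shelah expansion, decomposing the place $K \to Kv_\infty$ along the convex subgroups $\Delta_0 \subset \Delta$ determined by $v(p)$, and applying the Kaplan--Scanlon--Wagner theorem that NIP fields of characteristic $p$ have no Artin--Schreier extensions (Lemma~\ref{temp-defect} and the end of the proof of Theorem~\ref{main-result}). In particular, rough $p$-divisibility comes from showing that the largest $p$-divisible convex subgroup $\Delta_p$ strictly contains $\Delta_0$, which uses that $\Delta_0$ is \emph{not} definable in $K$ while $\Delta_p$ is. Similarly, in the finite-residue case (Case~2), the identification of the residue field with a local field requires a spherical-completeness argument via countable chains of balls, not just a ``structural analysis.'' These are the genuinely delicate points; your plan does not yet engage with them.
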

This \emph{almost} says that all dp-minimal fields are elementarily
equivalent to ones of the form $K((t^\Gamma))$ where $K$ is
$\Ff_p^{alg}$ or a characteristic zero local field, and $\Gamma$
satisfies some divisibility conditions.  The one exceptional case is
the mixed characteristic case, which includes fields such as the
spherical completion of $\Zz_p^{un}(p^{1/p^\infty})$.

To obtain a very precise classification of pure dp-minimal fields, we
would need to determine which $(K,v)$ as in
Theorem~\ref{thm-obnoxious} are elementarily equivalent as pure
fields.  (For example, $\Cc((t^\Zz))$ is elementarily equivalent as a
pure field to $\Cc((t^{\Zz \times \Qq}))$, even though $\Zz$ and $\Zz
\times \Qq$ are not elementarily equivalent as ordered groups.)  We
will not do this here---though it seems highly likely that extraneous
factors of $\Qq$ are the only thing that can go wrong.

From the above results, we will obtain the following corollary in \S
\ref{sec:vc}.
\begin{theorem}
  \label{vc-minimal}
  Let $K$ be a VC-minimal field.  Then $K$ is algebraically closed or
  real closed.
\end{theorem}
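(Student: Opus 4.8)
The plan is to feed VC-minimality into the classification of dp-minimal fields. Since VC-minimal theories are dp-minimal and VC-minimality is an elementary property (the witnessing family $\mathcal{F}$ is ind-definable over $\emptyset$), we may pass to a sufficiently saturated $K$ and apply Theorem~\ref{main-result}: there is a valuation $v$ on $K$ as in Theorem~\ref{thm-obnoxious}, with residue field $k$ (where $k \models ACF_p$ or $k$ is elementarily equivalent to a characteristic-$0$ local field) and value group $\Gamma$ with $|\Gamma/n\Gamma| < \infty$ for all $n$. By Theorem~\ref{thm-obnoxious} together with standard Ax--Kochen--Ershov and henselianity arguments, if $\Gamma$ is divisible and $k \models ACF$ or $k \models RCF$, then $(K,v)$ is an algebraically closed valued field (in the equal-characteristic-$p$ and mixed-characteristic cases) or a real closed valued field, so in particular $K$ is algebraically or real closed. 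Thus it suffices to rule out, for VC-minimal $K$, the two bad possibilities: (a) $\Gamma$ is not divisible, and (b) $k$ is elementarily equivalent to a finite extension of $\Qq_p$ for some $p$.

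I would package both into the single invariant $[K^\times : (K^\times)^n]$. Each $(K^\times)^n$ is $\emptyset$-definable, and one checks, using $v$ and Hensel's lemma together with the explicit description of $k$ supplied by Theorem~\ref{main-result}, that $[K^\times : (K^\times)^n] \le 2$ for all $n \ge 1$ holds exactly when $\Gamma$ is divisible and $k \models ACF$ or $k \models RCF$ — i.e.\ exactly when $K$ itself is algebraically or real closed. The forward implication here is the routine part: a non-divisible $\Gamma$ has $[\Gamma:\ell\Gamma] \ge \ell \ge 2$ for some prime $\ell$, and in fact (replacing $\ell$ by a power if $\ell=2$, and using condition~3 of Theorem~\ref{thm-obnoxious} in equal characteristic $p$) one can force an index $\ge 3$, which is inherited by $[K^\times : (K^\times)^n]$ through the surjection $K^\times/(K^\times)^n \twoheadrightarrow \Gamma/n\Gamma$; likewise a $p$-adic residue field gives $[k^\times : (k^\times)^n] \ge 3$ for some $n$, inherited by $K$ through $v$ (or directly when $v$ is trivial). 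Hence Theorem~\ref{vc-minimal} reduces to the purely model-theoretic statement: if $K$ is a VC-minimal field then $[K^\times : (K^\times)^n] \le 2$ for every $n$.

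This last statement is the main obstacle, and it is where one must use the force of VC-minimality beyond dp-minimality. The idea is that an index $\ge 3$ at some $n$ yields, upon iterating with $(K^\times)^{n^k}$, a strictly descending chain of $\emptyset$-definable finite-index subgroups of $K^\times$ with index tending to infinity — equivalently, via $v$, a field-definable copy of an infinite discretely ordered set. One then argues this is incompatible with the forest structure: every unary definable set is a finite Boolean combination of sets from the pairwise-nested-or-disjoint family $\mathcal{F}$, so by compactness the cosets of $(K^\times)^{n^k}$ are \emph{uniformly} bounded Boolean combinations of $\mathcal{F}$-sets, i.e.\ have a swiss-cheese decomposition of uniformly bounded complexity, which cannot support an infinite refining sequence of definable partitions whose sizes grow without bound (nor a definable infinite discrete linear order). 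The delicate points are extracting the contradiction from the forest/swiss-cheese picture with the right uniformity, and handling parameters, since $\mathcal{F}$ is ind-definable over $\emptyset$ while the cosets and the induced order involve parameters; I expect this to be where most of the work lies, possibly by isolating first a clean lemma to the effect that a VC-minimal theory admits no definable infinite discrete linear order. Granting that lemma, Theorem~\ref{vc-minimal} follows at once, and as a sanity check it correctly identifies $\Qq_p$, $\Cc((t))$, and the mixed-characteristic examples of the introduction as non-VC-minimal while leaving available exactly $ACF$ and $RCF$ (e.g.\ $\Rr((t^{\Qq}))$).
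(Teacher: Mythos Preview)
Your reduction to the statement ``$[K^\times:(K^\times)^n]\le 2$ for all $n$'' is valid, but the argument you sketch for that statement is not a proof and the path you outline does not obviously close. The step ``via $v$, a field-definable copy of an infinite discretely ordered set'' fails as written: the valuation $v$ supplied by Theorem~\ref{main-result} need not be definable (in Cases~1 and~3 of that proof it is only type-definable), so you cannot pull the value group back to a definable linear order on $K$. The bare descending chain $(K^\times)^{n^k}$ gives only a refining sequence of finite partitions of $K^\times$, with no order in sight. And your compactness/uniform-swiss-cheese argument does not bound the number of cosets: a uniform bound on how many $\mathcal{F}$-sets are needed to express each coset does not bound how many cosets there are, since the particular $\mathcal{F}$-sets may vary freely from coset to coset. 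The clean lemma you hope for (``VC-minimal theories admit no definable infinite discrete linear order'') is neither stated precisely enough nor connected to the cosets $(K^\times)^{n^k}$ in a way that yields a contradiction.

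The paper's route is both shorter and avoids all of this. It passes through Guingona's intermediate notion of \emph{dp-smallness} (VC-minimal $\Rightarrow$ dp-small) and invokes his Theorem~1.6.4 from \cite{dpSmall}: in a dp-small structure, every \emph{definable} valuation has divisible value group. This is precisely the external input you are, in effect, trying to reprove from scratch via the forest structure. The paper then works not with the black-box $v$ of Theorem~\ref{main-result} but with $v_\infty$ from Theorem~\ref{jk-app}, whose valuation ring is the intersection of all definable valuation rings on $K$. That description immediately kills the finite-residue case (there $v_\infty$ is itself definable with non-divisible value group, contradicting Guingona), and in the remaining cases a one-line compactness argument transfers divisibility from each definable approximant to $v_\infty$ itself. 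Henselianity, defectlessness, and the residue-field dichotomy then finish. If you want to repair your approach, the cleanest fix is to cite Guingona's divisibility result for dp-small structures rather than attempt the discrete-order lemma, and to replace the black-box $v$ by $v_\infty$ so that the approximation by definable valuations becomes available.
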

As we will see, this holds if ``VC-minimal'' is replaced with
Guingona's notion of ``dp-small'' \cite{dpSmall}.

\subsection{Previous work on dp-minimal fields}
Dolich, Goodrick, and Lippel showed that $\Qq_p$ is dp-minimal
\cite{dpExamples}.  John Goodrick \cite{monotonicity} and Pierre Simon
\cite{dpOrdered} proved some results concerning divisible ordered
dp-minimal groups: Goodrick proved an analogue of the monotonicity
theorem for o-minimal structures, and Simon proved that infinite sets
have non-empty interior.  Building off their work, as well as
\cite{weakOmin}, Vince Guingona proved that VC-minimal ordered fields
are real closed \cite{dpSmall}.

Very recently, Walsberg, Jahnke, and Simon have classified dp-minimal
\emph{ordered} fields \cite{JSW}, among other things.  In fact, they
have independently obtained many of the results described below---they
essentially proved our main result Theorem~\ref{main-result}
\emph{modulo} an assumption, which is essentially Theorem~\ref{v-top}
below (see Propositions 7.4 and 8.1 in \cite{JSW}).

\subsection{Outline}
We will focus on Theorem~\ref{main-result}, the truly interesting
result.  Theorem~\ref{thm-obnoxious} is an exercise in quantifier
elimination, though we will include a proof sketch in
\S\ref{sec:obnoxious} below.

In \S \ref{sec:begin} through \S \ref{sec:end}, we will focus on
dp-minimal fields which are not strongly minimal.  The upshot of this
will be the fact that they admit t-henselian V-topologies (essentially
Theorem~\ref{hensel}).  In \S \ref{sec:jkapp} we will apply results of
Jahnke and Koenigsmann \cite{JK} to pick out the desired valuation.
Finally, in \S \ref{sec:ksw-app} we will obtain the divisibility and
defectlessness conditions using results of Kaplan-Scanlon-Wagner
\cite{NIPfields}.

\tableofcontents

\section{Background material}
We review some necessary background material on dp-rank
\S\ref{sec:dpr} and field topologies \S\ref{sec:filters}.
\subsection{Dp-rank}\label{sec:dpr}
If $X$ is a type-definable set and $\kappa$ is a cardinal, a
\emph{randomness pattern of depth $\kappa$ in $X$} is a collection of
formulas $\{\phi_\alpha(x;y_\alpha) : \alpha < \kappa \}$ and elements
$\{ b_{i,j} : i < \kappa, j < \omega\}$ such that for every function
$\eta : \kappa \to \omega$ there is some element $a_\eta$ in $X$ such
that for all $i,j$
\begin{equation*}
  j = \eta(i) \iff \phi_i(a_\eta,b_{ij})
\end{equation*}
The dp-rank of $X$ is defined to be the supremum of the cardinals
$\kappa$ such that there is a randomness pattern of depth $\kappa$ in
$X$.  This definition first appears in \cite{dpRank}.

The following fundamental facts about dp-rank are either easy, or
proven in \cite{dp-add}.
\begin{enumerate}
\item The formula $x = x$ has dp-rank less than $\infty$ if and only
  if the theory is NIP.
\item The formula $x = x$ has dp-rank 1 if and only if the theory is
  dp-minimal.
\item If $X$ is type-definable over $A$, then $\dpr(X)$ is the
  supremum of $\dpr(x/A)$ for $x \in X$.
\item $\dpr(X) > 0$ if and only if $X$ is infinite.
\item For $n < \omega$, $\dpr(a/A) \ge n$ if and only if there are
  sequences $I_1, \ldots, I_n$, which are mutually indiscernible over
  $A$, such that each sequence is not individually $Aa$-indiscernible.
\item Dp-rank is subadditive: $\dpr(ab/A) \le \dpr(a/bA) + \dpr(b/A)$.
\item If $X$ and $Y$ are non-empty type-definable sets, then $\dpr(X
  \times Y) = \dpr(X) + \dpr(Y)$.
\item If $\dpr(a/A) = n$ and $X$ is an $A$-definable set of dp-rank 1,
  then there is $b \in X$ such that $\dpr(ab/A) = n+1$.
\item If $X \twoheadrightarrow Y$ is a definable surjection, then
  $\dpr(Y) \le \dpr(X)$.
\end{enumerate}

Here are some basic uses of dp-rank:
\begin{observation}\label{perfect}
  Let $K$ be a field of finite dp-rank.  Then $K$ is perfect.
\end{observation}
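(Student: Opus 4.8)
The plan is to reduce to characteristic $p > 0$ and then use imperfection to produce a definable ``doubling'' of the additive group, which is incompatible with finite dp-rank. Every field of characteristic $0$ is perfect, and every finite field is perfect, so we may assume $K$ is an infinite field of characteristic $p > 0$; then $\dpr(K) \ge 1$ by fact (4) above. Suppose, toward a contradiction, that $K$ is not perfect, and fix $t \in K \setminus K^p$.

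The key step is to consider the map
\[
  f \colon K \times K \longrightarrow K, \qquad f(x,y) = x^p + t\, y^p,
\]
which is definable over the parameter $t$. Since $(a+b)^p = a^p + b^p$ in characteristic $p$, the map $f$ is a homomorphism of additive groups; and it is injective, because $x^p + t\,y^p = 0$ with $(x,y) \ne (0,0)$ would force $y \ne 0$ (otherwise $x = 0$ as well) and hence $t = -(x/y)^p \in K^p$, contradicting the choice of $t$. Thus $f$ is a definable isomorphism from $K \times K$ onto a definable subgroup of $(K,+)$.

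It then suffices to apply the dp-rank calculus recorded above. By fact (7), $\dpr(K \times K) = 2\,\dpr(K)$; and since dp-rank is invariant under definable bijections (immediate from fact (3), because $x$ and $f(x)$ are interdefinable over $t$) and monotone under inclusion, we obtain $2\,\dpr(K) = \dpr\bigl(f(K \times K)\bigr) \le \dpr(K)$. But $\dpr(K)$ is finite and at least $1$, so this is absurd. Hence $K = K^p$, i.e. $K$ is perfect.

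I do not expect a genuine obstacle: the argument is this short. The only two points needing attention are that the invoked dp-rank facts really do apply to honestly definable sets --- additivity over finite products and invariance under definable bijection are both available from the list above --- and that the hypothesis $\dpr(K) < \infty$ is used essentially, since it is precisely what converts ``$2\,\dpr(K) \le \dpr(K)$'' into a contradiction rather than the vacuous ``$\infty \le \infty$.'' (This is also why the same trick does not settle whether arbitrary NIP fields are perfect.)
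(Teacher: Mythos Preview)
Your proof is correct and is essentially the paper's own argument made explicit: the paper observes that imperfection makes $K$ a $K^p$-vector space of dimension $>1$ and hence that $K^p \times K^p$ definably injects into $K$, and your map $(x,y)\mapsto x^p + t y^p$ is precisely (a version of) that injection. The only cosmetic difference is that the paper phrases the conclusion as $\dpr(K)=0$ (hence $K$ finite, hence perfect) rather than separately handling the finite case up front.
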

\begin{proof}
  The field $K^p$ of $p$th powers is in definable bijection with $K$,
  so it has the same rank as $K$.  If $K$ is imperfect, then $K$ is a
  definable $K^p$ vector space of dimension greater than 1.  It
  contains a two-dimensional subspace, so $K^p \times K^p$ injects
  definably into $K$.  This shows
  \begin{equation*}
    \dpr(K) \ge 2 \cdot \dpr(K^p) = 2 \cdot \dpr(K)
  \end{equation*}
  So $\dpr(K) = 0$, and $K$ is finite.  Finite fields are perfect.
\end{proof}

\begin{observation}\label{infty-def}
  Let $K$ be dp-minimal field.  Then $K$ eliminates $\exists^\infty$
  (in powers of the home sort).
\end{observation}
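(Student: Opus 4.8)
The statement says precisely that $K$ is algebraically bounded: for each formula $\phi(x;\bar y)$ with $x$ a tuple of home-sort variables there is $N$ with $|\phi(K;\bar b)|\le N$ whenever $\phi(K;\bar b)$ is finite. The plan is to show that if this fails, then --- using the additive and multiplicative structure of the field --- one can convert a definable family of finite sets of unbounded size into a two-dimensional grid, producing inside $K$ an array of exactly the kind ruled out by the definition of dp-minimality. Work throughout in a monster model, and suppose toward a contradiction that $K$ fails to eliminate $\exists^\infty$, say for $\phi(x;\bar y)$ with $|x|=n$.

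First I would reduce to the case $|x|=1$. If $X\subseteq K^n$ is finite and the coordinates of $x$ are split as $(x',x'')$ with $|x'|=1$, then $|X|\le|\pi'(X)|\cdot|\pi''(X)|$, so at least one projection is finite of size $\ge\sqrt{|X|}$. Negating elimination of $\exists^\infty$ for $\phi$ gives parameters $\bar b_N$ with $N\le|\phi(K^n;\bar b_N)|<\infty$; applying the inequality to these, passing to a subsequence along which the same projection stays large, and replacing $\phi$ by $\exists x''\,\phi$ or by $\exists x'\,\phi$ accordingly, I reduce --- by induction on $n$ --- to a formula, again called $\phi(x;\bar y)$, with $|x|=1$ whose fibers $\phi(K;\bar b)$ are finite but of unbounded size.

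Next I would reparametrize: replace $\phi(x;\bar y)$ by $\phi(z_1^{-1}(x-z_0);\bar y)$ (interpreted as false when $z_1=0$), adjoining $z_0,z_1$ to the parameter tuple. The fibers of the new formula are the affine images $\lambda\cdot\phi(K;\bar b)+c$ of the old ones, so the new family is closed under affine maps of $K$ and still has finite fibers of unbounded size. Fix $\bar b_0$ with $D:=\phi(K;\bar b_0)$ finite of some large size $M$. Since $D-D$ is finite, I choose $\lambda\in K^\times$ avoiding the finitely many ratios $d'/d$ with $d\in(D-D)\setminus\{0\}$ and $d'\in D-D$; then $\lambda(D-D)\cap(D-D)=\{0\}$, and by affine-closure $D':=\lambda D$ is again a fiber, say $D'=\phi(K;\bar b')$. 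Enumerate $D=\{s_1,\dots,s_M\}$ and $D'=\{r_1,\dots,r_M\}$ and set $a_{ij}:=r_i+s_j$ for $1\le i,j\le M$; these $M^2$ elements are distinct, since $r_i+s_j=r_{i'}+s_{j'}$ would force $r_i-r_{i'}=s_{j'}-s_j\in(D'-D')\cap(D-D)=\{0\}$. Now put $\theta(x;(u,\bar v)):=\phi(x-u;\bar v)$. Then $\theta(a_{ij};(r_{i'},\bar b_0))$ asserts $(r_i-r_{i'})+s_j\in D$, which holds exactly when $i=i'$ (for $i\ne i'$ it would place $r_i-r_{i'}$ in $(D'-D')\cap(D-D)=\{0\}$), and symmetrically $\theta(a_{ij};(s_{j'},\bar b'))$ holds exactly when $j=j'$. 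Taking $\phi:=\psi:=\theta$, together with the $a_{ij}$ and the parameters $b_i:=(r_i,\bar b_0)$, $c_j:=(s_j,\bar b')$, yields an array as in the definition of non-dp-minimality, of size $M\times M$. Since $M$ can be taken arbitrarily large and the formula $\theta$ is fixed, compactness produces such an array of size $\omega\times\omega$, contradicting dp-minimality of $K$.

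The step I expect to be the real content --- and the only place the field structure is genuinely used --- is this last construction: dp-minimality alone does not give elimination of $\exists^\infty$ (a pure equivalence relation with exactly one class of each finite size is dp-minimal, yet the union of its finite classes is not definable), so the argument must exploit the field operations. The point requiring care is the choice of $\lambda$, whose only purpose is to force the two difference-sets $D-D$ and $D'-D'$ to intersect in $0$ alone, so that $D'+D$ really is an honest $M\times M$ grid on which the single formula $\theta$ reads off the two coordinates independently.
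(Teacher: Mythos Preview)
Your proof is correct. The paper's argument is shorter but leans on the dp-rank calculus developed just before: it observes that a definable $X \subseteq K$ is finite if and only if some $a \in K$ makes $(x,y) \mapsto x + a y$ injective on $X \times X$ (choosing $a$ outside the finite set of ratios $(x_1-x_2)/(x_3-x_4)$ when $X$ is finite, and using $\dpr(X \times X) = 2 > 1 = \dpr(K)$ to rule out any such injection when $X$ is infinite). This makes ``$X$ is finite'' a uniformly definable condition on the parameters, which is equivalent to elimination of $\exists^\infty$. Your argument rests on the very same affine-embedding trick---your choice of $\lambda$ is exactly the paper's choice of $a$---but instead of invoking additivity of dp-rank you work from the raw definition of dp-minimality: you use the injection $D' \times D \hookrightarrow K$ to write down an explicit $M \times M$ ict-pattern with a fixed formula $\theta$, and then let compactness push $M$ to $\omega$. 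In effect you have unwound the dp-rank inequality $\dpr(K) \ge \dpr(D' \times D)$ by hand, which makes your proof more self-contained (no appeal to subadditivity or to dp-rank of products) at the cost of some length; it also makes visible exactly where the forbidden pattern comes from. Your explicit reduction from $|x|=n$ to $|x|=1$ via projections is a routine step the paper leaves implicit.
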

\begin{proof}
  It suffices to show that a definable set $X \subset K$ is finite if
  and only if there is some $a \in K$ such that the map $(x,y) \mapsto
  x + a \cdot y$ is injective on $X \times X$.  If $X$ is finite, any
  $a$ outside the finite set
  \begin{equation*}
    \left\{ \frac{x_1 - x_2}{x_3 - x_4} : \vec{x} \in X^4\right\}
  \end{equation*}
  will work.  If $X$ is infinite, then $\dpr(X) \ge 1$, so $\dpr(X
  \times X) = 2$ and $X \times X$ cannot definably inject into $K$.
\end{proof}

This has the following useful corollary:
\begin{corollary}\label{external-infinite}
  Suppose $K$ is dp-minimal.  Then any infinite externally definable
  subset of $K$ contains an infinite internally definable set.
\end{corollary}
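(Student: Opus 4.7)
The plan is to combine Observation~\ref{infty-def} with the honest definitions theorem for NIP theories (Chernikov-Simon). Let $X \subseteq K$ be infinite externally definable, say $X = \phi(K;b)$ for some $b$ in an elementary extension $K^* \succeq K$.

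Since dp-minimality implies NIP, honest definitions apply to the formula $\phi(x;y)$: there is a formula $\psi(x;z)$, depending only on $\phi$, such that for every finite $A \subseteq K$ there exists $c \in K$ with
\[
A \cap X \; \subseteq \; \psi(K;c) \; \subseteq \; X.
\]
By Observation~\ref{infty-def} applied to $\psi$, there is a number $N$ such that any instance $\psi(K;c)$ of size greater than $N$ is infinite. The rest of the argument is immediate: pick any finite subset $A \subseteq X$ of size $N+1$, which is possible since $X$ is infinite, and apply honest definitions to produce $c \in K$ with $A \subseteq \psi(K;c) \subseteq X$. Then $\psi(K;c)$ has size $>N$, hence is infinite, and it is internally definable and contained in $X$, as required.

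The key obstacle is justifying honest definitions---this is the only nontrivial input. Without it, a direct attack using only elementarity gives a weaker conclusion: for any finite $\{a_1,\ldots,a_{N+1}\} \subseteq X$ one can find $c \in K$ with $\phi(K;c) \supseteq \{a_1,\ldots,a_{N+1}\}$, so that $\phi(K;c)$ is internally definable and infinite, but there is no a~priori reason such a set should be contained in $X$, or even meet $X$ in an infinite subset. It is exactly the NIP hypothesis that bridges this gap via honest definitions, so that is where the weight of the corollary lies.
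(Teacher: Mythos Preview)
Your proof is correct and is essentially identical to the paper's own argument: both invoke honest definitions (the paper cites \cite{NIPguide} Remark 3.14) to produce a single formula $\psi(x;z)$ approximating $X$ from inside, then use elimination of $\exists^\infty$ (Observation~\ref{infty-def}) to find a uniform bound $N$, and finally choose a finite subset of $X$ of size exceeding $N$. Your closing paragraph explaining why honest definitions are indispensable is a nice addition, but the core argument matches the paper step for step.
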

\begin{proof}
  Suppose $S \subset K$ is externally definable.  By honest
  definitions (\cite{NIPguide} Remark 3.14), there is some formula
  $\phi(x;y)$ such that for every finite $S_0 \subset S$, there is $b
  \in K$ such that $S_0 \subset \phi(K;b) \subset S$.  By elimination
  of $\exists^\infty$, there is some number $n$ such that $\phi(K;b)$
  is infinite or has size less than $n$.  If we choose $S_0$ to have
  size greater than $n$, then $\phi(K;b)$ will be our desired infinite
  internally-definable set.
\end{proof}

\subsection{Filters and topologies} \label{sec:filters}
Let $K$ be a field, and $\tau$ be a family of subsets of $K$ that is
\emph{filtered}, in the sense that $\forall U, V \in \tau ~ \exists W
\in \tau : W \subset U \cap V$. Consider the following conditions on
$\tau$, lifted straight out of \cite{prestel-ziegler}.  Here,
set-quantifiers range over $\tau$ and element-quantifiers range over
$K$:
\begin{enumerate}
\item \label{f1} $\forall U : \{0\} \subsetneq U$
\item \label{f2} $\forall x \ne 0 ~ \exists U : x \notin U$
\item \label{f3} $\forall U ~ \exists V : V - V \subset U$
\item \label{f4} $\forall U , x ~ \exists V : x \cdot V \subseteq U$
\item \label{f5} $\forall U ~ \exists V : V \cdot V \subseteq U$
\item \label{f6} $\forall U ~ \exists V : (1+V)^{-1} \subseteq (1 + U)$
\item \label{f7} $\forall U ~ \exists V \forall x, y : (x \cdot y \in
  V \implies (x \in U \vee y \in U))$
\end{enumerate}
Then $\tau$ is a neighborhood basis for a Hausdorff non-discrete group
topology on $(K,+)$ if and only if conditions 1-3 hold, and the
topology is uniquely determined in this case.  The topology is a ring
topology if and only if conditions 1-5 hold, a field topology if and
only if 1-6 hold, and a V-topology if and only if 1-7 hold.

If $\tau$ and $\tau'$ are two different filtered families on $K$
satisfying 1-3, then $\tau$ and $\tau'$ will define the same topology
if and only if the following two conditions hold:
\begin{itemize}
\item For all $U \in \tau$ there is $V \in \tau'$ such that $V \subseteq U$.
\item For all $U \in \tau'$ there is $V \in \tau$ such that $V
  \subseteq U$.
\end{itemize}

Now suppose that $K$ is a field with some additional structure, and
the sets in $\tau$ are all definable.  In a saturated elementary
extension $\Cc \succeq K$, the intersection of the sets in $\tau$ is a
set $I_\tau \subset \Cc$ that is type-definable over $K$.  The
conditions above all translate into conditions on $I_\tau$:
\begin{enumerate}
\item \label{t1} $I_\tau \supsetneq \{0\}$
\item \label{t2} $I_\tau$ does not intersect $K^\times$.
\item \label{t3} $I_\tau$ is a subgroup of $(\Cc,+)$
\item \label{t4} $I_\tau$ is closed under multiplication by elements of $K$
\item \label{t5} $I_\tau$ is closed under multiplication
\item \label{t6} $(1 + I_\tau)^{-1} = (1 + I_\tau)$
\item \label{t7} $\Cc \setminus I_\tau$ is closed under multiplication
\end{enumerate}
respectively.  Moreover, $\tau$ and $\tau'$ induce the same topology
if and only if $I_\tau = I_{\tau'}$.  So we have an injective map from
type-definable sets over $K$ satisfying conditions 1-3, to topologies
on $K$.

We will use the following two observations later.
\begin{observation}\label{condition-7}
  Conditions 1-5 and 7 together imply condition 6, and that $I_\tau$
  is the maximal ideal of a valuation ring $\mathcal{O}$ containing $K$.
\end{observation}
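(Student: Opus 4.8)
The plan is to produce the valuation ring by hand as the multiplicative stabilizer of $I_\tau$, namely
\[
  \mathcal{O} := \{ x \in \Cc : x \cdot I_\tau \subseteq I_\tau \},
\]
and then to identify $I_\tau$ with its maximal ideal; condition 6 will fall out formally once this is done. Note that I make no definability claim about $\mathcal{O}$: the statement to be proved is purely about the algebraic structure of the (type-definable) set $I_\tau$, and saturation of $\Cc$ plays no role.

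First I would dispose of the bookkeeping, none of which uses condition 7. Condition 3 shows $\mathcal{O}$ is closed under addition; condition 5 shows it is closed under multiplication and, simultaneously, that $I_\tau \subseteq \mathcal{O}$; since $1 \in \mathcal{O}$ trivially, $\mathcal{O}$ is a subring of $\Cc$. Condition 4 gives $K \subseteq \mathcal{O}$, and condition 1 gives $0 \in I_\tau$.

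The one genuinely non-formal step — and the place where the V-topology axiom rather than the mere field-topology axioms is doing work — is to deduce from condition 7 that for every $x \in \Cc \setminus I_\tau$ one has $x^{-1} \in \mathcal{O}$. Suppose not: then there is $z \in I_\tau$ with $x^{-1} z \notin I_\tau$, so $x$ and $x^{-1} z$ both lie in $\Cc \setminus I_\tau$, which is multiplicatively closed by condition 7; hence their product $z = x \cdot (x^{-1} z)$ lies in $\Cc \setminus I_\tau$, contradicting $z \in I_\tau$.

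Granting this, the rest is quick. Any $x \in \Cc^\times$ satisfies $x \in \mathcal{O}$ or $x^{-1} \in \mathcal{O}$ — if $x \in I_\tau$ use $I_\tau \subseteq \mathcal{O}$, otherwise use the previous step — so $\mathcal{O}$ is a valuation ring of $\Cc$ with maximal ideal $\mathfrak{m} = \mathcal{O} \setminus \mathcal{O}^\times$. Then $\mathfrak{m} = I_\tau$: for $I_\tau \subseteq \mathfrak{m}$, if $0 \neq x \in I_\tau$ had $x^{-1} \in \mathcal{O}$ then $1 = x \cdot x^{-1} \in I_\tau$, contradicting condition 2; for $\mathfrak{m} \subseteq I_\tau$, if $x \in \mathfrak{m} \setminus I_\tau$ then $x \neq 0$ and $x^{-1} \in \mathcal{O}$ by the key step, making $x$ a unit of $\mathcal{O}$, a contradiction. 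Finally, since $1 + I_\tau = 1 + \mathfrak{m} \subseteq \mathcal{O}^\times$, for $y \in I_\tau$ the element $v := (1+y)^{-1}$ satisfies $v = 1 - yv \in 1 + \mathcal{O} \cdot \mathfrak{m} \subseteq 1 + I_\tau$, so $(1+I_\tau)^{-1} \subseteq 1 + I_\tau$; applying this to $v$ (or taking inverses of both sides) gives the reverse inclusion, which is condition 6. I do not anticipate any real obstacle beyond choosing the right $\mathcal{O}$ and isolating the one-line argument from condition 7; everything else is a routine case analysis on whether a given element lies in $I_\tau$.
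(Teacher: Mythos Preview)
Your proposal is correct and follows essentially the same route as the paper: both define $\mathcal{O}$ as the multiplicative stabilizer of $I_\tau$, use condition~7 to show $x \notin I_\tau \implies x^{-1} \in \mathcal{O}$, and deduce that $\mathcal{O}$ is a valuation ring with maximal ideal $I_\tau$. The only difference is that you spell out the verification of $\mathfrak{m} = I_\tau$ and of condition~6 explicitly, whereas the paper simply invokes ``general facts about valuation rings'' for the latter.
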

\begin{proof}
  Suppose conditions 1-5 and 7 hold.  Let $\mathcal{O}$ be the set of
  $x \in \Cc$ such that $x \cdot I_\tau \subseteq I_\tau$.  This is
  obviously closed under multiplication, and is closed under addition
  and subtraction by condition 3.  So it is a subring of $\Cc$.  It
  contains $I_\tau$ by condition 5, and so $I_\tau$ is an ideal in
  $\mathcal{O}$.  It is a proper ideal by condition 2.  Also,
  $\mathcal{O}$ contains $K$ by condition 4.

  If $x \in \Cc \setminus I_\tau$, then multiplication by $x$
  preserves the complement of $I_\tau$, so division by $x$ preserves
  $I_\tau$.  Thus
  \begin{equation*}
    x \notin I_\tau \implies 1/x \in \mathcal{O}
  \end{equation*}
  Equivalently, $1/x \notin \mathcal{O} \implies x \in I_\tau$.  This
  ensures that $\mathcal{O}$ is a valuation ring with maximal ideal
  $I_\tau$.

  Now it is a general fact that if $\mathfrak{m}$ is the maximal ideal
  of a valuation ring, then $(1+\mathfrak{m})^{-1} = 1 +
  \mathfrak{m}$, so condition 6 holds.
\end{proof}

\begin{observation}\label{funny-comparison}
  Suppose $\tau$ and $\tau'$ satisfy conditions 1-4 and that $a \cdot
  I_\tau \subseteq I_{\tau'}$ for some $a \in \Cc^\times$.  Then
  $I_\tau \subseteq I_{\tau'}$.
\end{observation}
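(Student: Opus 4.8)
The plan is to reduce everything to the trivial case $a\in K^\times$, where the conclusion is immediate from condition 4, using compactness together with the elementarity of $K\preceq\Cc$.

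First I would fix an arbitrary $V'\in\tau'$; it is enough to find \emph{some} $U\in\tau$ with $U\subseteq V'$, since doing this for every $V'$ yields $I_\tau\subseteq\bigcap_{V'\in\tau'}V'=I_{\tau'}$. Now $aI_\tau\subseteq I_{\tau'}\subseteq V'$, so the partial type $\pi(x)$ over $K$ defining $I_\tau$ satisfies $\pi(x)\vdash ax\in V'$. By compactness finitely many members of $\tau$ already force $ax\in V'$, and since $\tau$ is filtered there is a single $U_0\in\tau$ with $aU_0\subseteq V'$. (This is just the translation between conditions on $\tau$ and conditions on $I_\tau$ already used above.)

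Now comes the one real step. The sentence $\exists x\,(x\neq 0\wedge xU_0\subseteq V')$ is first-order with parameters in $K$, because $U_0$ and $V'$ are $K$-definable, and it holds in $\Cc$ as witnessed by $a\in\Cc^\times$. Since $K\preceq\Cc$, it holds in $K$: there is $k\in K^\times$ with $kU_0\subseteq V'$. Because $I_\tau\subseteq U_0$ we get $kI_\tau\subseteq kU_0\subseteq V'$, and condition 4 applied to the units $k$ and $k^{-1}$ gives $kI_\tau=I_\tau$. Hence $I_\tau\subseteq V'$, and letting $V'$ vary completes the argument.

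The ``funny'' point, and the only place any work happens, is trading the nonstandard element $a$ for an honest scalar $k\in K^\times$ via elementarity — this is exactly why the hypothesis need only assert that such an $a$ exists somewhere in $\Cc$ rather than in $K$. I expect no real obstacle here: the initial compactness reduction is routine, and everything after producing $k$ is formal.
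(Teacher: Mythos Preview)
Your argument is correct and is essentially the paper's own proof: fix a set in $\tau'$, use compactness and filteredness to find a single $U_0\in\tau$ with $a\cdot U_0$ inside it, then trade $a$ for some $k\in K^\times$ via $K\preceq\Cc$ and use condition~4 to conclude $I_\tau=k\cdot I_\tau\subseteq V'$. The only cosmetic discrepancy is that your opening line promises a $U\in\tau$ with $U\subseteq V'$, whereas what you (and the paper) actually establish is $I_\tau\subseteq V'$ directly; this does not affect correctness.
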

\begin{proof}
  For $U \in \tau'$, we have $a \cdot I_\tau \subseteq I_{\tau'}
  \subseteq U$.  By compactness, there is some $V \in \tau$ such that
  $a \cdot V \subseteq U$.  As $K \preceq \Cc$, there is some $a' \in
  K$ such that $a' \cdot V \subseteq U$.  By condition 4 on $\tau$,
  \begin{equation*}
    I_{\tau} = a' \cdot I_{\tau} \subseteq a' \cdot V \subseteq U
  \end{equation*}
  As $U$ was arbitrary, $I_\tau \subseteq I_{\tau'}$.
\end{proof}

\section{Infinitesimals} \label{sec:begin}
Until \S \ref{sec:six}, let $\Cc$ be a fairly saturated dp-minimal
field \textbf{that is not strongly minimal}.

If $X, Y \subset \Cc$, let $X \mininf Y$ denote
\begin{equation*}
  \{c \in \Cc : \exists^\infty y \in Y : c+y \in X\}
\end{equation*}
This is a subset of $X - Y$.  It is definable if $X$ and $Y$ are, by
Observation~\ref{infty-def}.

\begin{lemma}
  If $X$ and $Y$ are infinite, so is $X \mininf Y$.
\end{lemma}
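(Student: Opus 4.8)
The plan is a short dp-rank computation resting on subadditivity (item 6 of \S\ref{sec:dpr}) together with the fact that an infinite definable subset of $\Cc$ has dp-rank exactly $1$ while $\Cc^2$ has dp-rank $2$. Fix a small set $A$ over which $X$ and $Y$ are defined; by Observation~\ref{infty-def} the set $X \mininf Y$ is then $A$-definable, and since any finite $A$-definable set is contained in $\acl(A)$, it suffices to exhibit a single element $c \in X \mininf Y$ with $\dpr(c/A) = 1$ (so in particular $c \notin \acl(A)$).

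First I would produce such a $c$. As $X$ is infinite and $\Cc$ is dp-minimal, $\dpr(X) = 1$, so by item 3 of \S\ref{sec:dpr} there is $a \in X$ with $\dpr(a/A) = 1$; then item 8 of \S\ref{sec:dpr}, applied to the dp-rank-$1$ $A$-definable set $Y$, produces $b \in Y$ with $\dpr(ab/A) = 2$. Set $c := a - b$. Since $a = b + c$, the tuples $ab$ and $bc$ are interdefinable over $A$, hence have the same dp-rank over $A$, and subadditivity gives
\[
2 = \dpr(ab/A) = \dpr(bc/A) \le \dpr(b/Ac) + \dpr(c/A) \le 1 + 1,
\]
the last inequality by dp-minimality. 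So equality holds throughout and $\dpr(b/Ac) = \dpr(c/A) = 1$.

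It then remains to check that this $c$ really lies in $X \mininf Y$. The set $S := \{ y \in Y : c + y \in X \}$ is $Ac$-definable and contains $b$, because $b \in Y$ and $c + b = a \in X$; if $S$ were finite it would be contained in $\acl(Ac)$, whence $b \in \acl(Ac)$ and $\dpr(b/Ac) = 0$, contradicting the previous paragraph. Therefore $S$ is infinite, which is exactly the statement $c \in X \mininf Y$. Since $\dpr(c/A) = 1 > 0$ we have $c \notin \acl(A)$, so the $A$-definable set $X \mininf Y$ cannot be finite.

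I do not expect a genuine obstacle here: the argument is just a handful of invocations of the dp-rank toolkit from \S\ref{sec:dpr}. The only point that needs a moment's thought is the last step — noticing that "$c$ lies in $X \mininf Y$" is literally the assertion that the witness $b$ is \emph{not} algebraic over $Ac$, which is what ties the combinatorial definition of $\mininf$ to the rank inequalities. (One could equally run the whole thing by contradiction: if $X \mininf Y$ were finite then $c \notin X \mininf Y$, so $S$ is finite, so $\dpr(b/Ac) = 0$, whence $\dpr(ab/A) \le \dpr(c/A) + \dpr(b/Ac) \le 1$, contradicting $\dpr(ab/A) = 2$.)
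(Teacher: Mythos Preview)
Your proof is correct and is essentially the same argument as the paper's: both pick a point $(a,b)\in X\times Y$ of dp-rank $2$ over $A$, set $c=a-b$, use subadditivity to force $\dpr(b/Ac)=\dpr(c/A)=1$, and then read off that $c\in X\mininf Y$ and $c\notin\acl(A)$. The only cosmetic difference is that you invoke item~8 of \S\ref{sec:dpr} to manufacture the rank-$2$ pair, while the paper just cites $\dpr(X\times Y)=2$ directly.
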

\begin{proof}
  Suppose $X$ and $Y$ are $A$-definable.  Take $(x,y) \in X \times Y$
  of dp-rank 2 over $A$, and let $c = x - y$.  By subadditivity of
  dp-rank, and dp-minimality,
  \begin{equation*}
    2 = \dpr(x,y/A) = \dpr(y,c/A) \le \dpr(y/c,A) + \dpr(c/A) \le 1 +
    1
  \end{equation*}
  Equality must hold, so $y \notin \acl(Ac)$ and $c \notin \acl(A)$.
  As $y \in Y \cap (X - c)$, the $Ac$-definable set $Y \cap (X - c)$
  is infinite.  Then $c \in X \mininf Y$, so the $A$-definable set $X
  \mininf Y$ is infinite.
\end{proof}

\begin{proposition} \label{basic-infs}
  Let $K \preceq \Cc$ be a small model.  Let
  \begin{equation*}
    \tau = \{X \mininf X : X \subset K \text{ is infinite and $K$-definable}\}
  \end{equation*}
  Then $\tau$ is a filtered family on $K$ and it satisfies conditions
  \ref{f1}, \ref{f2}, \ref{f4} of \S\ref{sec:filters}:
  \begin{enumerate}
  \item $\forall U \in \tau : \{0\} \subsetneq U$
  \item $\forall x \ne 0, ~ \exists U \in \tau : x \notin U$
    \setcounter{enumi}{3}
  \item $\forall U \in \tau,~ \forall x, ~ \exists V \in \tau : x
    \cdot V \subseteq U$
  \end{enumerate}
\end{proposition}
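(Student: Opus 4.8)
My plan is to get conditions 1 and 4 and filteredness from formal properties of $\mininf$, and to treat condition 2 as the real content. Two such properties, both immediate from the definition: for $a\in K^\times$ the bijection $y\mapsto ay$ of $K$ gives the scaling identity $a\cdot(Z\mininf Z)=(aZ)\mininf(aZ)$, and the bijection $y\mapsto y+t$ gives $(Y+t)\mininf(Y+t)=Y\mininf Y$; also $Z\subseteq X$ forces $Z\mininf Z\subseteq X\mininf X$, and $X\mininf X\subseteq K$ when $X\subseteq K$ (a $c$ with some $y\in X$ and $c+y\in X$ is already in $K$). Condition 1 follows at once: $0\in X\mininf X$ since $X$ is infinite, and $X\mininf X$ is infinite by the preceding Lemma, so $\{0\}\subsetneq X\mininf X$. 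For condition 4, given $U=X\mininf X$ and $x\in K$: if $x=0$ we only need $0\in U$; if $x\ne 0$, take $V=(x^{-1}X)\mininf(x^{-1}X)\in\tau$ and use the scaling identity to get $x\cdot V=X\mininf X=U$.

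For filteredness, given infinite $K$-definable $X,Y\subseteq K$, I would find $t\in K$ with $X\cap(Y+t)$ infinite and put $Z=X\cap(Y+t)$; then $Z\mininf Z\subseteq X\mininf X$ (as $Z\subseteq X$) and $Z\mininf Z\subseteq(Y+t)\mininf(Y+t)=Y\mininf Y$, so $Z\mininf Z\subseteq(X\mininf X)\cap(Y\mininf Y)$. To produce $t$: since $\dpr(X)=1$, pick $u\in X$ with $\dpr(u/K)=1$; since $Y$ has dp-rank $1$, pick $v\in Y$ with $\dpr(uv/K)=2$; set $t=u-v$. As $(u,v)$ and $(u,t)$ are interdefinable over $K$ we get $\dpr(ut/K)=2$, so subadditivity gives $\dpr(u/Kt)\ge 1$, i.e.\ $u\notin\acl(Kt)$; since $u\in X\cap(Y+t)$, the $Kt$-definable set $X\cap(Y+t)$ is infinite, and then by elementarity (and elimination of $\exists^\infty$) some $t\in K$ has $X\cap(Y+t)$ infinite as well.

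Condition 2 is the heart of the matter. Using the scaling identity, it is equivalent to: \emph{some infinite $K$-definable $X\subseteq K$ satisfies $X\mininf X\ne K$} — i.e.\ $X\cap(X-c)$ is finite for some $c\ne 0$ — since from such $(X,c)$ the set $(x/c)\cdot X$ witnesses $x\notin\big((x/c)X\big)\mininf\big((x/c)X\big)$ for each $x\ne 0$, and $0\in X\mininf X$ always. I would prove this by contradiction: suppose every infinite $K$-definable $X$ has $X\cap(X-c)$ infinite for every $c$. An easy induction upgrades this to ``$X\cap(X-c_1)\cap\dots\cap(X-c_n)$ is infinite for all $c_1,\dots,c_n\in K$''; applying it to the infinite, co-infinite definable set $D$ furnished by non-strong-minimality and to $D^c$, and compactifying, one obtains for any small parameter set $S$ elements $a,a'$ with $a+S\subseteq D$ and $a'+S\subseteq D^c$. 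The goal is then to convert these ``large monochromatic shift-patterns'' into a violation of dp-minimality: construct sequences $I\subseteq D$ and $J\subseteq D^c$ — so that $D$ itself keeps them apart — that are mutually indiscernible over some base $A$ and share a common nonzero difference $g\in(I-I)\cap(J-J)$; then neither $I$ nor $J$ is $Ag$-indiscernible, so $\dpr(g/A)\ge 2$, contradicting dp-minimality ($g$ being a single element of the home sort).

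The \textbf{main obstacle} is exactly this last construction: making a sequence drawn from $D$ and one drawn from $D^c$ share a difference. The monochromatic patterns above do not obviously produce the ``mixed'' colourings one would want, and naive attempts make $I$ and $J$ translates of one another by an element of the base, which destroys mutual indiscernibility. I expect the genuine argument must use the field structure — the interplay of additive translations with multiplicative rescalings — and not the additive group alone; this is unavoidable, since condition 2 fails outright for strongly minimal (hence algebraically closed) fields, where every infinite definable set is cofinite and so $X\mininf X=K$ for all infinite $X$. One would also have to treat positive characteristic separately, where $(K,+)$ is $\Ff_p$-linear and genuine arithmetic progressions are finite.
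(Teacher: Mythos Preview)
Your treatment of condition~1, condition~4, and filteredness is fine and matches the paper's approach (you re-prove the Lemma inline for filteredness rather than citing it, but that is harmless).

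The gap is in condition~2. You correctly reduce to finding a single infinite $K$-definable $X$ with $X\mininf X\ne K$, but then launch into an elaborate contradiction argument involving mutually indiscernible sequences which, as you yourself note, does not close. None of that machinery is needed. The paper's argument is a two-line trick that you missed: apply the preceding Lemma not to $D\mininf D$ but to $D\mininf D^c$, with \emph{distinct} sets. Since $D$ and $D^c$ are both infinite and $K$-definable, the Lemma gives that $D\mininf D^c$ is infinite; being $K$-definable it contains some $c\in K$ (necessarily $c\ne 0$, since $D\cap D^c=\emptyset$). Now set $X:=D\cap(D^c+c)$, which is infinite by choice of $c$ and $K$-definable. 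Then $X\subseteq D$ while $X-c\subseteq D^c$, so $X\cap(X-c)=\emptyset$, and in particular $c\notin X\mininf X$.

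The point you were circling around --- that one needs ``mixed'' behaviour between $D$ and $D^c$ --- is exactly right, but the way to get it is to feed $D$ and $D^c$ into the $\mininf$ operator together, not to try to force two separate monochromatic sequences to interact after the fact. Your worry about positive characteristic and the necessity of multiplicative structure is misplaced here: the argument is purely additive (translations only), and the field structure enters condition~2 only through the scaling reduction you already did.
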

\begin{proof}
  To see $\tau$ is filtered, suppose $X$ and $Y$ are infinite sets.
  As $X \mininf Y$ is non-empty, there is some translate $Y'$ of $Y$
  such that $X \cap Y'$ is infinite.  Then $Y' \mininf Y' = Y \mininf
  Y$ and
  \begin{equation*}
    (X \cap Y') \mininf (X \cap Y') \subseteq (X \mininf X) \cap (Y'
    \mininf Y') = (X \mininf X) \cap (Y \mininf Y)
  \end{equation*}
  so $\tau$ is filtered.

  Condition 1 follows because $0 \in X \mininf X$ for any infinite
  $X$, and $X \mininf X$ is infinite by the lemma.

  Something slightly stronger than condition 4 is true: if $U \in
  \tau$, and $a \in K^\times$, then $a \cdot U \in \tau$.  This
  follows from the identity:
  \begin{equation*}
    (a \cdot X) \mininf (a \cdot X) = a \cdot (X \mininf X)
  \end{equation*}

  In light of this, condition 2 reduces to showing that $X \mininf X
  \ne K$ for some infinite $X$.  By failure of strong minimality and
  Observation\ref{infty-def}, there is a $K$-definable set $D$ which
  is infinite and co-infinite.  Let $D'$ be the complement of $D$.  By
  the Lemma, $D \mininf D'$ is non-empty, so there is some $c$ such
  that $X := D \cap (D' + c)$ is infinite.  Then $X - c \subseteq D'$
  so $(X - c) \cap X = \emptyset$, and $c \notin X \mininf X$.
\end{proof}

We'll denote the corresponding type-definable set by $I_K$, and refer
to elements as \emph{$K$-infinitesimals}.  So $\epsilon \in \Cc$ is
$K$-infinitesimal if and only if $\epsilon \in X \mininf X$ for every
infinite $K$-definable set $X \subset \Cc$.  Equivalently, $X \cap (X
- \epsilon)$ is infinite for every infinite $K$-definable set $X$.

Conditions 1, 2, and 4 of Proposition~\ref{basic-infs} translate into
the following facts: $0 \subsetneq I_K$, $I_K \cap K = \{0\}$, and
$I_K$ is closed under multiplication by $K$.

\subsection{Sums of infinitesimals}
In this section we show that $I_K$ is closed under addition and
subtraction, (condition \ref{t3} of \S \ref{sec:filters}), which
ensures that $\tau$ is a neighborhood basis of a group topology on the
additive group.

\begin{definition}
  Let $K$ be a small model.  A $\Cc$-definable bijection $f : \Cc \to
  \Cc$ is \emph{$K$-slight} if $X \cap f^{-1}(X)$ is infinite for
  every $K$-definable infinite set $X$.
\end{definition}
For example, the translation map $x \mapsto x + \epsilon$ is
$K$-slight if and only if $\epsilon$ is a $K$-infinitesimal.

The main goal here is to show that $K$-slight maps form a group under
composition.

\begin{definition}
  Let $K$ be a small model, and $X \subset \Cc$ be $K$-definable.  Say
  that a $\Cc$-definable bijection $f$ ``\emph{$K$-displaces $X$}'' if
  $X(K) \cap f^{-1}(X)$ is empty.
\end{definition}

\begin{lemma}\label{heir}
  Suppose $K' \succeq K$ and $f'$ and $f$ are $\Cc$-definable
  bijections such that $\tp(f'/K')$ is an heir of $\tp(f/K)$.  (Here,
  we are identifying a bijection with its code.)
  \begin{itemize}
  \item If $f$ is $K$-slight, then $f'$ is $K$-slight.
  \item If $X$ is $K$-displaced by $f$, then $X$ is $K'$-displaced by
    $f'$.
  \end{itemize}
\end{lemma}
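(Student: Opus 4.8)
The plan is to unwind both statements into first-order (or small-type) conditions on $\tp(f/K)$ and then transfer them via the heir property. Recall that $\tp(f'/K')$ being an heir of $\tp(f/K)$ means: for every $K$-formula $\theta(u;w)$ and every tuple $d$ from $K'$, if $\theta(f';d)$ holds then there is a tuple $d'$ from $K$ with $\theta(f;d')$. Equivalently (the form I will actually use), anything true of $f'$ over $K'$ that is ``witnessed by'' parameters in $K$ was already true of $f$ over $K$ with those same parameters.

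For the first bullet, suppose $f$ is $K$-slight but $f'$ is not. Then there is an infinite $K'$-definable set $X'$, say $X' = \psi(\Cc;e)$ with $\psi$ over $\emptyset$ and $e$ from $K'$, such that $X' \cap (f')^{-1}(X')$ is finite. Since $\Cc$ is dp-minimal, by Observation~\ref{infty-def} the property ``$\psi(\Cc;w)$ is infinite'' and the property ``$\psi(\Cc;w) \cap g^{-1}(\psi(\Cc;w))$ has size $< n$'' are both definable in $w$ (uniformly in a parameter coding the bijection $g$). So there is a single $K$-formula $\theta(g;w)$ expressing ``$\psi(\Cc;w)$ is infinite and $\psi(\Cc;w) \cap g^{-1}(\psi(\Cc;w))$ is finite'', and $\theta(f';e)$ holds. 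By the heir property there is $e'$ from $K$ with $\theta(f;e')$; but then $X := \psi(\Cc;e')$ is an infinite $K$-definable set with $X \cap f^{-1}(X)$ finite, contradicting $K$-slightness of $f$. (One must be slightly careful that $X$ is literally $K$-definable, not merely $\Cc$-definable with code in $K$; but $e'$ lies in $K$, and $\psi$ is over $\emptyset$, so $X$ is $K$-definable.)

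For the second bullet the argument is the same shape but easier, since ``$X(K)$ is empty'' does not require elimination of $\exists^\infty$. Suppose $X$ is $K$-displaced by $f$, i.e.\ $X(K) \cap f^{-1}(X) = \emptyset$, and suppose toward a contradiction that $X(K') \cap (f')^{-1}(X) \neq \emptyset$, witnessed by some $a \in K'$. Writing $X = \chi(\Cc;c)$ with $c$ from $K$ and $\chi$ over $\emptyset$, the statement ``$\exists a\ (\chi(a;c) \wedge \chi(g(a);c))$'' is a $K$-formula $\theta(g)$ (its only parameters are $c \in K$), and $\theta(f')$ holds. By the heir property $\theta(f)$ holds, i.e.\ there is $a \in K$ with $a \in X(K) \cap f^{-1}(X)$ — contradicting that $f$ displaces $X$. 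Note that here I only need to quote the definition of ``heir'' for the formula $\theta(g)$ with parameters from $K$; the roles of $f,f'$ are captured by the free variable $g$.

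The only real subtlety — and the step I expect to require the most care — is the first bullet's use of Observation~\ref{infty-def} to make ``$X \cap f^{-1}(X)$ is infinite'' a \emph{first-order} condition, so that it can be packaged into a single $K$-formula $\theta(g;w)$ amenable to the heir transfer. Once elimination of $\exists^\infty$ is in hand this is routine: fix the bound $n$ coming from Observation~\ref{infty-def} applied to the relevant definable family, and replace ``infinite'' by ``of size $\geq n$''. Everything else is bookkeeping with codes of definable bijections, which the lemma statement already licenses us to treat as elements of $\Cc$.
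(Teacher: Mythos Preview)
Your first bullet is correct and essentially matches the paper's proof: both use elimination of $\exists^\infty$ to package ``$X$ is infinite and $X \cap g^{-1}(X)$ is finite'' into a single formula, then push the bad parameter $e \in K'$ down to some $e' \in K$. The paper phrases this via the coheir side (finite satisfiability of $\tp(K'/Kf')$ in $K$) rather than the heir side, but these are the same move.

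Your second bullet, however, has a genuine gap. You set $\theta(g) := \exists a\,(\chi(a;c) \wedge \chi(g(a);c))$ and observe $\theta(f')$ holds. Since the only parameters in $\theta$ are $c \in K$, transferring to $\theta(f)$ uses nothing beyond $f' \equiv_K f$ --- no heir property is actually invoked. And $\theta(f)$ only says there exists $a \in \Cc$ with $a \in X$ and $f(a) \in X$; it does \emph{not} give $a \in K$. But ``$f$ $K$-displaces $X$'' only forbids such $a$ in $X(K)$, so there is no contradiction. The mistake is that you existentially quantified out the witness $a$ before applying the heir transfer, thereby losing the information that $a$ lived in $K'$.

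The fix is immediate: do not quantify $a$ away. Take $\theta(g;y) := (\chi(y;c) \wedge \chi(g(y);c))$ as a $K$-formula in $(g;y)$. Then $\theta(f';a)$ holds with $a \in K'$, and the heir property yields $a' \in K$ with $\theta(f;a')$, i.e.\ $a' \in X(K) \cap f^{-1}(X)$, the desired contradiction. This is exactly what the paper does (phrased dually: it pulls $a \in K'$ down to $a_0 \in K$ using finite satisfiability of $\tp(a/Kf')$ in $K$).
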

\begin{proof}
  First suppose $f$ if $K$-slight.  As $f' \equiv_K f$, the map $f'$
  is also $K$-slight.  If it is not $K'$-slight, there is a
  $K'$-definable infinite set $X$ such that $X \cap (f')^{-1}(X)$ is
  finite.  As $\tp(K'/Kf')$ is finitely satisfiable in $K$, and
  infinity is definable, we can pull the parameters of $X$ into $K$,
  finding a $K$-definable infinite set $X_0$ such that $X_0 \cap
  (f')^{-1}(X_0)$ is finite.  This contradicts $K$-slightness of $f'$.

  Next suppose $X$ is $K$-displaced by $f$.  Then $X$ is $K$-displaced
  by $f'$.  If $X$ is not $K'$-displaced by $f'$, there is some $a \in
  X(K')$ such that $f'(a) \in X$.  As $\tp(a/K f')$ is finitely
  satisfiable in $K$, there is some $a_0 \in X(K)$ such that $f'(a_0)
  \in X$, contradicting the fact that $X$ is $K$-displaced by $f'$.
\end{proof}

\begin{lemma} \label{nip-app}
  No $K$-slight map $K$-displaces an infinite $K$-definable set.
\end{lemma}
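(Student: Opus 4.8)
The plan is to argue by contradiction. Suppose $f$ is $K$-slight and $K$-displaces the infinite $K$-definable set $X$; I will produce a single element $a \in \Cc$ with $\dpr(a/K) \ge 2$, which contradicts dp-minimality via the dp-rank facts of \S\ref{sec:dpr} (namely that $x = x$ has dp-rank $1$, and that $\dpr(a/K) \ge 2$ is detected by two mutually $K$-indiscernible sequences, each not $Ka$-indiscernible).

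Two consequences of the hypotheses, both delivered by Lemma~\ref{heir}, power the construction. Call a conjugate $g$ of $f$ \emph{heir-generic over} a model $M \supseteq K$ if $\tp(g/M)$ is an heir of $\tp(f/K)$. First, every conjugate $g \equiv_K f$ is again $K$-slight and $K$-displaces $X$, and if $g$ is heir-generic over $M$ then $g$ is in fact $M$-slight and $X$ is $M$-displaced by $g$; in particular $g(b) \notin X$ for every $b \in X(M)$. Second, $M$-slightness says precisely that $Y \cap g^{-1}(Y)$ is infinite for each infinite $M$-definable $Y$, so if $g_1, \dots, g_n$ are chosen successively heir-generic over models containing all the earlier $g_i$, then $X \cap g_1^{-1}(X) \cap \cdots \cap g_n^{-1}(X)$ is infinite. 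Fixing the formula $\phi(x;y)$ saying ``the bijection coded by $y$ sends $x$ into $X$'' (all conjugates of $f$ are defined by one formula with varying parameters, so this is first-order), the ``slight'' hypothesis keeps these intersections large while the ``displaces'' hypothesis forces heir-generic conjugates to move points of $X$ out of $X$.

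Interleaving these gives, in one dimension, a half-graph: alternately pick $a_n$ in the infinite set $X \cap \bigcap_{i<n} f_i^{-1}(X)$ and then a conjugate $f_n$ heir-generic over a model containing $K$, $f_0,\dots,f_{n-1}$, and $a_1,\dots,a_n$; then $\phi(a_n,f_i)$ holds iff $i < n$ (for $i<n$ by choice of $a_n$, for $i \ge n$ because $f_i$ is heir-generic over a model containing $a_n \in X$). To upgrade this to a depth-$2$ pattern I run the construction in two dimensions and pass to indiscernible sequences: I aim for two mutually $K$-indiscernible sequences $I_1$, $I_2$ of conjugates of $f$ and a single $a \in X$ with $\phi(a,\cdot)$ true on an early term and false on a later term of each $I_k$, so neither $I_k$ is $Ka$-indiscernible and hence $\dpr(a/K) \ge 2$. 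Concretely I would build the ``lower halves'' of $I_1$ and $I_2$ to be mutually $K$-indiscernible via heir-generic steps, choose $a$ in the infinite set $X \cap f_{-1}^{-1}(X) \cap g_{-1}^{-1}(X)$ (where $f_{-1},g_{-1}$ are the top terms of the two lower halves, and this set is infinite by the slightness of $g_{-1}$ over a model containing $f_{-1}$), and then continue both sequences past the cut by terms heir-generic over a model containing $K$, the two lower halves, and $a$, arranged so the two completed sequences remain mutually $K$-indiscernible. Then $\phi(a,f_{-1})$ and $\phi(a,g_{-1})$ hold, while $\phi(a,f_i)$ and $\phi(a,g_i)$ fail for every term above the cut, since those are heir-generic over a model containing $a \in X$ and $X$ is displaced there.

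The hard part is that last step: making the two sequences \emph{mutually} indiscernible while simultaneously keeping the upper-half conjugates heir-generic over the base model that contains $a$ (so the displacement half of Lemma~\ref{heir} applies) — and $a$ is visibly \emph{not} heir-generic over the conjugates $f_{-1},g_{-1}$ that carry it back into $X$, which is precisely what breaks any naive attempt to build the whole array at once (the finitely many witnesses one needs impose incompatible construction orders). I expect to resolve this with the standard toolkit for invariant and mutually indiscernible sequences: build the lower halves as appropriate (co)heir sequences, pick $a$ with a controlled type over them subject only to the two constraints $f_{-1}(a),g_{-1}(a)\in X$, then continue each sequence by an heir sequence over the enlarged base and extract full indiscernibility by a Ramsey argument that retains the finitely many formulas recording the behaviour of $\phi(a,\cdot)$. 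Verifying that slightness, displacement, and mutual indiscernibility all survive this extraction simultaneously is the delicate bookkeeping the proof must carry out.
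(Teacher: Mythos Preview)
Your heir-generic construction and its two driving properties (slightness for ``stay in $X$'', displacement for ``move out of $X$'') are exactly right, but you have aimed at the wrong contradiction and, as you admit, left the hard step unfinished. The mutual-indiscernibility bookkeeping you describe is genuinely delicate, and you give no argument that the extraction you hope for preserves the heir-generic relation between the upper-half conjugates and the specific element $a$; since $a$ was chosen \emph{after} and \emph{depending on} $f_{-1},g_{-1}$, a Ramsey extraction over $K$ will typically destroy precisely the instances $\phi(a,f_{-1})$, $\phi(a,g_{-1})$ you need. So as written this is a gap, not a proof.

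The paper avoids the two-dimensional construction entirely: the same one-dimensional heir sequence already violates \emph{NIP}, not merely dp-minimality. Build models $K = K_0 \preceq K_1 \preceq \cdots$ and conjugates $f_0, f_1, \ldots$ with $\tp(f_i/K_i)$ an heir of $\tp(f/K)$ and $f_i$ $K_{i+1}$-definable; by Lemma~\ref{heir} each $f_i$ is $K_i$-slight and $K_i$-displaces $X$. Now for $w \in \{0,1\}^{<\omega}$ set
\[
X_w \;=\; \bigl\{\, y \in X : \textstyle\bigwedge_{i<|w|} \bigl(f_i(y) \in X \leftrightarrow w(i)=1\bigr) \,\bigr\},
\]
and prove by induction on $|w|$ that $X_w$ is infinite. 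If $X_w$ is infinite and $n = |w|$, then $X_w$ is $K_n$-definable, so $K_n$-slightness of $f_n$ gives $X_w \cap f_n^{-1}(X_w) \subseteq X_{w1}$ infinite, while $K_n$-displacement of $X$ gives $X_w(K_n) \subseteq X_{w0}$, hence $X_{w0}$ infinite. Thus every $X_w$ is nonempty and the formula ``$f_x(y) \in X$'' has the independence property, contradicting NIP. Your half-graph is just the chain $w = 1^n$ of this tree; the insight you missed is that slightness and displacement together populate \emph{every} branch, so one gets IP outright and no second sequence is needed.
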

\begin{proof}
  Suppose $f_0$ is a $K$-sligiht map which $K$-displaces an infinite
  $K$-definable set $Y$.  Inductively build a sequence of models $K_0 = K
  \preceq K_1 \preceq K_2 \preceq \cdots$ and bijections $f_0, f_1,
  f_2, \ldots$ such that
  \begin{itemize}
  \item $\tp(f_i/K_i)$ is an heir of $\tp(f_0/K)$.
  \item $f_i$ is $K_{i+1}$-definable.
  \end{itemize}
  By Lemma~\ref{heir}, $f_i$ is $K_i$-slight, and $K_i$-displaces $Y$.

  For $w \in \{0,1\}^{< \omega}$, consider the set
  \begin{equation*}
    Y_w = \left \{ y \in Y : \bigwedge_{i < |w|} f_i(y) \in^{w(i)} Y
    \right \}
  \end{equation*}
  where $\in^0$ denotes $\notin$ and $\in^1$ denotes $\in$.

  We will prove by induction on $|w|$ that $Y_w$ is infinite.  If we
  write $f_i$ as $f_{a_i}$, this shows that the formula $f_x(y) \in Y$
  has the independence property, a contradiction.

  For the base case, $Y_{\emptyset}$ is $Y$ which is infinite by
  assumption.

  Now suppose that $Y_w$ is infinite; we will show $Y_{w0}$ and
  $Y_{w1}$ are infinite.  Let $n = |w|$.  Then $Y_w$ is
  $K_n$-definable.  If $a \in Y_w(K_n) \subset Y(K_n)$, then $f_n(a)
  \notin Y$ because $Y$ is $K_n$-displaced by $f_n$.  This shows that
  the infinite set $Y_w(K_n)$ is contained in $Y_{w0}$.

  Also, as $f_n$ is $K_n$-slight and $Y_w$ is infinite and
  $K_n$-definable, $Y_w \cap f_n^{-1}(Y_w)$ is infinite.  This set is
  contained in $Y_w \cap f_n^{-1}(Y) = Y_{w1}$, so $Y_{w1}$ is
  infinite.

  So $Y_w$ being infinite implies $Y_{w0}$ and $Y_{w1}$ are infinite.
  This ensures that all $Y_w$ are infinite, hence non-empty,
  contradicting NIP.
\end{proof}
  
\begin{proposition}\label{slightness}~
  \begin{enumerate}
  \item If $f$ is a $K$-slight bijection and $X$ is $K$-definable,
    then for all but finitely many $x \in K$, we have $x \in X \iff
    f(x) \in X$.
  \item The $K$-slight bijections form a group under composition.
  \end{enumerate}
\end{proposition}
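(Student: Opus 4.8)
First I would dispense with part (1), which is essentially a corollary of Lemma~\ref{nip-app}. Suppose $f$ is $K$-slight and $X$ is $K$-definable. Let $Y_0 = \{x \in X : f(x) \notin X\}$ and $Y_1 = \{x \notin X : f(x) \in X\}$; these are $K$-definable, and I want to show each is finite. If, say, $Y_0$ were infinite, then $Y_0$ is an infinite $K$-definable set and $f$ maps every point of $Y_0$ out of $X \supseteq Y_0$, so in particular $Y_0(K) \cap f^{-1}(Y_0)$ is empty — i.e.\ $f$ $K$-displaces $Y_0$, contradicting Lemma~\ref{nip-app}. Wait — I need $Y_0(K) \cap f^{-1}(Y_0) = \emptyset$, and if $a \in Y_0(K)$ then $f(a) \notin X$, hence $f(a) \notin Y_0$, so indeed the intersection is empty. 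The same argument applied to $Y_1$ (which is infinite and $K$-definable, with $f(a) \in X$ but we want $f(a) \notin Y_1$: if $a \in Y_1$ then $a \notin X$, so... hmm, $f(a) \in X$ doesn't immediately give $f(a) \notin Y_1$). Better: apply the argument to $f^{-1}$, which is also $K$-slight once we know part (2) — but that's circular. Instead, note $Y_1 \cap f^{-1}(Y_1) = \emptyset$ directly: if $a \in Y_1$ and $f(a) \in Y_1$, then $f(a) \notin X$, contradicting $a \in Y_1 \Rightarrow f(a) \in X$. So $f$ $K$-displaces $Y_1$ too, and again Lemma~\ref{nip-app} forces $Y_1$ finite. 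This handles (1) cleanly without needing (2).

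For part (2), the identity is trivially $K$-slight, and I must show closure under composition and inverse. \emph{Inverse:} if $f$ is $K$-slight and $X$ is infinite $K$-definable, then $X \cap f^{-1}(X)$ is infinite; applying $f$ to this set gives an infinite set contained in $f(X) \cap X$, hence $f(X) \cap X$ is infinite. But I want $X \cap (f^{-1})^{-1}(X) = X \cap f(X)$ infinite — that's exactly what I just got. So $f^{-1}$ is $K$-slight. \emph{Composition:} suppose $f, g$ are $K$-slight; I want $f \circ g$ $K$-slight, i.e.\ $X \cap (fg)^{-1}(X) = X \cap g^{-1}f^{-1}(X)$ infinite for every infinite $K$-definable $X$. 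The naive attempt — $g^{-1}(X \cap f^{-1}(X))$ isn't contained in $X$ — fails, so this is the main obstacle, and I expect to need an heir/coheir move and part (1) together.

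Here is the approach for composition. Pass to $K' \succeq K$ with $\tp(g'/K')$ an heir of $\tp(g/K)$ and $g'$ coded in $K'$; by Lemma~\ref{heir}, $g'$ is $K'$-slight, and of course $f$ remains $K$-slight, so by part~(1) applied over $K$: for all but finitely many $x \in K$, $x \in X \iff f(x) \in X$. Now let $X$ be infinite $K$-definable. Since $g'$ is $K'$-slight, $X \cap (g')^{-1}(X)$ is infinite and $K'$-definable; by part~(1) applied to the $K'$-slight map $g'$ and the set $f^{-1}(X)$ (which is $K'$-definable since $f$ is $K$-definable $\subseteq K'$-definable): for all but finitely many $x \in K'$, $x \in f^{-1}(X) \iff g'(x) \in f^{-1}(X)$, i.e.\ $f(x) \in X \iff f(g'(x)) \in X$. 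Combining with the previous bullet — for cofinitely many $x \in K'$ (pulling the earlier statement up to $K'$ by elementarity), $x \in X \iff f(x) \in X$, hence $x \in X \iff f(g'(x)) \in X \iff$ (again by the cofinite statement, applied to $g'(x)$, provided $g'(x)$ avoids the bad set — which holds for cofinitely many $x$ since $g'$ is a bijection) $g'(x) \in X$. Thus for cofinitely many $x \in K'$, $x \in X \iff g'(x) \in X \iff f(g'(x)) \in X$, so $(f \circ g')(x) \in X \iff x \in X$ off a finite set. Therefore $X(K') \cap (f\circ g')^{-1}(X)$ is infinite (it differs from the infinite set $X(K')$ by finitely many points), which means $f \circ g'$ does \emph{not} $K'$-displace any infinite $K'$-definable set — but more to the point, running the contrapositive: if $f \circ g$ failed to be $K$-slight, there'd be an infinite $K$-definable $X$ with $X \cap (fg)^{-1}(X)$ finite; by Lemma~\ref{heir}-style reasoning (finite satisfiability pulls the witness into $K$, and finiteness is definable) $f \circ g'$ would fail $K'$-slightness at $X$, contradicting what we just showed. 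I would tighten the bookkeeping of "cofinitely many" sets when I write this out, but the structure is: reduce to an heir copy of $g$, then chain the three cofinite-agreement statements from part~(1).
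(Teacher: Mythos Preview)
There is a genuine gap in your proof of part~(1): the sets $Y_0 = \{x \in X : f(x) \notin X\}$ and $Y_1$ are \emph{not} $K$-definable. A $K$-slight bijection $f$ is by definition only $\Cc$-definable, typically with parameters outside $K$ (indeed the motivating example is translation by a nonzero $K$-infinitesimal). So $Y_0, Y_1$ are merely externally definable over $K$, and you cannot feed them directly into Lemma~\ref{nip-app}, which requires an infinite $K$-definable set. The paper repairs exactly this: it takes the externally definable set $S = \{x \in K : x \in X,\ f(x) \notin X\}$, invokes Corollary~\ref{external-infinite} to find an infinite $K$-definable $Y$ with $Y(K) \subseteq S$, and then applies Lemma~\ref{nip-app} to the $K$-definable set $X \cap Y$. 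Your argument becomes correct once you insert this step; but as written it is circular on the key point.

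The same confusion recurs in your composition argument for part~(2): you write ``$f^{-1}(X)$ is $K'$-definable since $f$ is $K$-definable,'' but $f$ is not $K$-definable. Your inverse argument, by contrast, is fine and pleasantly direct. For composition the paper takes a much shorter route that avoids heirs entirely: it proves the single implication ``$f$ and $g \circ f$ $K$-slight $\Rightarrow$ $g$ $K$-slight'' by noting that part~(1) applied to $f$ and to $g \circ f$ gives $f(x) \in X \iff x \in X \iff g(f(x)) \in X$ for almost all $x \in K$, so the infinite set $f(X(K))$ lies almost entirely in $X \cap g^{-1}(X)$. Closure under inverse and composition both follow formally from this one implication. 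Your heir-based plan could perhaps be made to work once the definability issues are sorted out, but it is substantially more intricate than necessary.
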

\begin{proof}
  \begin{enumerate}
  \item Let $S \subset K$ be the externally definable set of $x$ such
    that $x \in X$ and $f(x) \notin X$.  We claim that $S$ is finite.
    Otherwise, by Corollary~\ref{external-infinite}, there is some
    infinite $K$-definable set $Y$ such that $Y(K) \subset S$.  Then
    $X \cap Y$ is an infinite $K$-definable set which is $K$-displaced
    by $f$, by choice of $S$.  This contradicts Lemma~\ref{nip-app}.

    So $S$ is finite.  This means that for almost all $x \in K$, we
    have $x \in X \implies f(x) \in X$.  Replacing $X$ with its
    complement, we obtain the reverse implication (with at most
    finitely many exceptions).
  \item Suppose $f$ and $g \circ f$ are $K$-slight.  We will show that
    $g$ is $K$-slight.  Let $X$ be an infinite $K$-definable set.
    Then for almost all $x \in K$, we have
    \begin{equation*}
      f(x) \in X \iff x \in X \iff g(f(x)) \in X 
    \end{equation*}
    So the infinite set $f(X(K))$ is almost entirely contained in $X
    \cap g^{-1}(X)$.  Thus $X \cap g^{-1}(X)$ is infinite, for
    arbitrary infinite $K$-definable sets $X$.
  \end{enumerate}
\end{proof}

\begin{corollary}\label{infinitesimal-sums}
  The set $I_K$ of $K$-infinitesimals is a subgroup of $(\Cc,+)$.  The
  set $I_K$ satisfies conditions \ref{t1}-\ref{t4} of
  \S\ref{sec:filters}.  There is a unique group topology on $(K,+)$
  such that $\{X \mininf X : X \text{ is infinite and
    $K$-definable}\}$ is a neighborhood basis of 0.
\end{corollary}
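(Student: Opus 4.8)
The plan is to read off this corollary from Proposition~\ref{slightness} together with the dictionary between $\tau$ and $I_\tau$ set up in \S\ref{sec:filters}; the only point not already in hand is that $I_K$ is closed under addition and subtraction. For that, recall from just after the definition of ``$K$-slight'' that the translation $t_\epsilon \colon x \mapsto x + \epsilon$ is a $K$-slight bijection if and only if $\epsilon \in I_K$. So if $\epsilon_1, \epsilon_2 \in I_K$, then $t_{\epsilon_1}$ and $t_{\epsilon_2}$ are $K$-slight, hence by part (2) of Proposition~\ref{slightness} so is the composite $t_{\epsilon_1} \circ t_{\epsilon_2} = t_{\epsilon_1 + \epsilon_2}$, giving $\epsilon_1 + \epsilon_2 \in I_K$; and $t_{-\epsilon_1} = t_{\epsilon_1}^{-1}$ is $K$-slight as the inverse of a group element, giving $-\epsilon_1 \in I_K$. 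Since $0 \in I_K$ already, $I_K$ is a subgroup of $(\Cc, +)$, which is exactly condition \ref{t3}.

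Conditions \ref{t1}, \ref{t2}, \ref{t4} are the restatements, recorded right after Proposition~\ref{basic-infs}, of conditions \ref{f1}, \ref{f2}, \ref{f4}, all of which were verified there; so $I_K$ now satisfies \ref{t1}--\ref{t4}. To get the topology, I would transfer condition \ref{t3} back to the $\tau$-side: for $U \in \tau$ we have $I_K - I_K = I_K \subseteq U$, so by compactness --- legitimate since every member of $\tau$ is definable over the small model $K \preceq \Cc$ --- there is $V \in \tau$ with $V - V \subseteq U$, which is condition \ref{f3}. Thus $\tau$ is a filtered family of $K$-definable sets satisfying \ref{f1}--\ref{f3}, and the results of \cite{prestel-ziegler} recalled in \S\ref{sec:filters} give that $\tau$ is a neighborhood basis for a unique Hausdorff non-discrete group topology on $(K,+)$.

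I do not expect a real obstacle here: the substance lives in Lemma~\ref{nip-app} and Proposition~\ref{slightness}, and this corollary is essentially a packaging step. The one thing to keep straight is the direction of the $\tau$-versus-$I_\tau$ translation --- we establish condition \ref{t3} directly for $I_K$ and must pass to condition \ref{f3} for $\tau$ via the compactness step above, which is exactly where definability of the members of $\tau$ over the small model $K$ is used.
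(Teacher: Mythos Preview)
Your proposal is correct and is exactly the argument the paper intends: the corollary is stated without proof, but the subsection introduction makes clear it is meant to follow immediately from Proposition~\ref{slightness} (giving closure of $I_K$ under $\pm$ via the translation maps $t_\epsilon$) together with the $\tau \leftrightarrow I_\tau$ dictionary of \S\ref{sec:filters}. Your compactness step recovering condition~\ref{f3} from condition~\ref{t3} is the standard verification of that dictionary and is fine.
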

We will call this topology the \emph{canonical topology} on $K$.  We
may also talk about the canonical topology on $\Cc$, because $\Cc$ is
a model just like $K$.

\section{Germs at 0}
Say that two definable sets $X, Y \subset \Cc$ have the same
\emph{germ at 0} if $0 \notin \overline{X \Delta Y}$.  This is an
equivalence relation.  The main goal of this section is
Theorem~\ref{few-germs}, asserting that there are only a small number
of germs at 0---or equivalently, that there are only a small number of
infinitesimal types over $\Cc$.  This turns out to be the key to
proving a number of basic facts about the canonical topology, as we
will see in \S\ref{subsection-applications}:
\begin{itemize}
\item Definable subsets of $\Cc$ have finite boundary.
\item Products of infinitesimals are infinitesimal.
\item Products of non-infinitesimals are non-infinitesimal.
\item The canonical topology has a definable basis.
\end{itemize}

To prove Theorem~\ref{few-germs}, we would like to mimic Pierre
Simon's argument in the case of ordered dp-minimal structures (Lemma
2.10 in \cite{dpOrdered}).  Matters are complicated by our lack of a
definable neighborhood basis.

In what follows, we'll refer to sets of the form $X \mininf X$ with
$X$ infinite and definable, as ``basic neighborhoods (of 0)''.

Let $\mathcal{U}$ be a 0-definable family of basic neighborhoods (of 0).
\begin{definition}
  Say that $\mathcal{U}$ is \emph{good} if for every finite set $S
  \subset \Cc^\times$, there is some $U \in \mathcal{U}$ such that $U
  \cap S = \emptyset$.
\end{definition}
\begin{definition}
  Say that $\mathcal{U}$ is \emph{mediocre} if for every finite set
  $\{a_1,\ldots,a_n\} \subset \Cc^\times$ of full dp-rank (of dp-rank
  $n$), there is some $U \in \mathcal{U}$ such that $U \cap S =
  \emptyset$.
\end{definition}

A good family would be helpful, but with work, a mediocre family will
suffice.  This is good, because of the following proposition:

\begin{proposition} \label{pseudo-basis}
  There is a mediocre family of basic neighborhoods.
\end{proposition}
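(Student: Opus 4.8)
The plan is to produce the mediocre family by a careful counting/compactness argument: I want a $0$-definable family $\mathcal{U}$ of basic neighborhoods such that, given any finite tuple $\bar a = (a_1,\dots,a_n)$ of dp-rank exactly $n$, some member of $\mathcal{U}$ avoids all of the $a_i$. The natural candidates for members of $\mathcal{U}$ are the sets $X \mininf X$ where $X$ ranges over a uniformly definable family $\{X_c : c\}$ of infinite co-infinite definable sets; such a family exists because $\Cc$ is not strongly minimal and eliminates $\exists^\infty$ (Observation~\ref{infty-def}). By the identity $(aX)\mininf(aX) = a\cdot(X\mininf X)$ and by condition 4, translating and scaling the $X_c$ keeps us inside the world of basic neighborhoods, so I have a lot of freedom in choosing the family.

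First I would fix, for each $i$, the requirement ``$a_i \notin U$''. The key point is that for a \emph{single} nonzero element $a$, the set of basic neighborhoods $U$ with $a \notin U$ is ``generic'': condition 2 of Proposition~\ref{basic-infs} (together with the scaling identity) already says that for every $a \ne 0$ there is an infinite definable $X$ with $a \notin X\mininf X$. So the obstacle is not any single $a_i$ but the \emph{simultaneous} requirement over all $n$ of them, with the constraint that $\mathcal{U}$ must be chosen once and for all, before seeing $\bar a$, and must work for all $n$ at once. Here is where the dp-rank hypothesis enters: if $\bar a$ has full dp-rank $n$, the $a_i$ are in a strong sense ``mutually independent'', and one should be able to find a single infinite definable $X$ over $\acl(\emptyset)$ (or over a fixed small model) such that $a_i \notin X \mininf X$ for all $i$ simultaneously — essentially because avoiding $X\mininf X$ is a ``large'' condition on each coordinate and the coordinates don't interact. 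I would make this precise by an induction on $n$: given that the first $n-1$ coordinates can be simultaneously avoided by some $X$ in the family, use that $a_n \notin \acl(a_1,\dots,a_{n-1})$ (which is exactly what full dp-rank gives, via subadditivity as in the proof of the Lemma in \S\ref{sec:begin}) to perturb $X$ — by a translation or by passing to an intersection with a translate, as in the filteredness argument — so as to also exclude $a_n$, while keeping $a_1,\dots,a_{n-1}$ excluded.

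The main obstacle I expect is making the family $\mathcal{U}$ genuinely $0$-definable (a single definable family, indexed by a definable parameter set) while retaining enough flexibility to realize all the exclusions; the inductive argument sketched above naturally produces, for each $\bar a$, an $X$ depending on $\bar a$, and one must instead exhibit a fixed definable family rich enough to contain a suitable $X$ for every $\bar a$ of full rank. I would handle this by a compactness/honest-definitions argument in the style of Corollary~\ref{external-infinite}: the condition ``there exists an infinite definable $X_c$ with $\bar a \notin X_c \mininf X_c$'' is, for each fixed $n$, a property of $\tp(\bar a)$; since there are boundedly many relevant types of full-rank tuples up to the action of scaling and translation, finitely many uniformly definable families suffice, and one takes $\mathcal{U}$ to be their union (closed under the scalings $U \mapsto aU$ for $a$ in a fixed small model, which preserves being a basic neighborhood). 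Once the finitely-many-families reduction is in place, the per-$n$ statement follows from the inductive exclusion argument above, and assembling them gives the mediocre family.
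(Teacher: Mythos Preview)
Your proposal has two genuine gaps, and the paper's argument is quite different from what you sketch.

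First, the inductive step is not actually carried out. You write that, having found a basic neighborhood $U$ with $a_1,\dots,a_{n-1}\notin U$, one should ``use that $a_n \notin \acl(a_1,\dots,a_{n-1})$ to perturb $X$ \dots\ so as to also exclude $a_n$, while keeping $a_1,\dots,a_{n-1}$ excluded.'' But you never say what the perturbation is or why algebraic independence of $a_n$ makes it possible. Translating $X$ does not change $X\mininf X$; scaling by $\lambda$ gives $\lambda\cdot(X\mininf X)$, but at this point in the paper there is no valuation and no reason for the scalings $\lambda U$ to be nested, so making $U$ ``smaller'' to exclude $a_n$ is not available. Filteredness lets you pass to a basic neighborhood inside $U\cap V$ for any two basic neighborhoods, but that new neighborhood need not lie in your fixed $0$-definable family. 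In short, the role of the full-rank hypothesis in your sketch is purely rhetorical.

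Second, the compactness step at the end is circular. You assert that ``there are boundedly many relevant types of full-rank tuples up to the action of scaling and translation,'' and use this to reduce to finitely many families. But bounding the number of (infinitesimal) types is exactly the content of Theorem~\ref{few-germs}, whose proof \emph{uses} Proposition~\ref{pseudo-basis}. You cannot assume it here.

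The paper's proof proceeds along entirely different lines. It splits on whether the partial type ``nonzero $\Cc$-infinitesimal'' is finitely satisfiable in every small model. If not, one easily gets a \emph{good} family. If so, one first shows that small models carry the subspace topology, then (Claim~\ref{claim4.5}) that there is a single $0$-definable family $\{U_b\}$ such that any $a\in K'\setminus K$ can be separated from $K$ by some $a+U_b$. The full-rank hypothesis is then exploited via a translation trick: given $(a_1,\dots,a_n)$ of dp-rank $n$, choose $t$ with $\dpr(\vec a,t)=n+1$; subadditivity forces $t\notin\acl(t+a_1,\dots,t+a_n)$, so one can put $t+a_1,\dots,t+a_n$ into a small model $K$ not containing $t$, apply the separation claim, and read off $U_b\cap\{a_1,\dots,a_n\}=\emptyset$. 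The dp-rank hypothesis is used once, precisely to arrange this separation, rather than in an induction on $n$.
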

\begin{proof}
  Let $\Sigma(x)$ be the partial type over $\Cc$ saying that $x \ne 0$
  and $x$ is a $\Cc$-infinitesimal.

  First suppose that $\Sigma(x)$ is not finitely satisfiable in some
  small model $K$.  Then there is some $\Cc$-definable neighborhood $U
  = U_b$ such that $U_b \cap K = \{0\}$.  Then for all $n$, we have
  \begin{align*}
    \forall a_1, \ldots, a_n \in K^\times : U_b \cap \{a_1, \ldots,
    a_n\} = \emptyset \\
    \forall a_1, \ldots, a_n \in K^\times \exists b \in \Cc : U_b \cap
    \{a_1, \ldots, a_n\} = \emptyset \\
    \forall a_1, \ldots, a_n \in K^\times \exists b \in K : U_b \cap
    \{a_1,\ldots,a_n\} = \emptyset \\
    \forall a_1, \ldots, a_n \in \Cc^\times \exists b \in \Cc : U_b
    \cap \{a_1,\ldots,a_n\} = \emptyset
  \end{align*}
  Consequently the family $\{U_b : b \in \Cc\}$ is a good family of
  basic neighborhoods, hence a mediocre family.

  Therefore, we may assume that $\Sigma(x)$ is finitely satisfiable in
  any small model $K$.  This has the following counterintuitive
  corollary:
  \begin{claim}
    The canonical topology on $K$ is the induced subspace topology
    from the canonical topology on $\Cc$.
  \end{claim}
  \begin{proof}
    The induced subspace topology on $K$ will have as neighborhood
    basis of 0, the sets of the form $N \cap K$ for $N$ a
    $\Cc$-definable basic neighborhood.  This already includes the
    $K$-definable basic neighborhoods on $K$, so it remains to show
    that if $N$ is a $\Cc$-definable basic neighborhood, then there is
    a $K$-definable basic neighborhood $N'$ such that $N' \cap K
    \subset N \cap K$.
    
    By Corollary~\ref{infinitesimal-sums}, applied to the basic
    neighborhoods on $\Cc$, there must be some $\Cc$-definable basic
    neighborhood $U$ such that $U - U \subseteq N$.  We claim $U \cap
    K$ is infinite.  Otherwise, by Hausdorffness we could find a
    smaller $\Cc$-definable neighborhood $V$ such that $V \cap K =
    \{0\}$.  This contradicts the finite satisfiability of $\Sigma(x)$
    in $K$.

    Because $U \cap K$ is infinite, it contains $Q(K)$ for some
    infinite $K$-definable set $Q$, by
    Corollary~\ref{external-infinite}.  Now $Q \mininf Q$ is a
    $K$-definable basic neighborhood, and
    \begin{equation*}
      (Q \mininf Q) \cap K = Q(K) \mininf Q(K) \subseteq Q(K) - Q(K)
      \subseteq U-U \subseteq N
    \end{equation*}
    so that $(Q \mininf Q) \cap K \subseteq N \cap K$.  Then $N' := Q
    \mininf Q$ is our desired $K$-definable basic neighborhood.  This
    proves the claim.
  \end{proof}
  \begin{claim}\label{claim4.5}
    There is a $\emptyset$-definable family of basic neighborhoods
    $U_b$ such that if $K \preceq K'$ is any inclusion of models, and
    $a \in K' \setminus K$, then $(a + U_b) \cap K = \emptyset$ for
    some $b \in K'$.
  \end{claim}
  \begin{proof}
    If not, then by compactness, we would obtain a pair of models $K
    \preceq K'$ and an element $a$ such that every $K'$-definable
    neighborhood of $a$ intersects $K$.  In other words, $a$ is in the
    topological closure $\overline{K}$ of $K$.  Embed $K'$ into $\Cc$.
    Then $K'$ has the induced subspace topology, so $a \in
    \overline{K}$ even within $\Cc$.  Because the topology on $\Cc$ is
    $\Aut(\Cc/K)$-invariant, all the conjugates of $a$ over $K$ are in
    $\overline{K}$, so $\overline{K}$ is big.  But in a Hausdorff
    topology, the closure of a set is bounded in terms of the size of
    the set (because every point in the closure can be written as an
    ultralimit of an ultrafilter on the set, and there are only a
    bounded number of ultrafilters).
  \end{proof}
  Let $U_b$ be the family from Claim~\ref{claim4.5}.  We claim that $U_b$
  is mediocre.  To see this, suppose $a_1, \ldots, a_n$ are elements
  of $\Cc^\times$ with dp-rank $n$ over the empty set.  By properties
  of dp-rank, we can find an element $t \in \Cc$ such that
  $(\vec{a},t)$ has dp-rank $n+1$.

  By subadditivity of dp-rank,
  \begin{align*}
    n+1 & = \dpr(t,t+a_1,\ldots,t+a_n) \\ & \le \dpr(t/t+a_1,\ldots,t+a_n) +
    \dpr(t+a_1,\ldots,t+a_n) \\ & \le 1 + n
  \end{align*}
  so equality holds, and $t \notin \acl(t-a_1,\ldots,t-a_n)$.
  Therefore we can find a small model $K$ such that $t \notin K
  \supseteq \{t+a_1,\ldots,t+a_n\}$.  By the claim there is some $b
  \in \Cc$ such that
  \begin{equation*}
    (t + U_b) \cap \{t+a_1,\ldots,t+a_n\} \subseteq (t + U_b) \cap K =
    \emptyset
  \end{equation*}
  so that $U_b \cap \{a_1, \ldots, a_n\} = \emptyset$.
\end{proof}

\begin{lemma}\label{pseudo-basis-2}
  Let $\mathcal{U}$ be a mediocre family of basic neighborhoods.  Then
  given any small collection $\mathcal{C}$ of infinite definable sets,
  there is some $U \in \mathcal{U}$ such that $C \setminus U$ is
  infinite for every $C \in \mathcal{C}$.
\end{lemma}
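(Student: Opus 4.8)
The plan is to reduce to the case of a finite family $\mathcal{C}$, and then to derive a contradiction from the negation by producing a single finite tuple of full dp-rank which meets every member of $\mathcal{U}$ — something a mediocre family forbids. The mechanism for the last step is a pigeonhole argument: if $C \setminus U$ is finite then it has size at most some uniform bound $N$, so $N+1$ sufficiently generic points of $C$ cannot all avoid $U$.

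First I would set up the reduction. Write $\mathcal{U} = \{U_b : b \in B\}$ as a definable family. By elimination of $\exists^\infty$ (Observation~\ref{infty-def}), for each fixed definable set $C$ the condition ``$C \setminus U_b$ is infinite'' is a first-order condition on $b$. Hence $\{$``$C \setminus U_z$ is infinite''$ : C \in \mathcal{C}\} \cup \{z \in B\}$ is a partial type over a small set of parameters; it is realized if it is finitely satisfiable, and any realization is exactly the $U$ we want. A finite fragment of this type mentions only finitely many $C \in \mathcal{C}$, so finite satisfiability is precisely the assertion of the lemma for finite families. Thus it suffices to treat $\mathcal{C} = \{C_1, \dots, C_m\}$ finite.

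For the finite case, suppose toward a contradiction that every $U \in \mathcal{U}$ has $C_j \setminus U$ finite for some $j$. Applying elimination of $\exists^\infty$ to each of the finitely many families $\{C_j \setminus U_b : b \in B\}$ yields a single $N$ such that: for every $b \in B$ there is $j$ with $|C_j \setminus U_b| \le N$. Fix a small model $K$ over which $C_1, \dots, C_m$ are defined. Each $C_j$ is infinite and definable, hence has dp-rank $1$ (by dp-minimality together with property 4 of \S\ref{sec:dpr}), so by repeated use of property 8 of \S\ref{sec:dpr} I can choose $a_{j,i} \in C_j$ (for $1 \le j \le m$, $1 \le i \le N+1$) such that the tuple $(a_{j,i})_{j,i}$ has dp-rank $m(N+1)$ over $K$. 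By monotonicity and subadditivity of dp-rank this tuple also has dp-rank $m(N+1)$ over $\emptyset$; in particular its entries are pairwise distinct and none lies in $\acl(K)$, which contains $0$ (a repeated entry, or one in $\acl(K)$, would force the dp-rank below $m(N+1)$). So $A := \{a_{j,i}\}$ is a subset of $\Cc^\times$ whose dp-rank equals $|A|$. Now given any $b \in B$, pick $j$ with $|C_j \setminus U_b| \le N$; since $a_{j,1}, \dots, a_{j,N+1}$ are $N+1$ distinct points of $C_j$, at least one of them lies in $C_j \cap U_b \subseteq U_b$. Hence $A$ meets every member of $\mathcal{U}$, contradicting mediocrity.

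I do not expect a serious obstacle here — the argument is short — but the step to get right is the uniform use of elimination of $\exists^\infty$ in the parameter $b$ of $U_b$: once to express ``$C \setminus U_z$ is infinite'' as a formula in the reduction, and once to extract the single bound $N$ in the finite case. This is the one place where it matters that $\mathcal{U}$ is a definable \emph{family} rather than just a set of definable sets; everything else is routine manipulation of the dp-rank properties recalled in \S\ref{sec:dpr}.
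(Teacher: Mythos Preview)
Your proof is correct and follows essentially the same approach as the paper's: reduce to a finite family by compactness and definability of ``infinite,'' extract a uniform bound $N$, choose a tuple of full dp-rank from $\prod_j C_j^{N+1}$, and invoke mediocrity. The only cosmetic difference is that the paper argues directly rather than by contradiction---it applies mediocrity to the full-rank tuple to obtain $U$, then observes each $C_j \setminus U$ already contains the chosen points and hence has size $\ge N$---whereas you run the contrapositive pigeonhole; these are the same argument.
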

\begin{proof}
  Because infinity is definable and $\mathcal{U}$ is a single
  definable family, it suffices by compactness to consider the case
  when $\mathcal{C}$ if a finite collection $\{C_1, \ldots, C_n\}$.
  By definability of infinity, there is some $N$ (depending on
  $\mathcal{C}$) such that $C_i \setminus U$ will be infinite as long
  as it has size at least $N$.

  Let $A$ be a set over which $C_1, \ldots, C_n$ are all defined.  The
  set $\prod_{i = 1}^n C_i^N$ has dp-rank $N \cdot n$, so we can find
  some tuple in it, having dp-rank $N \cdot n$ over $A$, hence over
  $\emptyset$.  By mediocrity, we can find some $U \in \mathcal{U}$
  that $U$ avoids this entire tuple.  By choice of $N$, now each $C_i
  \setminus U$ is infinite.
\end{proof}

\begin{theorem}\label{few-germs}
  There are only a bounded number of germs at 0 among definable
  subsets of $\Cc$.
\end{theorem}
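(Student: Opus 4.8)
The plan is to mimic Simon's argument (Lemma 2.10 in \cite{dpOrdered}), using the mediocre family $\mathcal{U}$ from Proposition~\ref{pseudo-basis} as a substitute for a definable neighborhood basis. Suppose toward a contradiction that there are unboundedly many germs at $0$; then for a sufficiently large cardinal we can produce a sequence $(X_\alpha)_{\alpha < \lambda}$ of definable subsets of $\Cc$, pairwise inequivalent as germs at $0$, and by Ramsey/Erd\H{o}s--Rado together with saturation we may assume the sequence $(X_\alpha)$ is indiscernible (over the parameters defining $\mathcal{U}$). Since $X_\alpha$ and $X_\beta$ have different germs, $0 \in \overline{X_\alpha \Delta X_\beta}$ for $\alpha \ne \beta$; by indiscernibility and the symmetry of $\Delta$, this closure condition holds for \emph{every} pair, with a single fixed formula witnessing membership in the symmetric difference.

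The heart of the argument is to convert this into a randomness pattern of unbounded depth, contradicting NIP (indeed dp-minimality). First I would apply Lemma~\ref{pseudo-basis-2}: given the small collection of infinite definable sets naturally attached to finitely many of the $X_\alpha$'s (their relevant pieces near $0$, which are infinite because $0$ lies in the relevant closures), there is a single $U \in \mathcal{U}$ such that each of these sets remains infinite after removing $U$. The point of mediocrity is that $U$ can be chosen to avoid any prescribed finite tuple of full dp-rank, which is exactly what lets one realize, inside $U \setminus (\text{finite exceptional set})$, the elements needed to witness alternation: for a finite set $w$ of indices and a target pattern, one finds a point $\epsilon$ that is in $X_\alpha$ for the indices where the pattern says ``in'' and outside $X_\beta$ where it says ``out'', with $\epsilon$ small enough (i.e. inside a suitable basic neighborhood) that the $\mininf$-structure is not disturbed. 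Iterating along the indiscernible sequence and using that basic neighborhoods are closed under the operations recorded in Corollary~\ref{infinitesimal-sums}, one builds, for each finite $0/1$-pattern, a realization, so the formula $x \in X_y$ (with $y$ ranging over codes of the $X_\alpha$) has infinite VC-dimension, contradicting NIP.

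The main obstacle I expect is the bookkeeping in the previous step: unlike the ordered case, we have no definable neighborhood basis, so ``small enough'' has to be managed through the mediocre family and the closure properties of basic neighborhoods simultaneously, and one must be careful that shrinking the neighborhood to fix one coordinate of the pattern does not undo earlier coordinates. Concretely, one wants a descending run of basic neighborhoods $U \supseteq U - U \supseteq \cdots$ (available by Corollary~\ref{infinitesimal-sums}) along which the finitely many exceptional sets stay infinite (available by Lemma~\ref{pseudo-basis-2} applied uniformly), and then extract the required witnesses by a dp-rank computation analogous to those in Lemma~\ref{nip-app} and Lemma~\ref{pseudo-basis-2}. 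Once the alternation is set up, deriving the contradiction from NIP is routine.
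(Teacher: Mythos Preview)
Your plan has the right ingredients (Erd\H{o}s--Rado, the mediocre family, a contradiction with dp-minimality), but it is missing the two concrete moves that make the argument go through, and as written the central step does not work.

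First, you aim to show directly that the formula $x \in X_y$ has the independence property by realizing arbitrary $0/1$ patterns in the indiscernible family $(X_\alpha)$. Pairwise distinctness of germs does not give this: knowing $0 \in \overline{X_\alpha \Delta X_\beta}$ for each pair tells you nothing about whether a \emph{single} point near $0$ can lie in a prescribed subfamily and outside the rest. The paper's first Claim is precisely the reduction that fixes this. From the indiscernible $Y_i$'s one passes to $X_i := Y_{2i}\,\Delta\,Y_{2i+1}$ (so $0 \in \overline{X_i}$), uses NIP to get $k$-inconsistency of the $X_i$'s, and then repeatedly replaces $X_i$ by $X_{2i}\cap X_{2i+1}$ until $0 \notin \overline{X_i \cap X_j}$ for $i\ne j$. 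After this reduction the $X_i$'s are ``almost disjoint near $0$'': for each $i_0$ the set $D := X_{i_0}\cap\bigcap_{i\ne i_0} X_i^c$ has $0$ in its closure. Without this step you have no handle on the multi-index intersection pattern near $0$, and your ``alternation'' paragraph cannot be carried out.

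Second, the contradiction in the paper is not an IP pattern in the $X_i$'s alone; it is a depth-$2$ randomness pattern against dp-minimality, with the $X_i$'s as one row and a carefully built sequence $U_1,U_2,\ldots$ from the mediocre family as the other. The $U_j$'s are not a descending chain $U \supseteq U-U \supseteq \cdots$ as you suggest; rather one grows a chain of small models $K_1 \preceq K_2 \preceq \cdots$ and, at stage $j$, uses Lemma~\ref{pseudo-basis-2} to pick $U_j \in \mathcal{U}$ so that $C \setminus U_j$ is infinite for \emph{every} infinite $K_j$-definable $C$, then absorbs the parameters of $U_j$ into $K_{j+1}$. With the $X_i$'s almost disjoint near $0$, one shows that for any $(i_0,j_0)$ the set $D \cap U_1 \cap \cdots \cap U_{j_0-1}$ is infinite (it is a neighborhood of $0$ intersected with a set having $0$ in its closure), and then the inductive choice of $U_{j_0},U_{j_0+1},\ldots$ lets one successively exit each of them while staying infinite. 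This yields $a$ with $a \in X_i \iff i=i_0$ and $a \in U_j \iff j<j_0$, and the families $\{X_i\}$ and $\{U_j\setminus U_{j+1}\}$ then violate dp-minimality. Your proposal does not contain this inductive parameter-absorption step, and without it the ``shrinking the neighborhood might undo earlier coordinates'' worry you raise is exactly what goes wrong.
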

\begin{proof}
  Suppose not.
  \begin{claim}
    There is some sequence $X_1, X_2, \ldots$ of definable subsets of
    $\Cc^\times$, all belonging to a single definable family, such
    that $0 \in \overline{X_i}$ and $0 \notin \overline{X_i \cap X_j}$
    for $i \ne j$.
  \end{claim}
  \begin{proof}
    By Morley-Erdos-Rado, we can produce an indiscernible sequence of
    sets $Y_1, Y_2, Y_3, \ldots \subseteq \Cc$ having pairwise
    distinct germs at 0.  Let $X_i = Y_{2i} \Delta Y_{2i+1}$; then $0
    \in \overline{X_i}$.  By indiscernibility, 0 is in every $Y_i$ or
    in none; either way each $X_i \subseteq \Cc^\times$.

    By NIP, the collection $\{X_i\}$ is $k$-inconsistent for some $k$.
    Replace $X_i$ with $X_{2i} \cap X_{2i+1}$ until $0 \notin
    \overline{X_1 \cap X_2}$.  This process must terminate within
    $\log_2 k$ steps or so.
  \end{proof}
  Fix $X_1, X_2, \ldots$ from the claim.  Let $K_1$ be a small model
  over which the $X_i$ are defined.  Let $\mathcal{U}$ be a mediocre
  famiy from Proposition~\ref{pseudo-basis}.  Inductively build a
  sequence $K_1 \preceq K_2 \preceq \cdots$ and $U_1, U_2, \ldots \in
  \mathcal{U}$ as follows:
  \begin{itemize}
  \item $U_i$ is chosen so that $C \setminus U_i$ is infinite for
    every infinite $K_i$-definable set $C \subseteq \Cc$.  This is
    possible by Lemma~\ref{pseudo-basis-2}.
  \item $K_{i+1}$ is chosen so that $U_i$ is $K_{i+1}$-definable.
  \end{itemize}

  \begin{claim}
    For any $i_0, j_0$, there is some $a$ such that $a \in X_i \iff i
    = i_0$, and $a \in U_j \iff j < j_0$.
  \end{claim}
  \begin{proof}
    By compactness, it suffices to only consider $X_1, \ldots, X_n$
    and $U_1, \ldots, U_n$.  Let
    \begin{equation*}
      D = X_1^c \cap X_2^c \cap \cdots \cap X_{i_0-1}^c \cap X_{i_0}
      \cap X_{i_0 + 1}^c \cap \cdots \cap X_n^c
    \end{equation*}
    where $S^c$ denotes the complement $\Cc \setminus S$ of a set $S$.

    The set $D$ is $K$-definable, and $0 \in \overline{D} \setminus
    D$, by choice of the $X_i$'s.  So the set
    \begin{equation*}
      S = D \cap U_1 \cap \cdots \cap U_{j_0 - 1}
    \end{equation*}
    is infinite, as $U_1 \cap \cdots \cap U_{j_0 - 1}$ is a
    neighborhood of 0.

    As $S$ is $K_{j_0}$ definable, it follows that $S \cap U_{j_0}^c$
    is infinite, by choice of $U_{j_0}$.  As $S \cap U_{j_0}^c$ is
    $K_{j_0 + 1}$-definable, it follows that $S \cap U_{j_0}^c \cap
    U_{j_0 + 1}^c$ is infinite.  Continuing on in this fashion, we
    ultimately see that
    \begin{equation*}
      S \cap U_{j_0}^c \cap \cdots \cap U_n^c
    \end{equation*}
    is infinite.  If $a$ is any element of this set, then $a \in D$,
    so $a \in X_i \iff i = i_0$ (for $1 \le i \le n$), and
    \begin{equation*}
      a \in U_1 \cap \cdots \cap U_{j_0 - 1} \cap U_{j_0}^c \cap
      \cdots \cap U_n^c,
    \end{equation*}
    so $a \in U_j \iff j < j_0$ (for $1 \le j \le n$).

    Finally, using compactness, we can send $n$ to $\infty$.
  \end{proof}
  Given the claim, the sets $\{X_i\}$ and $\{U_i \setminus U_{i+1}\}$
  now directly contradict dp-minimality.
\end{proof}

\begin{corollary}\label{few-infinitesimals}
  There are only a bounded number of infinitesimal types over $\Cc$.
\end{corollary}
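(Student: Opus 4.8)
The plan is to exhibit an injection from the set of infinitesimal types over $\Cc$ into the power set of the set of germs at $0$; since the latter set is bounded by Theorem~\ref{few-germs}, the former set will be bounded as well.

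Here an infinitesimal type over $\Cc$ is a complete $1$-type $p$ over $\Cc$ concentrating on the $\Cc$-infinitesimals, so in particular $p \vdash x \in Z \mininf Z$ for every infinite $\Cc$-definable $Z$. The first step --- and the only one with any content --- is the observation that whether a $\Cc$-definable set $X$ lies in $p$ depends only on the germ of $X$ at $0$. Suppose $X$ and $Y$ have the same germ at $0$, that is, $0 \notin \overline{X \Delta Y}$. By Corollary~\ref{infinitesimal-sums} the basic neighborhoods $Z \mininf Z$, with $Z$ infinite and $\Cc$-definable, form a neighborhood basis of $0$ in the canonical topology; hence there is such a $Z$ with $(Z \mininf Z) \cap (X \Delta Y) = \emptyset$. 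Since $p \vdash x \in Z \mininf Z$, we get $p \vdash x \notin X \Delta Y$, and therefore $X \in p \iff Y \in p$.

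Writing $[X]$ for the germ of $X$ at $0$ and $G$ for the set of all germs at $0$, it follows that $p \mapsto \{[X] : X \in p\}$ is a well-defined map from infinitesimal types over $\Cc$ into the power set of $G$. This map is injective: a complete type over $\Cc$ is determined by the collection of $\Cc$-definable sets belonging to it, and by the previous paragraph that collection can be read off from $\{[X] : X \in p\}$. Hence there are at most $2^{|G|}$ infinitesimal types over $\Cc$, and $|G|$ is bounded by Theorem~\ref{few-germs}. One may note in passing that the germs at $0$ carry a natural Boolean algebra structure (induced from $\cap$, $\cup$ and complementation, which all respect germs) and that the image of $p$ is exactly an ultrafilter on it, so in fact the infinitesimal types over $\Cc$ correspond bijectively to the ultrafilters on this Boolean algebra.

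I do not expect any real obstacle. The one point that needs a moment's care is that two sets with the same germ at $0$ can be separated by a neighborhood drawn from $\tau$ itself rather than merely by some open set --- which is precisely the statement that $\tau$ is a neighborhood basis, already established in Corollary~\ref{infinitesimal-sums}.
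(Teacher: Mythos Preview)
Your argument is correct. The paper states this as an immediate corollary of Theorem~\ref{few-germs} with no proof given, and what you wrote is exactly the intended derivation: an infinitesimal type over $\Cc$ is determined by which germs at $0$ it contains (equivalently, it is an ultrafilter on the Boolean algebra of germs), so the number of such types is bounded by $2^{|G|}$.
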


\subsection{Applications of bounded germs} \label{subsection-applications}

Using Theorem~\ref{few-germs} and Corollary~\ref{few-infinitesimals},
we can prove a number of key facts about the canonical topology.

We will repeatedly make use of the following basic observation:
\begin{observation}\label{boundary-observation}
  Let $X \subset \Cc$ be $K$-definable, and $a \in K$.  Then the
  following are (clearly) equivalent:
  \begin{enumerate}
  \item There is a $K$-infinitesimal $\epsilon$ such that $(a +
    \epsilon \in X \nLeftrightarrow a \in X)$.
  \item The type $\Sigma(x)$ asserting that $x \in I_K$ and $(a + x
    \in X \nLeftrightarrow a \in X)$ is consistent.
  \item For every $K$-definable basic neighborhood $U$, the set $a +
    U$ intersects both $X$ and $X^c := \Cc \setminus X$.
  \item $a$ is in the topological boundary of $X(K)$ within $K$.
  \end{enumerate}
  Note that the third of these conditions does not depend on $K$, in
  the sense that its truth is unchanged if we replace $K$ with an
  elementary extension $K' \succeq K$.
\end{observation}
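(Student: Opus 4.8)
The plan is to prove the four conditions equivalent by the short cycle $(1)\Leftrightarrow(2)\Leftrightarrow(3)\Leftrightarrow(4)$, unwinding the definitions and invoking only the saturation of $\Cc$, the filteredness of $\tau$ from Proposition~\ref{basic-infs}, and the description of the canonical topology in Corollary~\ref{infinitesimal-sums}.

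For $(1)\Leftrightarrow(2)$ I would observe that an element $\epsilon \in \Cc$ realizes $\Sigma(x)$ precisely when $\epsilon \in I_K$ (i.e.\ $\epsilon$ is a $K$-infinitesimal) and $(a+\epsilon \in X)\nLeftrightarrow(a \in X)$; since $\Sigma(x)$ is a partial type over the small set $K$ and $\Cc$ is saturated, having such a realization is the same as being consistent, so $(1)$ and $(2)$ literally say the same thing.

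For $(2)\Leftrightarrow(3)$: by the description of $I_K$ following Proposition~\ref{basic-infs}, the condition $x \in I_K$ is equivalent to the partial type $\{x \in U : U = X'\mininf X',\ X' \text{ infinite and } K\text{-definable}\}$, so $\Sigma(x)$ is this collection together with the single formula $\theta(x)$ saying $(a+x\in X)\nLeftrightarrow(a\in X)$. Since $\tau$ is filtered, any finite subset of $\Sigma(x)$ follows from some $\{x \in U,\ \theta(x)\}$ with $U \in \tau$, so $\Sigma(x)$ is consistent iff for every $K$-definable basic neighborhood $U$ there is $x \in U$ with $\theta(x)$. If $a \in X$, the statement ``$\exists x \in U : \theta(x)$'' says $a + U \not\subseteq X$, i.e.\ $(a+U)\cap X^c \ne \emptyset$, whereas $(a+U)\cap X \ne \emptyset$ holds automatically because $0 \in U$ (as $0 \in X'\mininf X'$ for infinite $X'$) and $a \in X$; the case $a \notin X$ is symmetric. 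Running over all $U$, this is exactly $(3)$.

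For $(3)\Leftrightarrow(4)$: by Corollary~\ref{infinitesimal-sums} the $K$-definable basic neighborhoods form a neighborhood basis at $0$ for the canonical topology on $K$, hence the sets $a + U$ form a neighborhood basis at $a$. Because $X$, $X^c$ and $U$ are $K$-definable and $a \in K$, whether $a + U$ meets $X$ (respectively $X^c$) is a first-order statement with parameters in $K$, so it holds in $\Cc$ iff it holds in $K$ iff $(a+U)\cap K$ meets $X(K)$ (respectively $X^c(K)$), and likewise iff it holds in any $K' \succeq K$ --- which is the content of the closing remark. Thus $(3)$ says precisely that every basic neighborhood of $a$ in $K$ meets both $X(K)$ and its complement, i.e.\ $(4)$. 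I expect no real obstacle here, since the statement is pure bookkeeping; the only points needing attention are the case split $a \in X$ versus $a \notin X$ when converting $\theta(x)$ into a containment statement about $a+U$, and the use of $K \preceq \Cc$ to see that condition $(3)$ is independent of the ambient model.
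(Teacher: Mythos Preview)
The paper gives no proof of this observation --- it is simply declared ``clearly'' true --- so your proposal is necessarily more detailed than anything in the paper, and your cycle $(1)\Leftrightarrow(2)\Leftrightarrow(3)\Leftrightarrow(4)$ via saturation, filteredness of $\tau$, and elementarity is exactly the intended unwinding.

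One small imprecision: your handling of the closing remark only establishes that, for a \emph{fixed} $K$-definable basic neighborhood $U$, the statement ``$a+U$ meets $X$ and $X^c$'' is model-independent.  But replacing $K$ by $K'\succeq K$ in condition~(3) enlarges the range of $U$'s to all $K'$-definable basic neighborhoods, and you have not said why this does not change the answer.  One direction is trivial (more $U$'s makes the condition harder).  For the other, fix a formula $\psi(x;y)$ and note that the sentence
\[
\forall c\ \bigl(\,\psi(\Cc;c)\text{ infinite}\ \rightarrow\ (a + (\psi(\Cc;c)\mininf\psi(\Cc;c)))\text{ meets both }X\text{ and }X^c\,\bigr)
\]
has all its parameters in $K$; condition~(3) for $K$ says it holds in $K$, hence by $K\preceq\Cc$ it holds in $\Cc$, hence for $c\in K'$.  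Ranging over all $\psi$ gives (3) for $K'$.  This is the same elementarity idea you already used, just applied to the universally-quantified statement rather than to a single $U$.
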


First we show that definable sets have finite boundaries.
\begin{proposition}
  \label{finite-boundary}
  If $X \subset K$ is definable, then $\partial X$ is finite, and
  contained in $\acl(\ulcorner X \urcorner)$.
\end{proposition}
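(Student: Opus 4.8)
The plan is to deduce finiteness of $\partial X$ from the bound on the number of germs at $0$ (Theorem~\ref{few-germs}), exactly as one expects: if $\partial X(K)$ were infinite, we could produce unboundedly many distinct germs at $0$ by translating $X$ to each boundary point. First I would reduce to working with $X$ definable over a small model $K$ with $\ulcorner X \urcorner \in K$, and recall from Observation~\ref{boundary-observation} that $a \in \partial X(K)$ iff there is a $K$-infinitesimal $\epsilon$ with $(a + \epsilon \in X) \nLeftrightarrow (a \in X)$; equivalently, the translated set $(X - a)$ has a point of its boundary at $0$, i.e. $0 \in \overline{(X-a) \Delta \{a \in X \text{-value}\}}$—more simply, $0 \in \overline{(X - a)} \cap \overline{(X-a)^c}$.

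Now suppose toward contradiction that $\partial X(K)$ is infinite. By Corollary~\ref{external-infinite} applied to the (internally!) definable set $\partial X(K)$—wait, $\partial X$ is internally definable since infinity is eliminable, so I can just take an infinite $K$-definable $B \subseteq \partial X(K)$ directly, or pass to a saturated extension. Take an indiscernible sequence $a_1, a_2, \ldots$ of distinct elements of $\partial X$ over $\ulcorner X \urcorner$ (Morley–Erdős–Rado). Consider the translated sets $X - a_i$; each has $0$ in both its closure and the closure of its complement. The key point is that for $i \neq j$ the sets $X - a_i$ and $X - a_j$ have \emph{different} germs at $0$: indeed, $(X - a_i) \Delta (X - a_j)$ is a translate of $X \Delta (X + (a_i - a_j))$, and since $a_i - a_j \neq 0$ is not a $K$-infinitesimal (it lies in $K^\times$, which misses $I_K$ by Proposition~\ref{basic-infs}(2)), one checks that... actually the cleanest route is: the germ at $0$ of $X - a_i$ is determined by $\tp(a_i / \ulcorner X \urcorner)$ together with the infinitesimal type "approaching $a_i$ from inside/outside $X$", and distinct $a_i$ in a definable family cannot all share a germ without contradicting Theorem~\ref{few-germs}. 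I would make this precise by arguing that the map $a \mapsto (\text{germ at } 0 \text{ of } X - a)$ restricted to an indiscernible sequence of boundary points is injective, so an infinite boundary yields infinitely (indeed boundedly-many-violating) many germs, contradicting Theorem~\ref{few-germs}.

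The main obstacle is precisely establishing that \emph{distinct} boundary points give \emph{distinct} germs—a priori two different boundary points $a \neq a'$ could yield $X - a$ and $X - a'$ with the same germ at $0$. The resolution: if $X - a$ and $X-a'$ had the same germ at $0$, then $0 \notin \overline{(X-a)\Delta(X-a')}$, i.e. $a - a' \notin \overline{X \Delta (X + (a'-a))}$... this needs the fact that $X$ cannot be "translation-invariant modulo a finite/small set" near too many points. A clean way around the obstacle entirely: don't demand injectivity on the nose. Instead, use that $\partial X(K)$ infinite and definable gives, via the indiscernible sequence, that the family $\{X - a_i\}$ is an infinite definable family all having $0$ in the closure of both the set and its complement; then invoke that there are boundedly many germs to find $i \neq j$ with $X - a_i, X - a_j$ having the same germ, and derive a contradiction from indiscernibility plus the structure of $I_K$ (the difference $a_i - a_j$ ranges over infinitely many non-infinitesimal values as $i,j$ vary, forcing genuinely different germs). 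Finally, for the containment $\partial X \subseteq \acl(\ulcorner X \urcorner)$: this is immediate once finiteness is known, since $\partial X$ is $\ulcorner X \urcorner$-definable and finite, hence every element is algebraic over $\ulcorner X \urcorner$.
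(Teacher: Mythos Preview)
Your proposal has a genuine gap at the crucial step. You correctly identify Theorem~\ref{few-germs} as the engine, but your plan---show that distinct boundary points $a_i$ yield distinct germs of $X - a_i$ at $0$---is never carried out, and as you yourself note, there is no reason \emph{a priori} that the map $a \mapsto (\text{germ of } X - a)$ is injective on $\partial X$. Your fallback (``find $i \neq j$ with the same germ and derive a contradiction from indiscernibility'') is not an argument: having $X - a_i$ and $X - a_j$ agree on a neighborhood of $0$ is not visibly inconsistent with anything, and the phrase about $a_i - a_j$ ``ranging over non-infinitesimal values, forcing genuinely different germs'' is an assertion, not a proof. A minor additional error: $\partial X$ is only \emph{type}-definable at this point in the paper (condition (3) of Observation~\ref{boundary-observation} is an infinite conjunction over basic neighborhoods), not definable---the definable basis only appears later, in Corollary~\ref{definable-topology}---so your ``internally definable since infinity is eliminable'' remark is wrong.

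The paper's proof uses a different decomposition. Instead of fibering over boundary points, it fibers over infinitesimals:
\[
\partial X(\Cc) \;=\; \bigcup_{\epsilon \in I_{\Cc}} D_\epsilon, \qquad D_\epsilon := \{x \in \Cc : (x + \epsilon \in X) \nLeftrightarrow (x \in X)\}.
\]
The key input you never invoke is Proposition~\ref{slightness}(1): translation by an infinitesimal is $K$-slight, so each $D_\epsilon$ is \emph{finite}. Since $D_\epsilon$ depends only on $\tp(\epsilon/\Cc)$, and there are only boundedly many infinitesimal types (Corollary~\ref{few-infinitesimals}), $\partial X$ is a bounded union of finite sets, hence small. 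As $\partial X$ is type-definable over $\ulcorner X \urcorner$, smallness gives finiteness and containment in $\acl(\ulcorner X \urcorner)$. If you attempt to repair your route---say, taking a $\Cc$-infinitesimal $\epsilon$ lying in the common germ and observing that all the $a_i$ would then land in $D_\epsilon$---you will find that the contradiction comes precisely from the finiteness of $D_\epsilon$, i.e.\ from Proposition~\ref{slightness}, so the paper's argument is really the direct version of what you are circling around.
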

\begin{proof}
  By Observation~\ref{boundary-observation}, we may replace $K$ with
  $\Cc$---this only makes $\partial X$ get bigger.

  The set $\partial X$ is type-definable, essentially by (3) of
  Observation~\ref{boundary-observation}.  It is also type-definable
  over $\dcl(\ulcorner X \urcorner)$, by automorphism invariance of
  the topology.  The proposition will therefore follow if $\partial X$
  is small.

  Let $\Cc^*$ be a sufficiently saturated elementary extension of
  $\Cc$.  By the equivalence of conditions 1 and 4 of
  Observation~\ref{boundary-observation},
  \begin{equation} \label{eq:1}
    \partial X(\Cc) = \bigcup_{\epsilon \in I_{\Cc}} \{x \in \Cc : x +
    \epsilon \in X \nLeftrightarrow x \in X\}
  \end{equation}
  Let $D_\epsilon$ denote $\{x \in \Cc : x + \epsilon \in X \nLeftrightarrow
  x \in X\}$.  By the first part of Proposition~\ref{slightness}, each
  $D_\epsilon$ is finite.  Moreover, $D_\epsilon$ depends only on
  $\tp(\epsilon/\Cc)$.  By Corollary~\ref{few-infinitesimals}, it
  follows that the right hand side of (\ref{eq:1}) is small.
\end{proof}

\begin{proposition}\label{ring-topology}
  The set $I_K$ of $K$-infinitesimals is closed under multiplication.
  Consequently, conditions \ref{t1}-\ref{t5} of \S \ref{sec:filters}
  hold and the canonical topology on $K$ is a ring topology.
\end{proposition}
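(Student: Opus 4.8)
The plan is to show that for any $K$-infinitesimals $\epsilon, \delta$, the product $\epsilon \delta$ is again a $K$-infinitesimal; the remaining claims then follow immediately from Corollary~\ref{infinitesimal-sums} and the translation table of \S\ref{sec:filters}. As with the additive case, the right framework is to think multiplicatively: for a fixed nonzero $\delta$, the map $x \mapsto \delta \cdot x$ is a $\Cc$-definable bijection, so it suffices to show that multiplication by a $K$-infinitesimal $\delta$ (more precisely, the map $x \mapsto x + \delta x$, i.e. $x \mapsto (1+\delta)x$) carries $K$-infinitesimals to $K$-infinitesimals. But that is not quite the cleanest phrasing either; instead I would argue directly at the level of basic neighborhoods, using the finite-boundary result Proposition~\ref{finite-boundary} in place of the NIP/heir machinery of \S\ref{subsection-applications}.

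Concretely: fix $\epsilon \in I_K$, and let $X \subset \Cc$ be an arbitrary infinite $K$-definable set; I must show $\epsilon \delta \in X \mininf X$, i.e. that $X \cap (X - \epsilon\delta)$ is infinite, for every $K$-infinitesimal $\delta$. First pass to $\Cc$ as the model (this is harmless since $I_K \supseteq I_\Cc$, and it is really $I_\Cc$-infinitesimality we want for the product; for the statement over $K$ one then applies finite satisfiability / the fact that $\tau$ on $K$ and on $\Cc$ induce the same-shaped neighborhoods, exactly as in the Claim inside Proposition~\ref{pseudo-basis}). Over $\Cc$, consider the set $Y = \{x \in \Cc : x \in X \nLeftrightarrow x + \epsilon x \in X\}$ — more usefully, stratify by looking at $\{x : x + \epsilon \delta \in X \nLeftrightarrow x \in X\}$ and compare it to the finite boundary $\partial X$. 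The key point is that $\epsilon \delta$ is ``smaller'' than $\epsilon$ in a sense controlled by $\delta \in I_K$: by Observation~\ref{funny-comparison} and condition \ref{t4}, $\delta \cdot I_K \subseteq I_K$, so $\epsilon \delta \in I_K$ follows once we know $I_K$ is closed under multiplication by \emph{each fixed} $K$-infinitesimal regarded as a scalar — but that fixed scalar lies in $\Cc$, not $K$, which is exactly the gap. So the real content is: if $\delta$ is a $\Cc$-infinitesimal and $\epsilon$ is a $\Cc$-infinitesimal, then $\epsilon\delta$ is a $\Cc$-infinitesimal. I would prove this by the germ-counting method: suppose $\epsilon \delta \notin X \mininf X$ for some infinite definable $X$; run an heir sequence $(\epsilon_i, \delta_i)$ over a tower of models $K_1 \preceq K_2 \preceq \cdots$, and build sets $X_{w}$ indexed by $w \in \{0,1\}^{<\omega}$ recording which of the ``shifts by $\epsilon_i \delta_i$'' land back in $X$, exactly mirroring Lemma~\ref{nip-app}; the slightness of each individual multiplication-by-$(1+\epsilon_i)$ and translation-by-$\epsilon_i\delta_i$ map (Proposition~\ref{slightness}, Corollary~\ref{infinitesimal-sums}) feeds the inductive step, and an infinite such tree contradicts NIP.

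I expect the main obstacle to be precisely the passage between $K$ and $\Cc$ and the bookkeeping of which object needs to be infinitesimal over which model: multiplication introduces a factor $\delta$ that is not in the small model $K$, so one cannot argue as bluntly as in the additive case. The cleanest route is probably to first establish the statement for $\Cc$-infinitesimals (where both factors and all parameters can be taken in $\Cc$, which is itself ``a model just like $K$''), using the slight-map group structure from Proposition~\ref{slightness} applied to the maps $x \mapsto (1+\epsilon)x$ — these are $\Cc$-slight because $(1+\epsilon)X \cap X$ is infinite whenever $\epsilon \in I_\Cc$, which is just a multiplicative avatar of infinitesimality after rescaling — and then compose: $(1+\epsilon)(1+\delta) = 1 + \epsilon + \delta + \epsilon\delta$, so since multiplication by $(1+\epsilon)$, by $(1+\delta)$, and hence by their composite is slight, and translation by $\epsilon + \delta$ is slight (additive closure), the leftover translation by $\epsilon\delta$ must be slight by the group property, giving $\epsilon \delta \in I_\Cc$. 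Descending from $I_\Cc$ back to $I_K$ then uses that $I_K \cap \text{(elements of $\Cc$ realizing $\Cc$-infinitesimal types)}$ behaves well — concretely, $\epsilon, \delta \in I_K \subseteq$ the $\Cc$-infinitesimals after one moves to a larger monster, so $\epsilon\delta$ is a $\Cc$-infinitesimal there, hence a $K$-infinitesimal. Once $I_K$ is closed under multiplication, conditions \ref{t1}--\ref{t5} are in hand and \S\ref{sec:filters} gives that the canonical topology is a ring topology.
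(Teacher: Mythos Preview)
Your proposal has the right ingredients---slightness of the maps $x \mapsto (1+e)x$ and the group property of slight maps from Proposition~\ref{slightness}---but the way you combine them contains a genuine error, and your justification for the slightness claim is incomplete.

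The error is in your composition step. You write $(1+\epsilon)(1+\delta) = 1 + \epsilon + \delta + \epsilon\delta$ and conclude that multiplication by this element is slight; so far so good. But then you say ``translation by $\epsilon + \delta$ is slight, so the leftover translation by $\epsilon\delta$ must be slight by the group property.'' This conflates two different operations: multiplication by $1+\eta$ is the map $x \mapsto x + \eta x$, not the translation $x \mapsto x + \eta$. Knowing that $x \mapsto (1+\eta)x$ is slight (for $\eta = \epsilon + \delta + \epsilon\delta$) does not let you peel off a translation by $\epsilon+\delta$ to isolate a translation by $\epsilon\delta$. The paper fixes this by using \emph{conjugation} rather than composition of the two scaling maps: one computes
\[
  (1+e)\left(\frac{x}{1+e} + \epsilon\right) - \epsilon \;=\; x + e\epsilon,
\]
which is genuinely a translation, hence slight, hence $e\epsilon \in I_K$.

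Second, your argument that $x \mapsto (1+\epsilon)x$ is slight (``a multiplicative avatar of infinitesimality after rescaling'') is where the real content lies, and hand-waving it away skips the crux. The paper's argument is: for $a \in X(K) \setminus \partial X$, the product $e \cdot a$ is a $K$-infinitesimal \emph{because $a \in K$} (condition~\ref{t4}, already established in Proposition~\ref{basic-infs}), so $a + ea \in X$ by Observation~\ref{boundary-observation}; this shows $X(K)\setminus \partial X \subseteq X \cap (1+e)^{-1}X$, and finiteness of $\partial X$ (Proposition~\ref{finite-boundary}) makes the right-hand side infinite. Note that this works directly over $K$---there is no need to pass to $\Cc$-infinitesimals or worry about which monster you are in, so the entire $K$-versus-$\Cc$ detour in your second and third paragraphs is unnecessary.
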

\begin{proof}
  Suppose $\epsilon$ and $e$ are $K$-infinitesimals.
  \begin{claim}
    The map $x \mapsto x \cdot (1 + e)$ is $K$-slight.
  \end{claim}
  \begin{proof}
    Let $X$ be an infinite $K$-definable set; we will show that $X
    \cap (1+e)^{-1}X$ is infinite.  In fact, it contains $X(K)
    \setminus \partial X$, which is infinite by
    Proposition~\ref{finite-boundary}.  To see this, suppose $a \in
    X(K) \setminus \partial X$.  Then $e \cdot a$ is $K$-infinitesimal
    by Proposition~\ref{basic-infs}.  By the equivalence of 1 and 4 in
    Observation~\ref{boundary-observation} and the fact that $a \notin
    \partial X$, it follows that $a + e \cdot a \in X$, so $a \in X
    \cap (1 + e)^{-1}X$.
  \end{proof}
  The $K$-slight maps are closed under composition and inverses, by
  Proposition~\ref{slightness}.  Applying this to the $K$-slight maps
  $x \mapsto (1+e)x$ and $x \mapsto x + \epsilon$, we see that the map
  \begin{equation*}
    x \mapsto (1+e)\left(\frac{x}{1+e} + \epsilon\right) - \epsilon = x + e
    \cdot \epsilon
  \end{equation*}
  is also $K$-slight, so $e \cdot \epsilon$ is a $K$-infinitesimal.
\end{proof}

\begin{lemma}
  \label{G00}
  As a subgroup of the additive group, $I_K$ has no type-definable
  proper subgroups of bounded index.
\end{lemma}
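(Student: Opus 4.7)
By the existence of $G^{00}$ for type-definable groups in NIP theories, the additive group $I_K$ admits a smallest type-definable subgroup of bounded index, which I denote $I_K^{00}$; it suffices to show $I_K^{00}=I_K$, since any type-definable proper subgroup of bounded index would contain it.

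I first claim that $I_K^{00}$ is invariant under multiplication by $K^\times$. For $a\in K^\times$, the map $x\mapsto ax$ is a $K$-definable automorphism of $(I_K,+)$ by closure of $I_K$ under $K^\times$-multiplication (Corollary~\ref{infinitesimal-sums}). Hence $a\cdot I_K^{00}$ is another type-definable subgroup of $I_K$ of the same bounded index, and by minimality of $I_K^{00}$ we get $a\cdot I_K^{00}\supseteq I_K^{00}$; the same argument applied to $a^{-1}$ yields equality. So $I_K^{00}$ is a $K$-vector subspace of $I_K$, and if $I_K^{00}\ne I_K$ then picking $\epsilon\in I_K\setminus I_K^{00}$, the map $a\mapsto a\epsilon+I_K^{00}$ embeds $K$ into $I_K/I_K^{00}$ (because $(a-a')\epsilon\in I_K^{00}$ with $a\ne a'$ would give $\epsilon\in(a-a')^{-1}I_K^{00}=I_K^{00}$), yielding $[I_K:I_K^{00}]\geq|K|$.

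On the other hand, $[I_K:I_K^{00}]$ is bounded by an absolute cardinal coming from Corollary~\ref{few-infinitesimals}. By the NIP identity $I_K^{00}=I_K^{000}$ (valid also for type-definable groups), cosets of $I_K^{00}$ correspond to Lascar strong type classes over $K$; since $K$ is a model, Lascar strong types over $K$ coincide with ordinary types, so the cosets inject into the set of complete infinitesimal types over $K$. A compactness argument extends any $K$-infinitesimal type $q$ over $K$ to a $\Cc$-infinitesimal type over $\Cc$: the partial type $q(x)\cup\{x\in Z\mininf Z: Z\text{ infinite and } \Cc\text{-definable}\}$ is finitely satisfiable thanks to the neighborhood basis structure of the canonical topology (any finite intersection of basic $\Cc$-definable neighborhoods contains a smaller basic neighborhood, which then has infinite intersection with any realization of $q$). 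Hence the number of $K$-infinitesimal types is at most the number of $\Cc$-infinitesimal types, and the latter is bounded absolutely by Corollary~\ref{few-infinitesimals}.

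Taking $K$ of cardinality exceeding this absolute bound---admissible by the sufficient saturation of $\Cc$---the two estimates for $[I_K:I_K^{00}]$ contradict, forcing $I_K^{00}=I_K$. The main obstacle lies in the third paragraph: both invoking the NIP identity $G^{00}=G^{000}$ for type-definable groups and carrying out the finite-satisfiability argument that produces $\Cc$-infinitesimal extensions of arbitrary $K$-infinitesimal types, which is where the specific structure of the canonical topology and Corollary~\ref{few-infinitesimals} enter.
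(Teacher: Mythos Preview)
Your approach is genuinely different from the paper's and, in outline, works --- but the crucial extension step is not justified by what you wrote. You claim that every $K$-infinitesimal type $q$ over $K$ extends to a $\Cc$-infinitesimal type over $\Cc$, arguing that any basic $\Cc$-neighborhood ``has infinite intersection with any realization of $q$.'' This is incoherent as stated: a realization of $q$ is a single point, and there is no reason the realization set $q(\Cc)$ should meet an arbitrary $\Cc$-definable basic neighborhood $U$; you have only that $q(\Cc)\subseteq I_K$, while $U$ is a much smaller set. The claim is nonetheless \emph{true}, but the proof is exactly Lemma~\ref{heir}: take an heir of $q$ to $\Cc$ (in a larger monster), and observe that heirs of translation-by-$K$-infinitesimal maps remain slight, so the heir is a $\Cc$-infinitesimal type extending $q$. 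Once you patch this, the rest goes through. (Incidentally, you do not need the NIP identity $G^{00}=G^{000}$: since $K$ is a model, same type over $K$ implies same Lascar strong type over $K$, which implies same coset of $I_K^{000}\subseteq I_K^{00}$ automatically.)

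By contrast, the paper's proof is short and direct, using the same heir ingredient but avoiding the cardinality detour entirely. It picks $\epsilon\in I_K\setminus I_K^{00}$, passes to a model $K'\ni\epsilon$, takes an heir $\epsilon'$ of $\tp(\epsilon/K)$ over $K'$ (so $\epsilon'$ is $K'$-infinitesimal by Lemma~\ref{heir}), notes $\epsilon\equiv^{\mathrm{Lstp}}_K\epsilon'$ so $\epsilon-\epsilon'\in I_K^{00}$, and then any $K$-definable $X$ separating $\epsilon$ from $\epsilon-\epsilon'$ puts $\epsilon\in\partial X\subseteq K$ by Proposition~\ref{finite-boundary}, a contradiction. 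Your argument trades this local boundary contradiction for a global counting one, which is conceptually nice (it shows \emph{why} bounded germs force $I_K=I_K^{00}$: the quotient is a $K$-vector space of size at most $2^{\#\text{germs}}$), but it still rests on the same heir lemma at the critical point.
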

\begin{proof}
  By Proposition 6.1 in \cite{goo}, $I_K^{00}$ exists and is
  type-definable over $K$; we will show $I_K^{00} = I_K$.  Suppose for
  the sake of contradiction that there is $\epsilon \in I_K \setminus
  I_K^{00}$.  Let $K'$ be a model containing $\epsilon$, and let
  $\epsilon'$ realize an heir of $\tp(\epsilon/K)$ to $K'$.  By
  Lemma~\ref{heir}, $\epsilon'$ is $K'$-infinitesimal.

  As $\epsilon$ and $\epsilon'$ have the same (Lascar strong) type
  over $K$, they are in the same coset of $I_K^{00}$.  Then $\epsilon$
  and $\epsilon - \epsilon'$ do \emph{not} have the same type over
  $K$, because the latter is in $I_K^{00}$ but the former is not.
  Choose a $K$-definable set $X$ which contains $\epsilon$ but not
  $\epsilon - \epsilon'$.  As $X$ is $K'$-definable and $\epsilon \in
  K'$, it follows by Observation~\ref{boundary-observation} that
  $\epsilon \in \partial X$.  Then by
  Proposition~\ref{finite-boundary}, $\epsilon \in \acl(\ulcorner X
  \urcorner) \subseteq K$, which is absurd, since $\epsilon$ is a
  non-zero $K$-infinitesimal.
\end{proof}

\begin{theorem}\label{v-top}
  The canonical topology on $K$ is a V-topology.  The set $I_K$
  satisfies conditions \ref{t1}-\ref{t7} of \S\ref{sec:filters}, and
  is the maximal ideal of a valuation ring $\mathcal{O}_K$ on $\Cc$.
\end{theorem}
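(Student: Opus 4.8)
The plan is to verify the remaining filter conditions for $I_K$. By Proposition~\ref{ring-topology}, conditions \ref{t1}--\ref{t5} already hold, so the canonical topology is a ring topology; what is missing is condition \ref{t7} (the complement $\Cc \setminus I_K$ is closed under multiplication), after which Observation~\ref{condition-7} immediately yields condition \ref{t6}, the V-topology claim, and the valuation ring $\mathcal{O}_K$. So everything reduces to: if $\epsilon, e \in \Cc$ and $\epsilon \cdot e \in I_K$, then $\epsilon \in I_K$ or $e \in I_K$. Equivalently, the set of non-infinitesimals is multiplicatively closed.

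First I would reduce the problem using what is already available. Suppose $\epsilon e \in I_K$ but neither $\epsilon$ nor $e$ is in $I_K$; after passing to a larger base model we may assume $\epsilon, e$ live over a small model. The natural move is to consider $I_K$ together with the scaled family $\epsilon \cdot I_K$, or to compare $I_K$ against the type-definable set generated by the basic neighborhoods $X \mininf X$ after multiplying through by $\epsilon$. Observation~\ref{funny-comparison} is tailor-made for this: if $a \cdot I_\tau \subseteq I_{\tau'}$ then $I_\tau \subseteq I_{\tau'}$. The idea is to build, from a non-infinitesimal $\epsilon$, a genuinely \emph{different} type-definable set satisfying conditions \ref{f1}--\ref{f4} and derive a contradiction with the boundedness results.

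The key leverage is Lemma~\ref{G00}: $I_K$ has no proper type-definable subgroup of bounded index, hence $I_K$ is its own $G^{00}$, and this rigidity should force multiplicative structure. Concretely, the intersection $I_K \cap \epsilon^{-1} I_K$ is a type-definable \emph{additive} subgroup of $I_K$ (both are additive subgroups closed under $K$-scaling), and by Corollary~\ref{few-infinitesimals} — only boundedly many infinitesimal types over $\Cc$ — it has bounded index in $I_K$; by Lemma~\ref{G00} it must equal $I_K$, i.e. $I_K \subseteq \epsilon^{-1} I_K$, so $\epsilon \cdot I_K \subseteq I_K$. But then $\epsilon \in \mathcal{O}$ in the notation of Observation~\ref{condition-7}'s proof, which one checks is incompatible with $\epsilon$ being a non-infinitesimal unless one is careful — so one must instead run the symmetric argument on $e$ as well and play $\epsilon \cdot I_K \subseteq I_K$ against $e \cdot I_K \subseteq I_K$ and $(\epsilon e) \cdot I_K \subseteq I_K$ using $\epsilon e \in I_K$ to reach the contradiction via condition \ref{t2} ($I_K \cap K^\times = \emptyset$, hence $I_K$ contains no units of $\mathcal{O}$).

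The main obstacle I anticipate is the bounded-index claim: showing that $I_K \cap \epsilon^{-1} I_K$ really does have bounded index in $I_K$ when $\epsilon \notin I_K$. Naively the index could be unbounded. The fix should be that a coset of $I_K \cap \epsilon^{-1} I_K$ inside $I_K$ is determined by the pair $(\tp(\delta/\Cc), \tp(\epsilon\delta/\Cc))$ for $\delta$ in the coset, and since $\delta$ and (by Proposition~\ref{ring-topology}) $\epsilon\delta$ range over infinitesimals, Corollary~\ref{few-infinitesimals} bounds the number of such pairs — \emph{provided} $\epsilon \delta$ is always infinitesimal, which requires knowing $\epsilon \cdot I_K \subseteq I_K$, exactly what we are trying to prove. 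Breaking this circularity is the crux: I would instead argue with $\delta$ ranging over $I_K \cap \epsilon \cdot I_K$ (automatically inside $I_K$, and with $\epsilon^{-1}\delta \in I_K$ as well), show \emph{that} set has bounded index in $I_K$ by a counting-of-types argument that stays inside the infinitesimals, conclude it equals $I_K$ by Lemma~\ref{G00}, hence $I_K \subseteq \epsilon \cdot I_K$ and symmetrically $I_K \subseteq \epsilon^{-1} I_K$, giving $\epsilon \cdot I_K = I_K$; doing this for a non-infinitesimal $\epsilon$ then contradicts $I_K \cap K^\times = \emptyset$ after absorbing $\epsilon$ into an element of $\mathcal{O}^\times$.
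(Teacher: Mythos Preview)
Your reduction is correct: by Proposition~\ref{ring-topology} and Observation~\ref{condition-7}, only condition~\ref{t7} remains, and you have correctly identified Lemma~\ref{G00} and Observation~\ref{funny-comparison} as the relevant tools. However, the core step --- showing that $I_K \cap \epsilon \cdot I_K$ has bounded index in $I_K$ via a count of infinitesimal types --- does not go through, and your proposed fix does not break the circularity you yourself flagged.

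The problem is this. Two elements $\delta_1,\delta_2 \in I_K$ lie in the same coset of $I_K \cap \epsilon \cdot I_K$ exactly when $\epsilon^{-1}(\delta_1-\delta_2)\in I_K$, i.e.\ when $\epsilon^{-1}\delta_1$ and $\epsilon^{-1}\delta_2$ lie in the same coset of $I_K$ in $\Cc$. So the coset of $\delta$ is governed by the image of $\epsilon^{-1}\delta$ in $\Cc/I_K$, and there is no reason for $\epsilon^{-1}\delta$ to be an infinitesimal; Corollary~\ref{few-infinitesimals} gives you no handle on it. Knowing $\tp(\delta/\Cc)$ (or even $\tp(\delta,\epsilon\delta/\Cc)$) does not determine the coset: elements of the same global type need not differ by an element of $\epsilon\cdot I_K$. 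Restricting attention to $\delta$ with $\epsilon^{-1}\delta\in I_K$ just means you are looking at the subgroup itself, not at its cosets in $I_K$, so that does not help bound the index either. Finally, even granting $\epsilon\cdot I_K = I_K$ for all non-infinitesimal $\epsilon$, this is the \emph{conclusion} you want (it says non-infinitesimals are units of the ring $R=\{a:a\cdot I_K\subseteq I_K\}$, hence $I_K$ is its maximal ideal), not a contradiction; the last sentence of your plan is tangled.

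The paper supplies the missing idea as a short general lemma about dp-minimal groups: if $G,H\le(\Cc,+)$ are type-definable with $G=G^{00}$ and $H=H^{00}$, then $G\subseteq H$ or $H\subseteq G$. This is proved directly from the mutually-indiscernible characterisation of dp-rank~$1$ (pick representatives of distinct $G\cap H$-cosets in $G$ and in $H$ and look at a suitable sum). Applying it to $I_K$ and $a\cdot I_K$ (both equal to their own $00$ by Lemma~\ref{G00}) shows they are comparable for every $a$, so $R$ is a valuation ring containing $K$ and $I_K$. The endgame then uses Observation~\ref{funny-comparison}, but not on $\epsilon\cdot I_K$: one sets $J_K=\{x:x^2\in I_K\}$, checks $J_K$ is a proper $R$-ideal satisfying \ref{t1}--\ref{t4}, and since any nonzero $a\in I_K$ gives $a\cdot J_K\subseteq I_K$, Observation~\ref{funny-comparison} forces $J_K\subseteq I_K$, which is exactly condition~\ref{t7}.
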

\begin{proof}
  By Observation~\ref{condition-7} and
  Proposition~\ref{ring-topology}, it suffices to show that the
  complement of $I_K$ is closed under multiplication (condition
  \ref{t7} of \S \ref{sec:filters}).

  First we prove a general fact about dp-minimal groups.
  \begin{claim}\label{4.17}
    Suppose $G$ and $H$ are type-definable subgroups of
    $(K,+)$, such that $G = G^{00}$ and $H = H^{00}$.  Then $G
    \subseteq H$ or $H \subseteq G$.
  \end{claim}
  \begin{proof}
    Otherwise, $G \cap H$ has unbounded index in both $G$ and $H$.  By
    Morley-Erd\"os-Rado we can produce an indiscernible sequence
    $\langle (a_i, b_i) \rangle_{i < \omega + \omega}$ of elements of
    $G \times H$ such that the $a_i$ are in pairwise distinct cosets
    of $G \cap H$, and the $b_i$ are in pairwise distinct cosets of $G
    \cap H$.  The sequences $a_0, a_1, \ldots$ and $b_{\omega},
    b_{\omega + 1}, \ldots$ are mutually indiscernible.  However,
    after naming $c := a_0 + b_\omega$, neither sequence is
    indiscernible.  Indeed, $c - a_i \in H$ if and only if $i = 0$,
    and $b_{\omega + i} - c \in G$ if and only if $i = 0$.  This
    contradicts the characterization of dp-rank 1 in terms of mutually
    indiscernible sequences.
  \end{proof}

  Now, let $R$ be the set of $a \in \Cc$ such that $a \cdot I_K
  \subseteq I_K$.  Observe:
  \begin{enumerate}
  \item $R$ is closed under multiplication, trivially.
  \item $R$ is closed under addition and subtraction, because $I_K$ is
    closed under addition and subtraction.  So $R$ is a ring.
  \item $R$ is a valuation ring in $\Cc$: for any $a \in \Cc^\times$,
    the groups $a \cdot I_K$ and $I_K$ are comparable by
    Claim~\ref{4.17} and Lemma~\ref{G00}.  If $a \cdot I_K \subseteq
    I_K$, then $a \in R$, and if $I_K \subseteq a \cdot I_K$, then
    $1/a \in R$.
  \item $I_K$ is contained in $R$ (by
    Proposition~\ref{ring-topology}), so $I_K$ is an ideal in $R$.
  \item $I_K$ is a proper ideal of $R$, because $1 \notin I_K$.
  \item $K$ is contained in $R$, by Proposition~\ref{basic-infs},
    specifically the fact that $I_K$ is closed under multiplication by
    $K$, which is condition \ref{t4} of \S\ref{sec:filters}.
  \end{enumerate}
  By general facts about valuation rings, the set $J_K = \{x \in \Cc :
  x^2 \in I_K\}$ is also a proper ideal in $R$, and $\Cc \setminus
  I_K$ is closed under multiplication if and only if $I_K = J_K$.

  Because $J_K$ is a proper ideal in a superring of $K$, $J_K$
  satisfies conditions \ref{t2}-\ref{t4} of \S\ref{sec:filters}: it is
  closed under multiplication by $K$, it is closed under addition and
  subtraction, and its intersection with $K$ is $\{0\}$.  As $J_K
  \supseteq I_K$, it also satisfies condition \ref{t1}, the
  non-triviality condition that $J_K \supsetneq \{0\}$.

  So $J_K$ and $I_K$ both satisfy conditions \ref{t1}-\ref{t4} of
  \S\ref{sec:filters}.  Choose nonzero $a \in I_K$.  Then $a \cdot J_K
  \subseteq I_K$ because $I_K$ is an ideal.  By
  Observation~\ref{funny-comparison},
  \begin{equation*}
    a \cdot J_K \subseteq I_K \subseteq J_K
  \end{equation*}
  implies $J_K \subseteq I_K \subseteq J_K$, completing the proof.
\end{proof}

\begin{corollary} \label{definable-topology}
  The canonical topology has a definable basis of opens.  More
  precisely, there is a definable open set $B$ such that sets of the
  form $a \cdot B$ for $a \in \Cc^\times$ form a neighborhood basis of
  0, and consequently the sets of the form $a \cdot B + b$ form a
  basis for the topology.
\end{corollary}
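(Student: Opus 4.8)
The plan is to combine Theorem~\ref{v-top} with the structure theory of V-topologies in order to realize the canonical topology on $\Cc$ as the topology of an honest valuation \emph{on $\Cc$}, and then to wedge a single definable set inside the corresponding valuation ring. Since $\Cc$ is a model just like $K$, Theorem~\ref{v-top} applies with $\Cc$ in the role of $K$: the canonical topology on $\Cc$ is a V-topology. By the classification of V-topological fields, a V-topology is induced either by an absolute value or by a Krull valuation, and the absolute-value alternative is impossible here, since it would force $\Cc$ to embed into its completion, which would be the real or complex numbers, contradicting the saturation of $\Cc$. Hence the canonical topology on $\Cc$ is the topology of a valuation $w$ on $\Cc$, with valuation ring $\mathcal{O}$ and maximal ideal $\mathfrak{m}$ (neither of which need be definable); as the topology is non-discrete, $w$ is non-trivial, so $w(\Cc^\times)$ has no largest element.

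Next I would extract a definable set. The maximal ideal $\mathfrak{m} = \{x : w(x) > 0\}$ is open and contained in $\mathcal{O}$, so $\mathcal{O}$ is a neighborhood of $0$; since the basic neighborhoods $X \mininf X$ form a neighborhood basis of $0$ by Corollary~\ref{infinitesimal-sums}, there is a basic neighborhood $B_0$ with $B_0 \subseteq \mathcal{O}$. I claim $\{a B_0 : a \in \Cc^\times\}$ is a neighborhood basis of $0$. On one hand, each $aB_0$ is a neighborhood of $0$, being the image of $B_0$ under the homeomorphism $x \mapsto ax$. On the other hand, an arbitrary neighborhood $N$ of $0$ contains a ball $\{x : w(x) > \gamma\}$, and picking $a \in \Cc^\times$ with $w(a) > \gamma$ gives $a B_0 \subseteq a\mathcal{O} = \{x : w(x) \ge w(a)\} \subseteq \{x : w(x) > \gamma\} \subseteq N$. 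So the dilates of $B_0$ cofinally refine the neighborhood filter of $0$.

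Finally I would replace $B_0$ by an open set without losing definability. By the previous step and the fact that the canonical topology is a group topology, the translates $\{a B_0 + b : a \in \Cc^\times,\ b \in \Cc\}$ form a basis for the topology, so the topological interior of $B_0$ is
\[
  B = \{\, x \in \Cc : \exists a \in \Cc^\times \ \exists b \in \Cc \ ( x \in a B_0 + b \ \text{and}\ a B_0 + b \subseteq B_0 ) \,\}.
\]
This $B$ is definable (all quantifiers range over $\Cc$), is open, is contained in $B_0 \subseteq \mathcal{O}$, and contains $0$: taking $b = 0$ and any $a$ with $w(a)$ larger than some $\gamma_0$ with $\{x : w(x) > \gamma_0\} \subseteq B_0$, one gets $0 \in aB_0 \subseteq a\mathcal{O} \subseteq \{x : w(x) > \gamma_0\} \subseteq B_0$. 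Running the squeeze argument of the previous paragraph with $B$ in place of $B_0$ then shows $\{a B : a \in \Cc^\times\}$ is a neighborhood basis of $0$, and therefore $\{a B + b : a \in \Cc^\times,\ b \in \Cc\}$ is a basis for the canonical topology.

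The only real obstacle is the first step. Applied literally, Theorem~\ref{v-top} produces a valuation ring not on $\Cc$ but on a further saturated extension $\Cc^* \succeq \Cc$, and that valuation restricts trivially to $\Cc$, so its own topology on $\Cc$ is discrete and useless for this purpose; one genuinely needs the finer ``internal'' valuation on $\Cc$ itself, which is exactly where the classification of V-topologies enters. After that, everything is routine manipulation of valuation balls, together with the remark that the interior of a definable set becomes definable once a definable basis for the topology is available.
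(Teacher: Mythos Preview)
Your proof is correct but takes a different route from the paper. The paper stays entirely inside its own machinery: since $I_K$ is type-definable and $\mathcal{O}_K = \{x : x = 0 \text{ or } x^{-1} \notin I_K\}$ is $\vee$-definable over $K$, a straight compactness argument produces a $K$-definable $B$ with $I_K \subseteq B \subseteq \mathcal{O}_K$. Then for any $K$-definable neighborhood $U$ of $0$ one picks a nonzero $K$-infinitesimal $\epsilon$ and gets $\epsilon B \subseteq \epsilon \mathcal{O}_K \subseteq I_K \subseteq U$; elementarity pulls $\epsilon$ back into $K^\times$. Openness of $B$ is arranged by passing to $B^{int}$ via Proposition~\ref{finite-boundary} (finite boundary), rather than by your explicit interior formula.

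Your route instead invokes the external Kowalsky--D\"urbaum/Fleischer classification of V-topologies to manufacture an honest valuation on $\Cc$ itself, and then does ordinary valuation-ball arithmetic. This is conceptually clean and sidesteps the compactness sandwich, at the cost of importing a classical theorem the paper never otherwise uses. Two minor remarks: (i) your exclusion of the archimedean case via cardinality is fine for a sufficiently saturated $\Cc$, but is actually unnecessary---the same dilate argument works verbatim for an archimedean absolute value, since $a\cdot\{x:|x|\le 1\}=\{x:|x|\le |a|\}$ and $|\Qq^\times|$ is coinitial in $\Rr^{>0}$; (ii) the paper's argument yields a $B$ definable over the small model $K$, which is marginally stronger than the $\Cc$-definable $B$ your argument produces.
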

\begin{proof}
  Let $\mathcal{O}$ be the valuation ring whose maximal ideal is
  $I_K$.  For any $x \in \Cc^\times$,
  \begin{equation*}
    x \in \mathcal{O} \iff x^{-1} \notin I_K
  \end{equation*}
  so that $\mathcal{O}$ is $\vee$-definable, over $K$.  By
  compactness, there is some $K$-definable set $B$ such that
  \begin{equation*}
    I_K \subseteq B \subseteq \mathcal{O}
  \end{equation*}
  The inclusion $I_K \subseteq B$ means that $B$ is a neighborhood of
  0.  Replacing $B$ with $B^{int}$ (which is still $K$-definable, by
  Proposition~\ref{finite-boundary}), we may assume that $B$ is open.

  We claim that $\{a \cdot B : a \in K^\times\}$ is a neighborhood
  basis of 0 in the canonical topology on $K$.  Let $U$ be a
  $K$-definable neighborhood of 0.  Take $\epsilon$ a non-zero
  $K$-infinitesimal.  Then 
  \begin{equation*}
    \epsilon \cdot B \subseteq \epsilon \cdot \mathcal{O} \subseteq
    I_K \subseteq U
  \end{equation*}
  As $K \preceq \Cc$, there is some $a \in K^\times$ such that $a
  \cdot B \subseteq U$.  This shows that $\{a \cdot B : a \in
  K^\times\}$ is a neighborhood basis of $0$.  Throwing in translates,
  we get a basis for the topology.
\end{proof}

\begin{definition}
  A \emph{standard ball} is an open definable set $B \subset \Cc$ such
  that $\{a \cdot B : a \in \Cc^\times\}$ is a neighborhood basis of
  0.
\end{definition}

\begin{remark} \label{small-ball}
  Suppose $B$ is a $K$-definable standard ball and $\epsilon$ is a
  $K$-infinitesimal.  Then $\epsilon \cdot B \subseteq I_K$ and hence
  $\epsilon \cdot B$ is contained in any $K$-definable neighborhood of
  0.
\end{remark}
\begin{proof}
  Let $v(-)$ be the valuation on $\Cc$ whose maximal ideal is $I_K$.
  As in the proof of Corollary~\ref{definable-topology}, there is some
  $K$-definable set $B'$ containing the maximal ideal $I_K$ and
  contained in the valuation ring.  In particular, $B'$ is a
  neighborhood of 0 and elements of $B'$ have nonnegative valuation.
  Because $B$ is a standard ball, there is some $a \in \Cc^\times$
  such that $a \cdot B \subseteq B'$.  As $B$ and $B'$ are
  $K$-definable, we can take $a \in K$.  Neither $a$ nor $a^{-1}$ is
  $K$-infinitesimal, so $v(a) = 0$, and we see that $v(x) \ge 0$ for
  any $x \in B$.

  Now if $\epsilon$ is a $K$-infinitesimal, then every element of
  $\epsilon \cdot B$ has positive valuation, so $\epsilon \cdot B
  \subseteq I_K$.
\end{proof}

\section{Henselianity} \label{sec:end}
Let $\mathcal{O}_K$ be the valuation ring whose maximal ideal is
$I_K$.  (This notation is a bit unfortunate, since $\mathcal{O}_K$ is
a valuation ring on $\Cc$, not $K$.  In fact, the valuation is trivial
on $K$.)

In this section, we prove that $\mathcal{O}_K$ is a henselian
valuation ring.

\subsection{Finding interior}

\begin{lemma}\label{no-alg}
  Naming infinitesimals does not algebraize anything, in the following
  sense:
  \begin{enumerate}
  \item Let $\Cc^* \succeq \Cc$ be an elementary extension, and
    $\epsilon \in \Cc^*$ be $\Cc$-infinitesimal.  For any small $S
    \subset \Cc$, we have $\Cc \cap \acl(S \epsilon) = \acl(S)$.
  \item Let $p$ be an infinitesimal type over $\Cc$.  Suppose $S
    \subset \Cc$ is small, $a \in \Cc$, and $\epsilon \models p | S
    a$.  Then $a \in \acl(S) \iff a \in \acl(S \epsilon)$.
  \end{enumerate}
\end{lemma}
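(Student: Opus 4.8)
The plan is to reduce everything to the single fact that definable sets have finite boundary (Proposition~\ref{finite-boundary}), together with the bounded-germs machinery. For part (1), suppose $b \in \Cc \cap \acl(S\epsilon)$ but $b \notin \acl(S)$. Then there is an $S\epsilon$-formula $\phi(x,\epsilon)$ witnessing $b \in \acl(S\epsilon)$, which I may take to have the form $\psi(x) \wedge \theta(x,\epsilon)$ where $\psi$ is over $S$; equivalently, there is an $S$-definable family $\{X_t : t\}$ and a parameter value $\epsilon$ (lying in $I_\Cc$ after translating $S$ into the home sort if necessary) such that $b$ is algebraic over $S$ together with the ``germ at $0$'' of $X_\epsilon$. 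The idea is that since $\epsilon$ is $\Cc$-infinitesimal, the only information $\tp(\epsilon/\Cc)$ can transmit is which germ at $0$ it picks out in each $\Cc$-definable family, and by Corollary~\ref{few-infinitesimals} there are only boundedly many such germs. More concretely: let $b_1, \dots, b_N$ be the conjugates of $b$ over $S\epsilon$; each $b_i \in \Cc$, and by moving $\epsilon$ by automorphisms over $\Cc$ (which is legitimate since $\Cc \prec \Cc^*$ is an elementary substructure and $\epsilon$ stays $\Cc$-infinitesimal by Lemma~\ref{heir}) the finite set $\{b_1,\dots,b_N\}$ is $\Aut(\Cc^*/\Cc)$-invariant once we know it only depends on $\tp(\epsilon/\Cc)$. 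So it suffices to show $\{b_1,\dots,b_N\}$ depends only on $\tp(\epsilon/\Cc)$; but this follows because the defining formula factors through a definable set whose germ at $0$ is the only $\epsilon$-dependent datum, and germs at $0$ are determined by $\tp(\epsilon/\Cc)$. Hence $\{b_1,\dots,b_N\} \subset \acl(S)$, contradicting $b \notin \acl(S)$.

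For the cleanest argument I would instead argue directly via boundaries, mirroring the proof of Lemma~\ref{G00}. Suppose $b \in \Cc \cap \acl(S\epsilon) \setminus \acl(S)$. Pick $b'$ realizing an heir of $\tp(b/S)$ over $S\epsilon$ is not quite the move; rather, let $\epsilon$ realize an heir of its type over a model $K \supseteq S$, so by Lemma~\ref{heir} $\epsilon$ stays $K$-infinitesimal. Then $b \in \acl(K\epsilon)$, and $b \notin \acl(K)$ after enlarging $S$ to a model (this is harmless since algebraic closure of a small set is small, so we can absorb it). Let $Y = \{b_1, \dots, b_N\}$ be the conjugates of $b$ over $K\epsilon$; this finite set is coded by some $K$-definable function of $\epsilon$, i.e.\ there is a $K$-definable family $(F_t)_t$ with $F_\epsilon = Y$. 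Since $\epsilon$ is $K$-infinitesimal and the value $F_\epsilon$ lands in the (finite, hence discrete) space of $N$-element subsets of $\Cc$, continuity of $F$ near $0$ — which holds wherever $F$ is defined away from a finite boundary set, by Proposition~\ref{finite-boundary} applied to the fibers of $F$ — forces $F_\epsilon = F_0 \in \dcl(K)$. So $Y \subset K$, contradicting $b \notin \acl(K) = \acl(S)$. This uses only that a $K$-definable function into a finite set is locally constant off a finite set, which is exactly Proposition~\ref{finite-boundary} applied to the preimages of the finitely many possible values.

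Part (2) follows from part (1) by a standard transfer. Given an infinitesimal type $p$ over $\Cc$, a small $S \subset \Cc$, and $a \in \Cc$ with $\epsilon \models p|Sa$, work in an elementary extension $\Cc^* \succeq \Cc$ realizing $p$; then $\epsilon \in \Cc^*$ is $\Cc$-infinitesimal. The direction $a \in \acl(S) \implies a \in \acl(S\epsilon)$ is trivial. Conversely, if $a \in \acl(S\epsilon)$, then $a \in \Cc \cap \acl(S\epsilon) = \acl(S)$ by part (1), using $S \subseteq \Cc$ and $a \in \Cc$. (The hypothesis $\epsilon \models p|Sa$ is what lets us put $a$ into the base set $S$ without changing which type $\epsilon$ realizes over it, so that $\epsilon$ is still the right infinitesimal.) The main obstacle is the bookkeeping in part (1): making precise the claim that the only $\epsilon$-dependent datum in a $K$-definable formula evaluated at a $K$-infinitesimal is a germ at $0$, and that this germ is type-determined — this is where Proposition~\ref{finite-boundary} and Corollary~\ref{few-infinitesimals} do the real work, and one must be careful that enlarging $S$ to a model and taking heirs (via Lemma~\ref{heir}) does not disturb $K$-infinitesimality.
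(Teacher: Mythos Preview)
Your treatment of part (2) is correct and matches the paper's argument exactly.

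For part (1), your first paragraph is close to the paper's proof but does not finish it. The paper's argument is simply: the set $X_\epsilon := \acl(S\epsilon) \cap \Cc$ depends only on $\tp(\epsilon/\Cc)$; by Corollary~\ref{few-infinitesimals} there are boundedly many such types, so $\bigcup_{\epsilon \in I_\Cc} X_\epsilon$ is small; this union is $\Aut(\Cc/S)$-invariant (because $I_\Cc$ and $S$ are), hence contained in $\acl(S)$. You correctly identify that $X_\epsilon$ depends only on the type, but then you look at $\Aut(\Cc^*/\Cc)$-invariance of the single set $\{b_1,\dots,b_N\}$, which only tells you it sits inside $\Cc$ --- the wrong group and the wrong conclusion. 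What is missing is passing to the \emph{union} over all infinitesimal types and noting invariance under $\Aut(\Cc/S)$. (Also, your claim that each conjugate $b_i$ lies in $\Cc$ is not justified and not needed.)

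Your second paragraph --- the ``cleanest argument'' via Proposition~\ref{finite-boundary} --- has a genuine gap. The function $F$ you define does \emph{not} land in a finite set: its codomain is the space of $N$-element subsets of $\Cc^*$, which is enormous. You have confused ``each value is a finite set'' with ``there are finitely many values.'' Proposition~\ref{finite-boundary} gives that each individual fibre $F^{-1}(c)$ has finite boundary, but with infinitely many fibres the union of boundaries need not be finite, so your local-constancy claim fails. Even if you retreat to the single set $D = \{t : b \in F_t\}$, which is $Kb$-definable, you only get $\epsilon \in D$ and hence $0 \in \overline{D}$; but nothing prevents $0$ from being one of the finitely many boundary points of $D$ (the boundary lies in $\acl(Kb)$, which certainly contains $0$), and then the argument stalls. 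To rescue this route you would need to know that $F$ takes only boundedly many values on infinitesimals --- but that is exactly Corollary~\ref{few-infinitesimals}, at which point the paper's three-line argument is both shorter and cleaner.
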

\begin{proof}
  \begin{enumerate}
  \item Fix $S$.  For $\epsilon \in I_{\Cc}$, let $X_\epsilon = \acl(S
    \epsilon) \cap \Cc$.  Then $X_\epsilon$ is small and depends only
    on $\tp(\epsilon/\Cc)$.  By Corollary~\ref{few-infinitesimals}, it
    follows that $\bigcup_{\epsilon \in I_\Cc} X_\epsilon$ is small.
    It is also $\Aut(\Cc/S)$-invariant, so it must be contained in
    $\acl(S)$.  In particular, $X_\epsilon \subseteq \acl(S)$ for any
    $\Cc$-infinitesimal $\epsilon$.
  \item Let $\Cc^* \succeq \Cc$ be an elementary extension in which $p$
    is realized by some $\epsilon'$.  Then $\epsilon' \equiv_{aS}
    \epsilon$, so
    \begin{equation*}
      a \in \acl(S \epsilon) \iff a \in \acl(S \epsilon') \iff a \in
      \acl(S)
    \end{equation*}
    where the second equivalence follows by the previous point.
  \end{enumerate}
\end{proof}

If $S$ is a small set, say that an $n$-tuple $(a_1,\ldots,a_n)$ is
\emph{algebraically independent over $S$} if $a_i \notin \acl(S,a_{\ne
  i})$ for each $i$.

\begin{lemma}\label{interior-production}
  Let $S$ be a small set over which some standard ball $B$ is defined.
  Let $(a_1,\ldots,a_n)$ be algebraically independent over $S$.  If
  $\vec{a}$ is in an $S$-definable set $Y \subset \Cc^n$, then
  $\vec{a} \in Y^{int}$ (in the product topology on $\Cc^n$).
\end{lemma}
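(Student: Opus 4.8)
The plan is to induct on $n$. For $n=1$, this is essentially the statement that an algebraically-independent (i.e., non-algebraic over $S$) point $a_1$ of an $S$-definable set $Y \subseteq \Cc$ lies in the interior of $Y$: otherwise $a_1 \in \partial Y$, and by Proposition~\ref{finite-boundary} the boundary $\partial Y$ is finite and contained in $\acl(\ulcorner Y \urcorner) \subseteq \acl(S)$, contradicting $a_1 \notin \acl(S)$. (Here I am using that $S$-definability of $Y$ gives $\ulcorner Y \urcorner \in \dcl(S)$.)

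For the inductive step, suppose the result holds for $n-1$ and let $(a_1,\ldots,a_n)$ be algebraically independent over $S$ with $\vec{a} \in Y \subseteq \Cc^n$. First I would fix $a_n$ and look at the fiber $Y_{a_n} = \{\vec{x} \in \Cc^{n-1} : (\vec{x},a_n) \in Y\}$, which is definable over $S a_n$. Since $a_i \notin \acl(S, a_{\ne i}) \supseteq \acl(S a_n, a_{\ne i, n})$, the tuple $(a_1,\ldots,a_{n-1})$ is algebraically independent over $S a_n$; a standard ball $B$ is still defined over $S a_n$, so by the inductive hypothesis $(a_1,\ldots,a_{n-1}) \in Y_{a_n}^{int}$. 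Thus there is a product neighborhood $\prod_{i<n}(a_i + \epsilon_i B) \subseteq Y_{a_n}$ for suitable infinitesimals; by compactness/saturation we can find a single $S a_n$-definable standard-ball neighborhood $W \ni \vec{a}\,'$ (writing $\vec{a}\,' = (a_1,\ldots,a_{n-1})$) with $W \times \{a_n\} \subseteq Y$. The remaining task is to "thicken" in the last coordinate: show that for all $x_n$ sufficiently close to $a_n$, the fiber $Y_{x_n}$ still contains a neighborhood of $\vec{a}\,'$ — equivalently, that $a_n$ is in the interior of the $S a_1 \cdots a_{n-1}$-definable set $\{x_n : W' \subseteq Y_{x_n}\}$ for some fixed small ball $W'$ around $\vec a\,'$. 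Since $a_n \notin \acl(S a_1 \cdots a_{n-1})$ by algebraic independence, applying the $n=1$ case (with parameter set $S a_1 \cdots a_{n-1}$) to this set finishes the argument.

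The main obstacle is the last paragraph: making the "thickening in the final coordinate" precise. The subtlety is that the neighborhood $W$ of $\vec a\,'$ produced by the inductive hypothesis depends a priori on $a_n$, so one cannot immediately speak of a single set $\{x_n : W' \subseteq Y_{x_n}\}$ that is definable over $S\vec a\,'$ alone and contains $a_n$. I would handle this by working at the level of types/infinitesimals: fix non-zero infinitesimals and use that "$(\vec a\,' + \vec\epsilon\,' B,\, a_n) \subseteq Y$" is witnessed by a formula over $S$; then $a_n$ satisfies the $S\vec a\,'$-definable condition "there exist infinitesimal-scale parameters making a whole ball around $\vec a\,'$ lie in $Y_{x_n}$", which by Corollary~\ref{definable-topology} and Proposition~\ref{finite-boundary} cuts out a definable set with finite boundary; since $a_n$ avoids $\acl(S\vec a\,')$ it lies in the interior, and intersecting with $W \times \{a_n : \ldots\}$ gives the desired product neighborhood of $\vec a$ inside $Y$. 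Alternatively — and perhaps more cleanly — one can avoid the asymmetry entirely by choosing the order of coordinates via a generic re-indexing, or by running the fiber argument symmetrically in all coordinates and intersecting; but the type-definability bookkeeping above is the route I expect to write out in detail.
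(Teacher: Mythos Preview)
Your induction scheme and base case are fine, and you have correctly located the obstacle in the inductive step. However, your proposed ``thickening in the last coordinate'' does not close the gap. There are really only two ways to set up the definable set in $x_n$, and neither works as written:
\begin{itemize}
\item If you fix a specific small ball $W'$ around $\vec a\,'$ and look at $\{x_n : W' \subseteq Y_{x_n}\}$, this set is definable only over $S\vec a\,'$ \emph{together with the scaling parameter $c$ of $W'$}. For $W'$ to sit inside the neighborhood produced by the inductive hypothesis, $c$ must be infinitesimal over $Sa_n$; you then need $a_n \notin \acl(S\vec a\,' c)$ to apply the $n=1$ case, and nothing you have said gives this.
\item If instead you existentially quantify the ball and look at $\{x_n : \vec a\,' \in (Y_{x_n})^{\mathrm{int}}\}$, this is indeed $S\vec a\,'$-definable and contains $a_n$, so $a_n$ is in its interior. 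But for $x_n$ in that interior you only know that \emph{some} ball around $\vec a\,'$ lies in $Y_{x_n}$, with no uniformity in the radius as $x_n$ varies. This does not yield a product neighborhood of $\vec a$ in $Y$; ``intersecting with $W$'' does nothing, since $W$ was only known to work at the single point $a_n$.
\end{itemize}

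The paper resolves exactly this by reversing the order of the two steps and invoking the key Lemma~\ref{no-alg} (naming an infinitesimal does not algebraize anything). One first applies the $n=1$ case to the \emph{first} coordinate: taking $\epsilon$ to realize a global infinitesimal type over a model containing $S\vec a$, one gets $a_1 + \epsilon B$ inside the fiber $\{x_1 : (x_1,a_2,\ldots,a_n)\in Y\}$ via Remark~\ref{small-ball}. Now the set
\[
Z=\bigl\{(x_2,\ldots,x_n):\ (a_1+\epsilon B)\times\{(x_2,\ldots,x_n)\}\subseteq Y\bigr\}
\]
is $Sa_1\epsilon$-definable and contains $(a_2,\ldots,a_n)$. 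Lemma~\ref{no-alg} guarantees that $(a_2,\ldots,a_n)$ is still algebraically independent over $Sa_1\epsilon$, so the inductive hypothesis applies to $Z$ and produces a neighborhood $U\ni(a_2,\ldots,a_n)$ with $(a_1+\epsilon B)\times U\subseteq Y$. The moral: fix the infinitesimal scaling parameter \emph{before} running the induction, and use Lemma~\ref{no-alg} to certify that adding it to the base does not destroy algebraic independence. Your route can be repaired along the same lines (choose $c$ generically and cite Lemma~\ref{no-alg} to get $a_n\notin\acl(S\vec a\,' c)$), but as written it is missing precisely this ingredient.
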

\begin{proof}
  We proceed by induction on $n$.  The case $n = 1$ is
  Proposition~\ref{finite-boundary}.  Suppose $n > 1$.  Let $K$ be a
  small model containing $S, a_1, \ldots, a_n$.  Let $p$ be some
  global infinitesimal type and let $\epsilon$ realize $p | K$.

  Let $Y$ be the set of $x_1 \in \Cc$ such that $(x_1,a_2,\ldots,a_n)
  \in X$.  Then $Y$ is $S a_2 \cdots a_n$-definable, so $a_1 \notin
  \partial Y$ by Proposition~\ref{finite-boundary}.  Then $Y - a_1$ is
  a $K$-definable neighborhood of 0, so it contains $\epsilon \cdot B$
  by Remark~\ref{small-ball}.  Thus $a_1 + \epsilon \cdot B \subseteq
  Y$.

  Consider
  \begin{equation*}
    Z = \left \{ (x_2,\ldots,x_n) \in \Cc^{n-1} : (a_1 + \epsilon
    \cdot B) \times \{(x_1,\ldots,x_n)\} \subseteq X \right \}
  \end{equation*}
  This set is $S a_1 \epsilon$-definable, and contains
  $(a_2,\ldots,a_n)$.  By Lemma~\ref{no-alg}, $(a_2,\ldots,a_n)$ is
  algebraically independent over $S a_1 \epsilon$, so by induction,
  $(a_2,\ldots,a_n)$ is in the interior of $Z$.  If $U$ is any
  neighborhood of $(a_2,\ldots,a_n)$ in $Z$, then $(a_1 + \epsilon
  \cdot B) \times U$ is a neighborhood of $(a_1,\ldots,a_n)$ in $X$.
\end{proof}

\begin{proposition}\label{weak-open}
  Let $f : \Cc^n \to \Cc^n$ be a finite-to-one definable map.  If $X
  \subset \Cc^n$ is a set with non-empty interior, then $f(X)$ also
  has non-empty interior.  (We are not assuming $X$ is definable.)
\end{proposition}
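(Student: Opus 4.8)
The plan is to reduce at once to the case where $X$ is a basic open box (so that $f(X)$ becomes an honest definable set) and then to exhibit a point of $f(X)$ that is algebraically independent over a suitable small parameter set, at which point Lemma~\ref{interior-production} finishes the job.

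First I would shrink $X$. By Corollary~\ref{definable-topology} the sets $\prod_{i=1}^n(a_i\cdot B + c_i)$, with $B$ a fixed standard ball, $a_i\in\Cc^\times$ and $c_i\in\Cc$, form a basis for the product topology on $\Cc^n$. Since $X$ has non-empty interior, that interior contains some $W$ of this form, and as $f(W)\subseteq f(X)$ it suffices to prove the proposition with $W$ in place of $X$. So I may assume $X = \prod_{i=1}^n(a_i\cdot B + c_i)$. Fix a small set $S$ over which $X$, $f$, and the standard ball $B$ are all defined; then $f(X)$ is an $S$-definable subset of $\Cc^n$.

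Now I would produce a generic point of $X$. Each factor $a_i\cdot B + c_i$ is an infinite definable subset of $\Cc$, hence of dp-rank $1$, and dp-rank is additive over products, so $\dpr(X)=n$; since $\dpr(X)=\sup\{\dpr(\vec x/S):\vec x\in X\}$, some $\vec a\in X$ has $\dpr(\vec a/S)=n$. Such an $n$-tuple is automatically algebraically independent over $S$: if $a_i\in\acl(S,a_{\ne i})$ for some $i$, peeling off $a_i$ first and then the rest, subadditivity would give $\dpr(\vec a/S)\le n-1$. Put $\vec b=f(\vec a)\in f(X)$. Since $\vec b\in\dcl(S\vec a)$ we have $\dpr(\vec a\vec b/S)=\dpr(\vec a/S)=n$; and since $f$ is finite-to-one, $\vec a\in\acl(S\vec b)$, so $\dpr(\vec a/S\vec b)=0$ and subadditivity gives $n=\dpr(\vec a\vec b/S)\le\dpr(\vec a/S\vec b)+\dpr(\vec b/S)=\dpr(\vec b/S)$. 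As $\vec b$ is an $n$-tuple, $\dpr(\vec b/S)=n$, so $\vec b$ is algebraically independent over $S$ by the same remark.

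It remains to note that $f(X)$ is an $S$-definable subset of $\Cc^n$ containing the $S$-algebraically-independent tuple $\vec b$, and that a standard ball is $S$-definable; so Lemma~\ref{interior-production} applies and gives $\vec b\in f(X)^{int}$. In particular $f(X)$ has non-empty interior. I do not anticipate a real obstacle: the substance is already contained in Lemma~\ref{interior-production}. The only steps that need any attention are the opening reduction — it is exactly this that turns $f(X)$ into a definable set, which is what licenses the appeal to Lemma~\ref{interior-production} — and the dp-rank bookkeeping confirming that applying a finite-to-one definable map to a generic tuple yields another generic, hence algebraically independent, tuple.
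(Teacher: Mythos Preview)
Your proof is correct and follows essentially the same approach as the paper's: shrink $X$ to a definable open, pick a small parameter set over which everything is defined, find $\vec a\in X$ of dp-rank $n$, transfer the rank to $\vec b=f(\vec a)$ via interalgebraicity, and invoke Lemma~\ref{interior-production}. The paper is slightly more terse (it reduces to an arbitrary definable open and works over a small model, noting only that $\vec a$ and $\vec b$ are interalgebraic), while you spell out the box form of $X$ and the subadditivity bookkeeping, but there is no substantive difference.
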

\begin{proof}
  The topology on $\Cc^n$ has a definable basis, so $X$ must contain a
  definable open.  Shrinking $X$, we may assume $X$ is definable.
  Choose a small model $K$ such that $X$ and $f$ are $K$-definable and
  some standard ball is $K$-definable.  As $X$ has interior, it has
  dp-rank $n$. Choose $\vec{a} \in X$ with $\dpr(\vec{a}/K) = n$ and
  let $\vec{b} = f(\vec{a})$.  The tuple $\vec{b}$ is interalgebraic
  with $\vec{a}$ over $K$, so $\dpr(\vec{b}/K) = n$.  By dp-minimality
  and subadditivity of dp-rank, this implies $\vec{b}$ is
  algebraically independent over $K$ (otherwise, $\vec{b}$ could have
  dp-rank at most $n - 1$).  By Lemma~\ref{interior-production},
  $\vec{b}$ is in the interior of $f(X)$.
\end{proof}

\subsection{Henselianity}

\begin{lemma}\label{vee-extensions}
  Let $F$ be a field with some structure, and $L/F$ be a finite
  extension.  Suppose $\mathcal{O}$ is a $\vee$-definable valuation
  ring on $F$.  Then each extension of $\mathcal{O}$ to $L$ is
  $\vee$-definable (over the same parameters used to define
  $\mathcal{O}$ and interpret $L$).
\end{lemma}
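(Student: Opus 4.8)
The plan is to reduce the statement to the standard fact that a valuation on a field extends to any finite extension, and then to check that each such extension remains $\vee$-definable by writing it down explicitly in terms of the given $\vee$-definable valuation ring $\mathcal{O}$ on $F$. First I would fix a finite extension $L/F$. Choosing a basis $1 = \omega_1, \ldots, \omega_d$ for $L$ over $F$, the set $L$ is interpreted in $F$ via $F^d$ with the usual definable field operations, and multiplication by a fixed element of $L$ is an $F$-linear, hence definable, map on $F^d$. The key classical input is that the extensions of $\mathcal{O}$ to $L$ are in finite-to-one correspondence with the maximal ideals of the integral closure $\widetilde{\mathcal{O}}$ of $\mathcal{O}$ in $L$; equivalently, each extension is determined by a place of $L$ lying over the place of $F$ attached to $\mathcal{O}$.

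The main work is to produce, for a single chosen extension $\mathcal{O}'$, a $\vee$-definable (i.e. countable-union-of-definable, over the same parameters) description. Here is the approach I would take. Suppose $\mathcal{O}$ is $\vee$-definable, say $\mathcal{O} = \bigcup_{n} D_n$ with $D_n$ definable over the base parameters; equivalently the maximal ideal $\mathfrak{m}$ and the group of units are type-definable over those parameters, and in a saturated model the valuation ring is exactly the set of $x$ with $\val(x) \ge 0$ for the associated valuation. The point is that "$\val(x) \ge 0$" is expressed by "$x^{-1} \notin \mathfrak{m}$" or "$x \in \mathcal{O}$", and both $\mathcal{O}$ and its complement behave well under $\vee$-definability because $\mathcal{O}^c = \{x : x^{-1} \in \mathfrak{m}\}$ and $\mathfrak{m}$ is an intersection of definable sets. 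Now for the extension $\mathcal{O}'$ on $L$: fix an element $a \in L$ generating the residue extension appropriately, or more robustly, use that $\mathcal{O}'$ is the localization of $\widetilde{\mathcal{O}}$ at a maximal ideal, and that $\widetilde{\mathcal{O}} = \{x \in L : x$ integral over $\mathcal{O}\}$ is $\vee$-definable: $x \in \widetilde{\mathcal{O}}$ iff there exist $c_0, \ldots, c_{m-1} \in \mathcal{O}$ (for some $m \le d$) with $x^m + c_{m-1}x^{m-1} + \cdots + c_0 = 0$, which is a countable union (over $m$) of definable conditions since $\mathcal{O}$ is $\vee$-definable. Then one picks out the particular maximal ideal by bounding the number of extensions (there are at most $[L:F]$ of them) and separating them with a single definable set, using that $\widetilde{\mathcal{O}}$ has only finitely many maximal ideals above $\mathfrak{m}$ and these are pairwise incomparable, so one can choose $e \in \widetilde{\mathcal{O}}$ lying in all but the chosen one; then $\mathcal{O}' = \{x/y : x, y \in \widetilde{\mathcal{O}}, \ y \notin \mathfrak{m}' \}$ where $\mathfrak{m}'$ is the chosen maximal ideal, and membership in $\mathfrak{m}'$ is $\vee$-co-definable by the usual trick $z \in \mathfrak{m}'$ iff $1/(1+ze) \in \widetilde{\mathcal{O}}$ or similar.

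The step I expect to be the main obstacle is the bookkeeping needed to separate a \emph{chosen} extension from the others in a $\vee$-definable way — i.e. showing that "being in this particular maximal ideal $\mathfrak{m}'$ of $\widetilde{\mathcal{O}}$" (rather than merely "being in some maximal ideal") can be cut out over the same parameters. The clean way around this is to argue that it suffices to handle the valuation \emph{rings} rather than valuations, and to observe that the finite set of extensions is permuted by $\Aut(L/F)$ only in the Galois case; in general one should instead argue model-theoretically in a monster model, noting that each extension $\mathcal{O}'$ has $\mathcal{O}'^{00}$-style type-definability coming from that of $\mathcal{O}$, or more simply, use that $\widetilde{\mathcal{O}}$ is semilocal so that by the Chinese Remainder Theorem one can find a single element $t \in \widetilde{\mathcal{O}}$ with prescribed behaviour at each maximal ideal, making each $\mathcal{O}' = \widetilde{\mathcal{O}}[1/t]$ for an appropriate $t$, which is then visibly $\vee$-definable over the original parameters. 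Assembling these pieces gives the lemma; none of the individual steps is deep, but the correct packaging of the semilocal-ring argument is where care is required.
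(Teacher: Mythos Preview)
Your plan via the integral closure is natural, and the first part is correct: $\widetilde{\mathcal{O}}$ is $\vee$-definable, since $x$ is integral over $\mathcal{O}$ iff its minimal polynomial over $F$ (of degree $\le [L:F]$) has coefficients in $\mathcal{O}$, valuation rings being integrally closed. The genuine gap is in the localization step. You assert that a CRT-chosen $t$ --- a unit at $\mathfrak{m}'$ and lying in every other maximal ideal $\mathfrak{m}_i$ --- gives $\mathcal{O}' = \widetilde{\mathcal{O}}[1/t]$. This can fail when the value group of $\mathcal{O}$ has no maximal proper convex subgroup. In that situation each $v_i(t)$ lies in some proper convex subgroup of $\Gamma_i$, so the multiples $n\,v_i(t)$ are not cofinal in $\Gamma_i^{>0}$; for $x \in \mathcal{O}'$ with $v_i(x)$ sufficiently negative (below that convex subgroup), no power $t^n$ brings $t^n x$ into $\mathcal{O}_i$, and hence $x \notin \widetilde{\mathcal{O}}[1/t]$. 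The earlier suggestion ``$z \in \mathfrak{m}'$ iff $1/(1+ze) \in \widetilde{\mathcal{O}}$'' is not correct as stated either, so the bookkeeping you flagged as the main obstacle is indeed unresolved.

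The paper's proof avoids this by a compactness argument rather than explicit localization. After passing to the normal closure, it works in the auxiliary theory of degree-$n$ normal extensions equipped with a valuation predicate, where ``no extension of $\mathcal{O}_L \cap F$ contains $\{a_1,\ldots,a_k\}$'' is a definable condition on the tuple (it says $\{a_1,\ldots,a_k\} \not\subseteq \sigma(\mathcal{O}_L)$ for every $\sigma \in \Aut(L/F)$). Compactness then yields a uniform bound $d(k,n)$ on the degree of a witnessing polynomial $P \in \mathfrak{m}[X_1,\ldots,X_k]$ with $1 = P(a_1,\ldots,a_k)$. Since $\mathfrak{m}$ is type-definable, this makes the condition type-definable. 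Now fix a finite $S \subset \mathcal{O}'$ lying in no other extension (possible because the finitely many extensions are pairwise incomparable); then $L \setminus \mathcal{O}' = \{x : \text{no extension contains } S \cup \{x\}\}$ is type-definable, so $\mathcal{O}'$ is $\vee$-definable. The compactness bound is precisely what sidesteps the rank pathology that breaks your single-$t$ localization.
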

\begin{proof}
  Replacing $L$ with the normal closure of $L$ over $F$, we may assume
  $L/F$ is a normal extension of some degree $n$.

  \begin{claim}
    There is some $d = d(k,n)$ such that the following are equivalent
    for $\{a_1,\ldots,a_k\} \subset L$:
    \begin{itemize}
    \item No extension of $\mathcal{O}$ to $L$ contains
      $\{a_1,\ldots,a_k\}$.
    \item $1 = P(a_1,\ldots,a_k)$ for some polynomial
      $P(X_1,\ldots,X_k) \in \mathfrak{m}[X_1,\ldots,X_k]$ of degree
      less than $d(k,n)$.
    \end{itemize}
  \end{claim}
  \begin{proof}
    Consider the theory $T_n$ whose models consist of degree $n$
    normal field extensions $L/F$ with a predicate picking out a
    valuation ring $\mathcal{O}_L$ on $L$.  On general
    valuation-theoretic grounds, the following are equivalent for
    $\{a_1,\ldots,a_k\} \subset L$
    \begin{itemize}
    \item $\{a_1,\ldots,a_k\} \not \subseteq \sigma(\mathcal{O}_L)$
      for any $\sigma \in \Aut(L/F)$.
    \item No extension of $\mathcal{O}_L \cap F$ to $L$ contains
      $\{a_1,\ldots,a_k\}$.
    \item $1 = P(a_1,\ldots,a_k)$ for some $P(X_1,\ldots,X_k) \in
      \mathfrak{m}[X_1,\ldots,X_k]$.
    \end{itemize}
    The first condition is a definable condition on the $k$-tuple
    $(a_1,\ldots,a_k)$, so by compactness applied to $T_n$, there is a
    bound on the degree in the third condition.
  \end{proof}
  Because $\mathcal{O}$ is $\vee$-definable, $\mathfrak{m}$ is
  type-definable, so the second condition in the claim is
  type-definable.

  Let $\mathcal{O}'$ be some extension of $\mathcal{O}$ to $L$.  We
  can find some finite set $S \subseteq \mathcal{O}'$ such that
  $\mathcal{O}'$ is the unique extension of $\mathcal{O}$ containing
  $S$, because there are only finitely many extensions and they are
  pairwise incomparable.  The claim implies type-definability of the
  set 
  \begin{equation*}
    \{x \in L : \text{no extension of $\mathcal{O}$ to $L$ contains $S
      \cup \{x\}$}\}
  \end{equation*}
  which is the complement of $\mathcal{O}'$ by choice of $S$.
\end{proof}

Recall that $\mathcal{O}_K$ denotes the valuation ring whose maximal
ideal is $I_K$.

\begin{proposition}\label{weak-hensel}
  Let $K$ be a small submodel of $\Cc$.  Let $L/K$ be a finite
  algebraic extension, and $\Ll = L \otimes_K \Cc$.  (So $\Ll$ is a
  saturated elementary extension of $L$.)  Then $\mathcal{O}_K$ has a
  unique extension to $\Ll$.
\end{proposition}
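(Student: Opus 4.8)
The statement is a disguised form of Hensel's lemma: asking for it for \emph{every} finite $L/K$ is equivalent to $\mathcal{O}_K$ being henselian. (Note $\Ll = L \otimes_K \Cc$ is a field, because $K \preceq \Cc$ keeps $K$-irreducible polynomials irreducible over $\Cc$, and $L/K$ is separable since $\Cc$ is perfect by Observation~\ref{perfect}.) So the plan is to prove Hensel's lemma for $\mathcal{O}_K$ directly: given $f \in \mathcal{O}_K[X]$ whose reduction $\bar f$ has a simple root $\bar a$ in the residue field, find $a \in \mathcal{O}_K$ lifting $\bar a$ with $f(a) = 0$. Enlarging the small model $K$ harmlessly, take $f$ and a lift $a_0 \in K$ of $\bar a$; substituting $x = a_0 + \epsilon$ and dividing by the unit $f'(a_0)$ turns the problem into: for a given $\delta \in I_K$ find $\epsilon \in I_K$ with $h(\epsilon) = \delta$, where $h(\epsilon) = \epsilon(1 + \eta(\epsilon))$ is a $K$-definable polynomial map with $\eta \in \mathcal{O}_K[\epsilon]$ satisfying $\eta(\epsilon) \in I_K$ for $\epsilon \in I_K$. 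It suffices to show $h(I_K) \supseteq I_K$.

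The map $h$ is finite-to-one, and $1 + \eta(\epsilon)$ is a unit for $\epsilon \in I_K$, so $h$ preserves the valuation on $I_K$ and maps each ball $\{v \ge \gamma\}$ ($\gamma > 0$) into itself; the goal is to show it maps each such ball \emph{onto} itself. Pick a $K$-definable open $B$ with $I_K \subseteq B \subseteq \mathcal{O}_K$ (available by Corollary~\ref{definable-topology}), and for nonzero $c \in I_K$ set $h_c(\beta) = c^{-1} h(c\beta) = \beta(1 + \eta(c\beta))$. One checks that $h_c$ is a $Kc$-definable injection of $\mathcal{O}_K$ that moves every point by an element of $c\mathcal{O}_K \subseteq I_K$; hence $h_c(B) \subseteq B$ (as $B$ is open), $h_c(B) \ni 0$, and by Proposition~\ref{weak-open} $h_c(B)$ has nonempty interior. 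The decisive step is to show that $h_c(B)$ has the same germ at $0$ as $B$ --- equivalently that $h_c(B)$ contains a neighbourhood of $0$ --- using Theorem~\ref{few-germs} (the definable family $\{h_c(B) : c \in I_K \setminus \{0\}\}$ has boundedly many germs at $0$, while $h_c \to \id$ as $v(c) \to \infty$) together with Proposition~\ref{finite-boundary} to control the finite boundary of $h_c(B)$. Granting this, $h_c(B) \supseteq \{v \ge \gamma\}$ for some $\gamma$, so $h(cB) = c \cdot h_c(B) \supseteq \{v \ge \gamma + v(c)\}$; letting $c$ and $\gamma$ range over positive values gives $h(I_K) \supseteq I_K$, which is Hensel's lemma.

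The hard part is exactly this decisive step --- pinning down the germ of $h_c(B)$ --- which is tantamount to Hensel's lemma itself. The naive route of iterating Newton's method and passing to a limit does not obviously work: $\Cc$ is saturated as a pure field, but $\mathcal{O}_K$ is only $\vee$-definable over $K$, so $(\Cc, \mathcal{O}_K)$ need not be saturated as a valued field, Newton iterates need not converge when $\Gamma_K$ has infinite rank, and pseudo-Cauchy sequences need not have pseudo-limits. The whole role of the canonical-topology apparatus --- weak openness of definable finite-to-one maps (Proposition~\ref{weak-open}), finiteness of boundaries (Proposition~\ref{finite-boundary}), boundedly many germs or infinitesimal types (Theorem~\ref{few-germs}, Corollary~\ref{few-infinitesimals}), and a definable neighbourhood basis (Corollary~\ref{definable-topology}) --- is to replace such convergence/completeness arguments by definability- and boundedness-based ones. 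If one prefers to argue by contradiction, an equivalent packaging is: pass to the normal closure, let $\Gal$ act transitively on the set of extensions of $\mathcal{O}_K$ to $\Ll$, note by Lemma~\ref{vee-extensions} that any two extensions are $\vee$-definable over $K$ and incomparable (if one refined the other, the kernel on value groups would be a nonzero convex subgroup disjoint from the value group of $\mathcal{O}_K$, impossible since the extension is torsion over it), and derive a contradiction by the same topological analysis; a value group of infinite rank may additionally force an induction on the convex-subgroup structure, using that the residue field of any coarsening of $\mathcal{O}_K$ is again a dp-minimal field by the dp-rank computations.
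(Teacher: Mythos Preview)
Your proposal has a genuine gap at precisely the point you flag as ``the decisive step'': you never show that $h_c(B)$ contains a neighbourhood of $0$. Invoking Theorem~\ref{few-germs} does not do this---knowing there are only boundedly many germs at $0$ does not tell you \emph{which} germ $h_c(B)$ has, and the heuristic ``$h_c \to \id$ as $v(c)\to\infty$'' does not interact with the discrete equivalence relation of germs in any obvious way. Even granting the step, your conclusion $h(cB)\supseteq\{v\ge\gamma+v(c)\}$ has $\gamma$ depending on $c$, so you would still need a uniformity you have not established. Two smaller issues: the proposition is \emph{not} equivalent to henselianity of $\mathcal{O}_K$ (it only treats extensions $\Ll=L\otimes_K\Cc$ defined over the fixed small model $K$, not arbitrary finite extensions of $\Cc$; full henselianity is only obtained later in Theorem~\ref{hensel}), and ``enlarging the small model $K$ harmlessly'' is not harmless, since $\mathcal{O}_K$ changes to the strictly coarser $\mathcal{O}_{K'}$.

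The paper's argument is quite different and avoids Hensel's lemma entirely. One assumes there are several extensions $\mathcal{O}_1,\ldots,\mathcal{O}_m$ of $\mathcal{O}_K$ to $\Ll$, each $\vee$-definable by Lemma~\ref{vee-extensions}. The key computation is that $I_L:=\bigcap_i\mathfrak{m}_i$ corresponds to the \emph{product topology} on $\Ll\cong\Cc^n$: writing $\Ll=\Cc(\alpha)$ and embedding into an algebraically closed valued extension, the Vandermonde matrix in the conjugates of $\alpha$ lies in $GL_n(\mathcal{O})$, so $x\in I_L$ iff all its coordinates lie in $I_K$. Then by approximation one finds $x\in(1+\mathfrak{m}_1)\cap\bigcap_{i>1}(-1+\mathfrak{m}_i)$, so $x^2\in 1+I_L$ while $\pm x\notin 1+I_L$. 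Now $1+I_L$ is open in the product topology, so Proposition~\ref{weak-open} applied to the squaring map gives that $(1+I_L)^2$ is open, hence contains $1+I_L$; a compactness argument then produces a $K$-definable $U\supseteq I_L$ with $x^2\in(1+U)^2$ but $\pm x\notin 1+U$, a contradiction. (In characteristic $2$ one uses the Artin-Schreier map and $I_L\le(\Ll,+)$ instead.) The crucial ingredient you were missing is this product-topology identification, which lets one apply Proposition~\ref{weak-open} on $\Ll\cong\Cc^n$ rather than trying to run a Newton-style iteration on $\Cc$.
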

\begin{proof}
  Let $\mathcal{O}_1, \ldots, \mathcal{O}_m$ denote the extensions of
  $\mathcal{O}_K$ to $\Ll$.  By Lemma~\ref{vee-extensions}, these are
  all $\vee$-definable over $K$.  Let $\mathfrak{m}_i$ be the maximal
  ideal of $\mathcal{O}_i$; this is type-definable over $K$.  Let
  $v_i$ be the valuation on $\mathcal{O}_i$.

  Each $v_i$ is a non-trivial valuation, which is trivial when
  restricted to $K$ or even $L$.  It follows easily that each
  $\mathfrak{m}_i$ satisfies conditions \ref{t1}-\ref{t7} of
  \S\ref{sec:filters}.

  Let $I_L$ denote $\bigcap_i \mathfrak{m}_i$.  Then $I_L$ satisfies
  conditions \ref{t2}-\ref{t6} of \S\ref{sec:filters} because the
  $\mathfrak{m}_i$ do, and it satisfies condition \ref{t1}
  (non-triviality) because it contains $I_K$.

  So the $\mathcal{O}_i$ determine V-topologies on $L$, and $I_L$
  determines a field topology on $L$.

  \begin{claim}
    The topology on $I_L$ is the product topology on $L$ (thinking of
    $L$ as a finite-dimensional $K$-vector space.)
  \end{claim}
  \begin{proof}
    Write $L = K(\alpha)$ (possible by Observation~\ref{perfect}).  So
    $\Ll = \Cc(\alpha)$, and $\{1,\alpha,\cdots,\alpha^{n-1}\}$ is a
    basis for $\Ll$ over $\Cc$.

    Let $(F,\mathcal{O})$ be some algebraically closed valued field
    extending $(\Cc,\mathcal{O}_K)$.  Let $\iota_1, \ldots, \iota_n$
    denote the embeddings of $\Ll$ into $F$.  Then
    \begin{align*}
      \{\mathcal{O}_1,\ldots,\mathcal{O}_m\} &= \{
      \iota_i^{-1}(\mathcal{O}) : 1 \le i \le n\} \\
      \{\mathfrak{m}_1,\ldots,\mathfrak{m}_m\} &=
      \{\iota_i^{-1}(\mathfrak{m}) : 1 \le i \le n\}      
    \end{align*}
    where $\mathfrak{m}$ is the maximal ideal of $\mathcal{O}$.  (This
    is merely saying that all the extensions of $\mathcal{O}_K$ to
    $\Ll$ are obtained by embeddings of $\Ll$ into $F$.)

    Because $K \subseteq \mathcal{O}$ (as no element of $K$ is the
    reciprocal of a $K$-infinitesimal), it follows that $K^{alg}
    \subseteq \mathcal{O}$, where $K^{alg}$ is the algebraic closure
    of $K$ inside $F$.  Let $\alpha_1,\ldots,\alpha_n$ be the images
    of $\alpha$ under $\iota_1,\ldots,\iota_n$.  These are pairwise
    distinct because $\Ll/\Cc$ is separable (by
    Observation~\ref{perfect}).  Let $M$ be the Vandermonde matrix
    whose $(i,j)$ entry is $\alpha_i^{j-1}$.  Then $M \in
    GL_n(K^{alg}) \subseteq GL_n(\mathcal{O})$.

    It follows that multiplication by $M$ and $M^{-1}$ preserves
    $\mathcal{O}^n \subseteq F^n$, as well as $\mathfrak{m}^n
    \subseteq F^n$.  Concretely, this means that if
    $(x_0,x_1,\ldots,x_{n-1}) \in F^n$, then the following are
    equivalent:
    \begin{itemize}
    \item Each $x_i \in \mathfrak{m}$
    \item $\sum_{i = 0}^{n-1} x_i \alpha_j^i \in \mathfrak{m}$ for
      each $j$.
    \end{itemize}
    Specializing to the case where $x_0,\ldots,x_{n-1} \in \Cc$, and
    writing $x = \sum_{i = 0}^{n-1} x_i \alpha^i$, the following are
    equivalent:
    \begin{itemize}
    \item The coordinates of $x$ (with respect to the basis
      $\{1,\cdots,\alpha^{n-1}\}$) are in $I_K$
    \item $\iota_j(x) \in \mathfrak{m}$ for each $j \le n$, or
      equivalently, $x \in \mathfrak{m}_i$ for each $i \le m$.
    \end{itemize}
    The latter of these means that $x \in I_L$, so we see that
    \begin{equation*}
      I_L = I_K + I_K \cdot \alpha + \cdots + I_K \cdot \alpha^{n-1}
    \end{equation*}
    from which it is clear that $I_L$ corresponds to the product topology.
  \end{proof}

  Our goal is to prove that $\mathcal{O}_1, \ldots, \mathcal{O}_m$ are
  all equal (i.e., $m = 1$).  Suppose otherwise.

  First suppose that $K$ does not have characteristic 2.  By some fact
  related to Stone approximation (see the proof of (4.1) in
  \cite{prestel-ziegler}), we can find an element $x$ such that $x \in
  1 + \mathfrak{m}_1$ and $x \in -1 + \mathfrak{m}_i$ for $i > 1$.
  Note that $x^2 \in 1 + \mathfrak{m}_i$ for all $i$.

  Thus $x \notin 1 + I_L$, $x \notin -1 + I_L$, and $x^2 \in 1 + I_L$.

  Because $I_L$ defines a field topology, $1 + I_L$ is a subgroup of
  $\Ll^\times$.  It is also topologically open: if $B$ is a standard
  ball and $\epsilon$ is $K$-infinitesimal, then $\epsilon \cdot B
  \subseteq I_K$ (by Remark~\ref{small-ball}), so by the Claim
  \begin{equation*}
  1 + \epsilon \cdot B + \epsilon \cdot B \cdot \alpha + \cdots +
  \epsilon \cdot B \cdot \alpha^{n-1}
  \end{equation*}
  is an open set inside $1 + I_L$.

  The squaring map on $\Ll^\times$ is finite-to-one, so by Proposition
  \ref{weak-open}, $(1 + I_L)^2$ has interior.  Since $(1 + I_L)^2$ is
  a group, it is actually open, hence contains a neighborhood of $1$:
  \begin{equation}
    (1 + I_L)^2 \text{ is a neighborhood of 1} \label{eq2}
  \end{equation}

  Now $x \notin 1 + I_L$ and $-x \notin 1 + I_L$, and $I_L$ is
  type-definable over $K$.  So there is some $K$-definable set $U$
  containing $I_L$, such that $x \notin 1+U$ and $-x \notin 1 + U$.
  By (\ref{eq2}), $(1+U)^2$ is a neighborhood of 0.  It is
  $K$-definable, so it contains $1 + I_L$, hence $x^2$.  Then there is
  $y \in 1 + U$ such that $y^2 = x^2$.  Either $x \in 1 + U$ or $-x
  \in 1 + U$, contradicting the choice of $U$.

  If $K$ has characteristic 2, replace $-1$ and $1$ with $0$ and $1$,
  replace the squaring map with the Artin-Schreier map, and replace $1
  + I_L < \Ll^\times$ with $I_L < \Ll$.
\end{proof}

\begin{lemma}\label{one-topology}
  If $\mathcal{O}$ is a non-trivial definable valuation ring on $\Cc$,
  then $\mathcal{O}$ induces the canonical topology on $\Cc$.  If
  $\mathcal{O}$ is $K$-definable, then $\mathcal{O}_K$ is a coarsening
  of $\mathcal{O}$.
\end{lemma}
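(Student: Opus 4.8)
The plan is to extract one elementary observation and feed it into both halves of the statement, with the only non-trivial external input being the rigidity of $V$-topologies. The observation is this: if $\mathfrak{m} \subseteq \Cc$ is an infinite definable additive subgroup, then $\mathfrak{m} \mininf \mathfrak{m} = \mathfrak{m}$. Indeed, since $\mathfrak{m}$ is a subgroup, the conjunction $c + y \in \mathfrak{m} \wedge y \in \mathfrak{m}$ is equivalent to $c \in \mathfrak{m} \wedge y \in \mathfrak{m}$; hence $\mathfrak{m} \mininf \mathfrak{m}$ is precisely the set of $c \in \mathfrak{m}$ for which $\mathfrak{m}$ is infinite, i.e. $\mathfrak{m}$ itself. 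I will apply this with $\mathfrak{m}$ the maximal ideal of $\mathcal{O}$, which is definable over whatever parameters define $\mathcal{O}$, is an additive subgroup, and is infinite because $\mathcal{O}$ is a non-trivial valuation ring.

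For the second statement, assume $\mathcal{O}$ is $K$-definable. Then $\mathfrak{m}$ is an infinite $K$-definable set, so $\mathfrak{m} \mininf \mathfrak{m}$ is one of the basic neighborhoods of $0$ whose intersection is $I_K$; by the observation, $I_K \subseteq \mathfrak{m} \mininf \mathfrak{m} = \mathfrak{m}$. So the maximal ideal $I_K$ of $\mathcal{O}_K$ sits inside the maximal ideal $\mathfrak{m}$ of $\mathcal{O}$; since for two valuation rings on a common field the containment of maximal ideals reverses the containment of the rings, $I_K \subseteq \mathfrak{m}$ yields $\mathcal{O} \subseteq \mathcal{O}_K$, i.e. $\mathcal{O}_K$ coarsens $\mathcal{O}$.

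For the first statement, write $\tau_1$ for the valuation topology of $\mathcal{O}$ and $\tau_2$ for the canonical topology on $\Cc$ (a $V$-topology, by Theorem~\ref{v-top} applied to $\Cc$ itself, which is a model just like $K$). By the observation, $\mathfrak{m}$ is a basic neighborhood of $0$ for $\tau_2$, and since $\tau_2$ is a ring topology each $a \cdot \mathfrak{m}$ with $a \in \Cc^\times$ is again a $\tau_2$-neighborhood of $0$; as the sets $a \cdot \mathfrak{m}$ form a neighborhood basis of $0$ for $\tau_1$, we conclude that $\tau_2$ refines $\tau_1$. The main obstacle is upgrading this to equality: the $\mininf$-computation only ever produces one inclusion of topologies, so a priori $\mathcal{O}$ could induce a strictly coarser topology. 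Here I invoke the standard fact (\cite{prestel-ziegler}) that two distinct $V$-topologies on a field are independent, and hence neither can refine the other; this forces $\tau_1 = \tau_2$. (Equivalently, if $\tau_1 \ne \tau_2$ then by independence and the approximation theorem both $\mathfrak{m}$ and $\Cc \setminus \mathfrak{m}$ would be $\tau_2$-dense, so $\mathfrak{m}$ would have all of $\Cc$ as its $\tau_2$-boundary, contradicting Proposition~\ref{finite-boundary}.)
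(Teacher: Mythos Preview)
Your proof is correct, and it diverges from the paper's in two interesting ways.

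For the inclusion $I_K \subseteq \mathfrak{m}$, the paper appeals to Proposition~\ref{finite-boundary}: the maximal ideal $\mathfrak{m}$ is an infinite definable set, hence has finite boundary, hence has non-empty interior; being a subgroup, it is then open, so $0$ is interior and $I_K \subseteq \mathfrak{m}$. Your route via $\mathfrak{m} \mininf \mathfrak{m} = \mathfrak{m}$ is strictly more elementary: it uses nothing beyond the definition of $\mininf$ and the fact that $\mathfrak{m}$ is an infinite subgroup, bypassing the bounded-germs machinery behind Proposition~\ref{finite-boundary} entirely.

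For the equality of topologies, the paper goes the other way. Having established $I_K \subseteq \mathcal{O} \subseteq \mathcal{O}_K$, it observes that $\mathcal{O}$ is then a ``standard ball'' in the sense of Corollary~\ref{definable-topology}, so the family $\{a \cdot \mathcal{O}\}$ is already a neighborhood basis for the canonical topology---both inclusions of topologies fall out at once, with no appeal to external rigidity of $V$-topologies. Your argument gets only one inclusion from the $\mininf$-computation and then imports the Prestel--Ziegler independence theorem to finish. This is perfectly legitimate, but it trades an internal argument for an external one. (Your parenthetical alternative, via density of $\mathfrak{m}$ and its complement contradicting Proposition~\ref{finite-boundary}, is closer in spirit to what the paper does and keeps the argument self-contained.)

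In short: your first observation buys simplicity that the paper doesn't exploit; your second step spends a classical fact that the paper avoids.
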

\begin{proof}
  It suffices to show that $I_K \subseteq \mathcal{O} \subseteq
  \mathcal{O}_K$, which implies both that $\mathcal{O}_K$ is a
  coarsening of $\mathcal{O}$ and that $\mathcal{O}$ is a standard
  ball as in the proof of Corollary~\ref{definable-topology}.
  
  Let $\mathfrak{m}$ be the maximal ideal of $\mathcal{O}$.  It is
  infinite, since $\mathcal{O}$ is non-trivial.  By
  Proposition~\ref{finite-boundary}, $\mathfrak{m}$ has interior.  As
  $\mathfrak{m}$ is a subgroup of the additive group, $\mathfrak{m}$
  is open.  Then $0$ is in the interior of $\mathfrak{m}$, meaning
  that $I_K \subseteq \mathfrak{m}$.  This directly implies
  $\mathcal{O} \subseteq \mathcal{O}_K$.
\end{proof}

\begin{remark}
  \label{joining-vee-defables}
  Suppose $F$ is a field with some structure, and $\mathcal{O}_1$ and
  $\mathcal{O}_2$ are incomparable $\vee$-definable valuation rings on
  $F$.  Then the join $\mathcal{O}_1 \mathcal{O}_2$ is
  \emph{definable}.
\end{remark}
\begin{proof}
  The join can be written as either $\{x \cdot y : x \in
  \mathcal{O}_1, y \in \mathcal{O}_2\}$ (which is $\vee$-definable) or
  as $\{x \cdot y : x \in \mathfrak{m}_1, y \in \mathfrak{m}_2\}$,
  which is type-definable.\footnote{Here, we are using the fact that
    if $\mathcal{O}$ is a valuation ring with maximal ideal
    $\mathfrak{m}$, and $S$ is any set, then $S \cdot \mathcal{O}$ and
    $S \cdot \mathfrak{m}$ are closed under addition, and are equal to
    each other unless $S$ has an element of minimum valuation.
    Incomparability of $\mathcal{O}_1$ and $\mathcal{O}_2$ ensures
    that e.g. $v_2(\mathcal{O}_2)$ has no minimum.}
\end{proof}

\begin{lemma} \label{no-independent}
  Let $\Ll/\Cc$ be a finite algebraic extension.  Any two non-trivial
  definable valuation rings on $\Ll$ are \emph{not} independent, i.e.,
  they induce the same topology.
\end{lemma}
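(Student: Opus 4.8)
The plan is to produce, for any two non-trivial definable valuation rings $\mathcal{O}_1$ and $\mathcal{O}_2$ on $\Ll$, a common non-trivial coarsening; since a valuation induces the same topology as any of its non-trivial coarsenings (see \cite{prestel-ziegler}), this forces $\mathcal{O}_1$ and $\mathcal{O}_2$ to induce the same topology, which is what the lemma asserts. The common coarsening will be the \emph{unique} extension of $\mathcal{O}_K$ to $\Ll$ provided by Proposition~\ref{weak-hensel}.

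First I would arrange the hypotheses of Proposition~\ref{weak-hensel}. Write $\Ll = \Cc(\alpha)$ with minimal polynomial $f \in \Cc[X]$, and pick a small elementary submodel $K \preceq \Cc$ containing the coefficients of $f$ together with all parameters used to define $\mathcal{O}_1$ and $\mathcal{O}_2$ (these live in $\Cc$ since $\Ll$ is interpreted in $\Cc$). As $f$ is irreducible over $\Cc$ it stays irreducible over $K$, so $L := K(\alpha)$ is finite over $K$ and $L \otimes_K \Cc \cong \Cc(\alpha) = \Ll$. Thus Proposition~\ref{weak-hensel} applies: $\mathcal{O}_K$ has a unique extension $\mathcal{R}$ to $\Ll$, and $\mathcal{R}$ is non-trivial because $\mathcal{R} \cap \Cc = \mathcal{O}_K$ is non-trivial.

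Next I would push the problem down to $\Cc$. For $i = 1,2$, the ring $\mathcal{O}_i \cap \Cc$ is a $K$-definable valuation ring of $\Cc$, and it is \emph{non-trivial}: a valuation on $\Ll$ that restricted trivially to $\Cc$ would be trivial on all of $\Ll$, because $\Ll/\Cc$ is algebraic (in a monic minimal equation $a^n = -\sum_{j<n} c_j a^j$ over $\Cc$ with $v(a) < 0$ the term $a^n$ has strictly smallest value, a contradiction). Lemma~\ref{one-topology} then gives $I_K \subseteq \mathcal{O}_i \cap \Cc \subseteq \mathcal{O}_K$, so $\mathcal{O}_K$ is a coarsening of $\mathcal{O}_i \cap \Cc$ inside $\Cc$.

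Finally I would lift this coarsening back to $\Ll$. Since $\Ll/\Cc$ is finite, the value group of $\mathcal{O}_i$ contains that of $\mathcal{O}_i \cap \Cc$ with finite quotient, so the convex subgroup of the value group of $\mathcal{O}_i$ generated by the convex subgroup that cuts out the coarsening $\mathcal{O}_K$ of $\mathcal{O}_i \cap \Cc$ yields a coarsening $\mathcal{O}_i'$ of $\mathcal{O}_i$ in $\Ll$ with $\mathcal{O}_i' \cap \Cc = \mathcal{O}_K$; that is, $\mathcal{O}_i'$ is an extension of $\mathcal{O}_K$ to $\Ll$, hence $\mathcal{O}_i' = \mathcal{R}$ by uniqueness. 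Thus $\mathcal{R}$ is a common non-trivial coarsening of $\mathcal{O}_1$ and $\mathcal{O}_2$, and they are not independent. The genuine content is external — Proposition~\ref{weak-hensel} and Lemma~\ref{one-topology}, both already available — and the only delicate point here is the elementary valuation theory in the last step (that coarsenings of a valuation on $\Ll$ restrict onto the coarsenings of its restriction to $\Cc$ when $\Ll/\Cc$ is algebraic), which is routine given the finite-index observation on value groups. I anticipate no serious obstacle.
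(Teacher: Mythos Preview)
Your proposal is correct and follows essentially the same route as the paper: restrict the two valuations to $\Cc$, use Lemma~\ref{one-topology} to see that $\mathcal{O}_K$ coarsens both restrictions, lift these coarsenings back to $\Ll$ via the corresponding convex subgroups, and then invoke the uniqueness in Proposition~\ref{weak-hensel} to conclude the two lifts coincide, giving a common non-trivial coarsening. You are slightly more explicit than the paper about why the restrictions to $\Cc$ are non-trivial and about how the convex subgroups transfer along the finite extension, but the argument is the same.
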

\begin{proof}
  Let $w_1, w_2$ be two definable valuations on $\Ll$, and let $v_1$
  and $v_2$ be their restrictions to $\Cc$.  Let $\Gamma_i$ be the
  value group of $w_i$.  Let $K$ be a small model over which
  everything is defined (including the extension $\Ll/\Cc$).  Let
  $v_K$ be the non-definable valuation on $\Cc$ coming from
  $\mathcal{O}_K$ and $I_K$.  By Lemma~\ref{one-topology}, $v_K$ is a
  coarsening of $v_1$ and $v_2$.  So there are convex subgroups
  $\Delta_i < \Gamma_i$ such that $v_K$ is equivalent to the
  coarsening of $v_i$ by $\Delta_i$.  Let $w'_i$ be the coarsening of
  $w_i$ by $\Delta_i$.  Then $w'_1$ and $w'_2$ are valuations on $\Ll$
  extending $v_K$.  By Proposition~\ref{weak-hensel}, $w'_1$ and
  $w'_2$ are equivalent (because $v_K$ has an essentially unique
  extension).  It follows that $w_1$ and $w_2$ have a common
  coarsening---the unique extension of $v_K$ to $\Ll$.  This common
  coarsening is non-trivial, because $v_K$ is non-trivial.
  Non-trivial coarsenings induce the same topology, so $w_1, w'_1,$
  and $w_2$ all induce the same topology.  Therefore $w_1$ and $w_2$
  are not independent.
\end{proof}

\begin{proposition} \label{comparability}
  Let $\Ll$ be a finite extension of $\Cc$.  Any two definable
  valuation rings on $\Ll$ are comparable.
\end{proposition}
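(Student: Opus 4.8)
The plan is to argue by contradiction, form the join of $\mathcal{O}_1$ and $\mathcal{O}_2$, and then re-run the machinery of \S\S\ref{sec:begin}--\ref{sec:end} on one of its residue fields. Since the trivial valuation ring $\Ll$ itself is comparable with every valuation ring, I may assume $\mathcal{O}_1$ and $\mathcal{O}_2$ are both non-trivial. Suppose they are incomparable. By Lemma~\ref{no-independent} they are not independent, which is to say that the join $\mathcal{O}_0 := \mathcal{O}_1 \cdot \mathcal{O}_2$ is a proper subring of $\Ll$, hence a non-trivial valuation ring; it is the finest common coarsening of $\mathcal{O}_1$ and $\mathcal{O}_2$, and it strictly contains each $\mathcal{O}_i$ (otherwise $\mathcal{O}_1$ and $\mathcal{O}_2$ would be comparable). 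Since $\mathcal{O}_1$ and $\mathcal{O}_2$ are incomparable $\vee$-definable valuation rings, Remark~\ref{joining-vee-defables} shows that $\mathcal{O}_0$ is in fact \emph{definable}.

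Next I would pass to the residue field $k_0 := \mathcal{O}_0 / \mathfrak{m}_0$, equipped with the structure induced from $\Ll$ (and hence from $\Cc$); as $\mathcal{O}_0$ is definable, $k_0$ is interpretable in $\Cc$. Because $\mathcal{O}_i \subseteq \mathcal{O}_0$ we have $\mathfrak{m}_0 \subseteq \mathfrak{m}_i \subseteq \mathcal{O}_i$, so each $\mathcal{O}_i$ descends to a definable, non-trivial valuation ring $\bar{\mathcal{O}}_i := \mathcal{O}_i / \mathfrak{m}_0$ of $k_0$. Moreover, since $\mathcal{O}_0 = \mathcal{O}_1 \mathcal{O}_2$ is the \emph{finest} common coarsening of $\mathcal{O}_1$ and $\mathcal{O}_2$, we get $\bar{\mathcal{O}}_1 \cdot \bar{\mathcal{O}}_2 = k_0$; that is, $\bar{\mathcal{O}}_1$ and $\bar{\mathcal{O}}_2$ are \emph{independent} valuation rings on $k_0$.

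To finish, I would observe that $k_0$ is an infinite field interpretable in the dp-minimal field $\Cc$, hence is itself dp-minimal; and it is not strongly minimal, since $\bar{\mathcal{O}}_1$ is a definable subset of $k_0$ that is both infinite (its own maximal ideal is infinite, $k_0$ being saturated) and co-infinite (the complement of a non-trivial valuation ring in a saturated field is infinite). Therefore the entire development of \S\S\ref{sec:begin}--\ref{sec:end} applies to $k_0$ in place of $\Cc$; in particular, Lemma~\ref{no-independent}, applied to the trivial extension $k_0/k_0$, says that $k_0$ admits no two independent non-trivial definable valuation rings. This contradicts the independence of $\bar{\mathcal{O}}_1$ and $\bar{\mathcal{O}}_2$, completing the argument.

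The main obstacle is the assertion that $k_0$ is dp-minimal: a priori the finite extension $\Ll$ (and hence $\mathcal{O}_0$ and $k_0$) can carry dp-rank larger than $1$, so one must invoke the fact that an infinite field interpretable in a dp-minimal structure is again dp-minimal. Granting that, everything else is routine bookkeeping about coarsenings, joins, and residue fields of valuations, together with Remark~\ref{joining-vee-defables} and Lemma~\ref{no-independent}.
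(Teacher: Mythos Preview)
Your overall architecture matches the paper's: form the join $\mathcal{O}_0 = \mathcal{O}_1 \cdot \mathcal{O}_2$ (definable by Remark~\ref{joining-vee-defables}), pass to its residue field, observe that the images $\bar{\mathcal{O}}_1,\bar{\mathcal{O}}_2$ are independent there, and derive a contradiction from Lemma~\ref{no-independent}. The difference lies in exactly the step you flag as the ``main obstacle,'' and there the argument as written does not go through.

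The assertion ``an infinite field interpretable in a dp-minimal structure is again dp-minimal'' is false. A counterexample is $\mathbb{C}$ interpreted in the o-minimal (hence dp-minimal) field $\mathbb{R}$: with the full induced structure, $\mathbb{C}\cong\mathbb{R}^2$ has dp-rank $2$. In your situation $\Ll$ sits inside $\Cc$ as a set of dp-rank $[\Ll:\Cc]$, so $\mathcal{O}_0$ and its residue field $k_0$ may well have dp-rank $>1$ with the induced structure. Since Lemma~\ref{no-independent} and the machinery of \S\S\ref{sec:begin}--\ref{sec:end} are developed for a dp-minimal ambient field, you cannot invoke them with $k_0$ playing the role of $\Cc$.

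The paper's fix is precisely to avoid this trap: rather than working with $k_0=\Ll w$ directly, it first restricts the join valuation $w$ to $\Cc$, obtaining a definable valuation $v$ on the \emph{home sort}. The residue field $\Cc':=\Cc v$ is then a genuine quotient of a definable subset of $\Cc$, so $\dpr(\Cc')\le 1$; since $\Cc'$ carries a non-trivial definable valuation it is infinite and unstable, hence dp-minimal and not strongly minimal. Now $k_0=\Ll w$ is a \emph{finite extension} of $\Cc'$, and Lemma~\ref{no-independent} applies verbatim to the pair $k_0/\Cc'$, yielding the contradiction. In short, the missing idea is to descend to $\Cc$ before taking residues, so that the new ambient field lives in the home sort and inherits dp-minimality for free; once you insert that step, your argument becomes the paper's.
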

\begin{proof}
  Suppose $\mathcal{O}_1$ and $\mathcal{O}_2$ are incomparable.  Let
  $\mathcal{O} = \mathcal{O}_1 \cdot \mathcal{O}_2$ be their join,
  which is definable by Remark~\ref{joining-vee-defables}.  Let $w$ be
  the valuation corresponding to $\mathcal{O}$, and let $v$ be its
  restriction to $\Cc$.

  The residue field $\Ll' := \Ll w$ is a finite extension of $\Cc' :=
  \Cc v$.  Moreover, $\Ll'$ has two independent definable valuations,
  induced by $\mathcal{O}_1$ and $\mathcal{O}_2$.  This ensures that
  $\Ll'$ is infinite and unstable, so $\Cc'$ is also infinite and
  unstable.  But $\Cc'$ has dp-rank at most 1, so $\Cc'$ is a
  dp-minimal unstable field.  It is also as saturated as $\Cc$, so all
  our results so far apply to $\Cc'$.  By Lemma~\ref{no-independent},
  $\Ll'$ cannot have two independent definable valuation rings, a
  contradiction.
\end{proof}

\begin{corollary}\label{auto-hensel}
  Any definable valuation ring $\mathcal{O}$ on $\Cc$ is henselian.
\end{corollary}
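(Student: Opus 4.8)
The plan is to use the standard criterion that a valuation ring is henselian if and only if it extends uniquely to every finite separable extension — and since $\Cc$ is perfect (Observation~\ref{perfect}), ``finite separable'' is just ``finite algebraic.'' So let $\mathcal{O}$ be a definable valuation ring on $\Cc$ and let $\Ll/\Cc$ be a finite algebraic extension; I must show $\mathcal{O}$ has a unique extension to $\Ll$. If $\mathcal{O}$ is trivial there is nothing to do (a trivially valued field is henselian), so assume $\mathcal{O}$ is non-trivial. Then by Lemma~\ref{one-topology}, $\mathcal{O}$ induces the canonical topology on $\Cc$; more importantly it shows $\mathcal{O}_K$ (for a suitable small $K$) is a coarsening of $\mathcal{O}$, so $\mathcal{O}$ is non-trivial and ``compatible'' with the canonical topology.

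Next I would invoke Proposition~\ref{comparability}: any two definable valuation rings on $\Ll$ are comparable. Each of the finitely many extensions $\mathcal{O}_1,\ldots,\mathcal{O}_m$ of $\mathcal{O}$ to $\Ll$ is definable over the same parameters (by Lemma~\ref{vee-extensions}, these extensions are $\vee$-definable, and there are only finitely many, pairwise incomparable ones — so each is cut out by finitely many elements and hence definable). Applying Proposition~\ref{comparability}, the $\mathcal{O}_i$ are pairwise comparable. But distinct extensions of a single valuation ring to a finite extension are always pairwise \emph{incomparable} (none can refine another, since they all restrict to the same valuation on $\Cc$ and a proper refinement would have a strictly larger value group relative to $\Cc$). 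The only way finitely many pairwise-comparable valuation rings can also be pairwise incomparable is if there is just one of them, i.e.\ $m = 1$.

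Thus $\mathcal{O}$ extends uniquely to $\Ll$. Since $\Ll/\Cc$ was an arbitrary finite (separable, as $\Cc$ is perfect) extension, $\mathcal{O}$ is henselian. The only mildly delicate point is the reconciliation of ``comparable'' (from Proposition~\ref{comparability}) with ``incomparable'' (from general valuation theory about extensions) — but this is exactly the mechanism that forces uniqueness, and I expect it to be the crux of the (very short) argument. Everything else is bookkeeping with the results already established, principally Proposition~\ref{comparability} and the perfectness of $\Cc$.
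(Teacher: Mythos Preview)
Your approach is exactly the paper's one-line argument: a non-henselian $\mathcal{O}$ would have two incomparable extensions to some finite (Galois) extension of $\Cc$, and these contradict Proposition~\ref{comparability}. Two small remarks: the detour through Lemma~\ref{one-topology} in your first paragraph plays no role and can be dropped; and your justification that the extensions are \emph{definable} (``cut out by finitely many elements'') is not quite right --- being singled out among finitely many $\vee$-definable sets by a finite set of points does not by itself yield definability. The correct reason is that here $\mathcal{O}$, hence $\mathfrak{m}$, is outright definable, so the bounded-degree criterion in the proof of Lemma~\ref{vee-extensions} becomes a definable (rather than merely type-definable) condition, upgrading the extensions from $\vee$-definable to definable.
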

\begin{proof}
  Otherwise, $\mathcal{O}$ would have two incomparable extensions to
  some finite Galois extension of $\Cc$.
\end{proof}
Corollary~\ref{auto-hensel} was obtained independently by Jahnke,
Simon, and Walsberg (Proposition 4.5 in \cite{JSW}).

\begin{theorem}
  \label{hensel}
  The valuation ring $\mathcal{O}_K$ (whose maximal ideal is the set
  of $K$-infinitesimals) \emph{is henselian}.
\end{theorem}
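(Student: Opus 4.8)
The plan is to argue by contradiction, mimicking the proof of Corollary~\ref{auto-hensel} but taking care that $\mathcal{O}_K$ is only $\vee$-definable rather than definable. Suppose $\mathcal{O}_K$ is not henselian. Then $\mathcal{O}_K$ has two distinct extensions $\mathcal{O}_1, \mathcal{O}_2$ to some finite Galois extension $\Ll$ of $\Cc$; since distinct extensions of a valuation to a finite extension are automatically incomparable, $\mathcal{O}_1$ and $\mathcal{O}_2$ are incomparable, and they are $\vee$-definable by Lemma~\ref{vee-extensions}. Choose a small model $K' \succeq K$ large enough that $\Ll = L' \otimes_{K'} \Cc$ for some finite extension $L'/K'$ (possible because the coefficients of a primitive polynomial for $\Ll/\Cc$ form a finite set, hence lie in some small model, and such a polynomial stays irreducible over $K' \subseteq \Cc$) and that $\mathcal{O}_1,\mathcal{O}_2$ are $\vee$-definable over $K'$. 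Now $\mathcal{O}_{K'}$ is a coarsening of $\mathcal{O}_K$ (as $I_{K'}\subseteq I_K$), and by Proposition~\ref{weak-hensel} applied to $K'$ it has a \emph{unique} extension to $\Ll$. Therefore coarsening $\mathcal{O}_1$ and $\mathcal{O}_2$ far enough that they restrict to $\mathcal{O}_{K'}$ on $\Cc$ produces the same valuation ring of $\Ll$; in particular $\mathcal{O}_1$ and $\mathcal{O}_2$ share a common \emph{proper} overring. Hence the join $\mathcal{O} := \mathcal{O}_1\mathcal{O}_2$ is a proper valuation ring of $\Ll$ strictly containing both, and it is \emph{definable} by Remark~\ref{joining-vee-defables}.

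From here I would finish exactly as in Proposition~\ref{comparability}. The ring $\mathcal{O}\cap\Cc$ is a non-trivial definable valuation ring on $\Cc$ (non-trivial because a non-trivial valuation on a finite extension restricts to a non-trivial valuation on the base), so it is henselian by Corollary~\ref{auto-hensel}. Passing to the residue field $\Ll^* := \Ll/\mathfrak{m}_\mathcal{O}$, which is a finite extension of $\Cc^* := \Cc/(\mathfrak{m}_\mathcal{O}\cap\Cc)$, the rings $\mathcal{O}_1$ and $\mathcal{O}_2$ induce two non-trivial valuations on $\Ll^*$ which are \emph{independent}, precisely because $\mathcal{O}_1\mathcal{O}_2 = \mathcal{O}$. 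Since $\Cc^*$ is interpretable in $\Cc$ and infinite, it is again a saturated dp-minimal field; if it is algebraically closed the conclusion is immediate (valuations on an algebraically closed field are henselian), and otherwise $\Cc^*$ is a saturated dp-minimal non-strongly-minimal field to which every earlier result applies, so the existence of two independent definable valuations on the finite extension $\Ll^*/\Cc^*$ contradicts Lemma~\ref{no-independent}.

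The step I expect to be the main obstacle is precisely the invocation of Lemma~\ref{no-independent} at the end: because $\mathcal{O}_1$ and $\mathcal{O}_2$ are only ind-definable, the valuations they induce on $\Ll^*$ are a priori only $\vee$-definable, whereas Lemma~\ref{no-independent} is stated for definable valuation rings. One must therefore either promote these induced valuations to honestly definable valuations on $\Ll^*$ (e.g.\ by sandwiching a definable ``ball'' between the maximal ideal and the valuation ring, using saturation of $\Ll^*$ together with Proposition~\ref{finite-boundary}-style control of boundaries), or reorganize the argument as a descent: henselianity of $\mathcal{O}_K$ on $\Cc$ is equivalent to henselianity of the valuation induced by $\mathcal{O}_K$ on the residue field $\Cc^*$ of the henselian coarsening $\mathcal{O}\cap\Cc$, and that induced valuation has strictly smaller value group than $\mathcal{O}_K$, so one analyses how the two incomparable extensions must then already conflict at the level of $\Cc^*$. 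Either way, the whole point is to transfer the content of Corollary~\ref{auto-hensel} from genuinely definable valuation rings to the canonical, merely $\vee$-definable ring $\mathcal{O}_K$.
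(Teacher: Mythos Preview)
Your first half is exactly right and matches the paper: assume non-henselianity, get two incomparable extensions $\mathcal{O}_1,\mathcal{O}_2$ to a finite $\Ll/\Cc$, enlarge $K$ to $K'$ over which $\Ll$ is defined, use Proposition~\ref{weak-hensel} to see $\mathcal{O}_{K'}$ extends uniquely to $\Ll$, and conclude that $\mathcal{O}_1,\mathcal{O}_2$ are dependent, so their join is a non-trivial \emph{definable} valuation ring on $\Ll$ by Lemma~\ref{vee-extensions} and Remark~\ref{joining-vee-defables}.

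Where you go astray is after this point. You try to imitate Proposition~\ref{comparability}, pass to the residue field $\Ll^*$, and invoke Lemma~\ref{no-independent} --- and you correctly diagnose the obstacle: the valuations induced by $\mathcal{O}_1,\mathcal{O}_2$ on $\Ll^*$ are only $\vee$-definable, not definable, so Lemma~\ref{no-independent} does not literally apply. Neither of your suggested repairs (sandwiching a definable ball, or an inductive descent on the value group) is straightforward, and you have not carried either one out.

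The paper avoids this detour entirely. Once you have \emph{any} non-trivial definable valuation ring on $\Ll$, restricting to $\Cc$ gives a non-trivial definable valuation ring on $\Cc$ (definable because $\Ll$ is interpreted in $\Cc$; non-trivial because $\Ll/\Cc$ is finite). Being a non-trivial valuation ring is a first-order property of the defining parameter, so since $K \preceq \Cc$ there is already a $K$-definable non-trivial valuation ring $\mathcal{O}$ on $\Cc$. Now Corollary~\ref{auto-hensel} says $\mathcal{O}$ is henselian, and Lemma~\ref{one-topology} says $\mathcal{O}_K$ is a coarsening of $\mathcal{O}$. Coarsenings of henselian rings are henselian, contradiction. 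The whole second half of your argument is replaced by this two-line application of Lemma~\ref{one-topology}; no residue-field analysis is needed.
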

\begin{proof}
  Suppose not.  Then $\mathcal{O}_K$ has multiple extensions to some
  finite algebraic extension $\Ll/\Cc$.  Let $\mathcal{O}_1$ and
  $\mathcal{O}_2$ be two such extensions.  Let $K' \succeq K$ be a
  larger model over which the field extension $\Ll/\Cc$ is defined.
  As $I_{K'} \subseteq I_K$, we see that $\mathcal{O}_{K'}$ is a
  coarsening of $\mathcal{O}_K$.  Also, $\mathcal{O}_{K'}$ has a
  unique extension to $\Ll$ by Proposition~\ref{weak-hensel}.  As in
  the proof of Lemma~\ref{no-independent}, this ensures that
  $\mathcal{O}_1$ and $\mathcal{O}_2$ are not independent.  Their join
  $\mathcal{O}_1 \cdot \mathcal{O}_2$ is definable by
  Lemma~\ref{vee-extensions} and Remark~\ref{joining-vee-defables}.
  It is also non-trivial because $\mathcal{O}_1$ and $\mathcal{O}_2$
  aren't independent.

  So there is some definable non-trivial valuation ring on $\Cc$.  The
  property of being a valuation ring is expressed by finitely many
  sentences, and $K \preceq \Cc$, so there is a $K$-definable
  non-trivial valuation ring $\mathcal{O}$.  This ring is henselian by
  Corollary~\ref{auto-hensel}, and $\mathcal{O}_K$ is a coarsening, by
  Lemma~\ref{one-topology}.  Coarsenings of henselian valuations are
  henselian.
\end{proof}

\subsection{Summary of results so far}
In what follows, we will need only the following facts from
\S\ref{sec:begin}-\S\ref{sec:end}:
\begin{theorem} \label{summary}
  Let $K$ be a dp-minimal field.
  \begin{enumerate}
  \item $K$ is perfect.
  \item If $K$ is sufficiently saturated and not algebraically closed,
    then $K$ admits a non-trivial Henselian valuation (not necessarily
    definable).
  \item Any definable valuation on $K$ is henselian.  Any two
    definable valuations on $K$ (or any finite extension of $K$) are
    comparable.
  \item For any $n$, the cokernel of the $n$th power map $K^\times \to
    K^\times$ is finite.
  \end{enumerate}
\end{theorem}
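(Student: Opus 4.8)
The plan is to assemble the statement from results already proven in \S\ref{sec:begin}--\S\ref{sec:end}, after reconciling the standing hypothesis of those sections (``not strongly minimal'') with the hypothesis appearing here (``not algebraically closed''). The bridge is the standard fact that a strongly minimal field is $\omega$-stable, hence algebraically closed by Macintyre's theorem, so that a dp-minimal field which is not algebraically closed is not strongly minimal. Part (1) will then be immediate: a dp-minimal field has dp-rank $1$, hence finite dp-rank, hence is perfect by Observation~\ref{perfect}; this needs no saturation.

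For part (2) I would take $K$ sufficiently saturated and not algebraically closed, hence not strongly minimal, and run the entire development of \S\ref{sec:begin}--\S\ref{sec:end} with $K$ itself in the role of the ambient saturated field $\Cc$. Fixing a small elementary submodel $K_0 \preceq K$, Corollary~\ref{infinitesimal-sums} gives that the set $I_{K_0}$ of $K_0$-infinitesimals is a nontrivial additive subgroup (nontriviality is condition~\ref{t1}, and $K$ is saturated enough to realize the relevant partial type), Theorem~\ref{v-top} gives that $I_{K_0}$ is the maximal ideal of a valuation ring $\mathcal{O}_{K_0}$ on $K$, and Theorem~\ref{hensel} gives that this valuation ring is henselian. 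That is the desired nontrivial henselian valuation.

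For part (3), if $K$ is algebraically closed the only definable valuation ring is the trivial one (definable subsets of a strongly minimal field are finite or cofinite, while a nontrivial valuation ring is neither), and finite extensions are again algebraically closed, so the claim is vacuous. Otherwise $K$ is not strongly minimal; I would pass to a monster $\Cc \succeq K$, still dp-minimal and not strongly minimal (strong minimality depends only on the complete theory), apply Corollary~\ref{auto-hensel} (every definable valuation ring on $\Cc$ is henselian) and Proposition~\ref{comparability} (any two definable valuation rings on a finite extension of $\Cc$ are comparable), and transfer both down: henselianity of a definable valued field is a scheme of first-order sentences, comparability of two definable valuation rings is a single first-order sentence, and a finite extension of $\Cc$ is $L \otimes_K \Cc$ for a finite $L/K$ interpreted in $K$.

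Part (4) is where I expect the only real work. In the algebraically closed case it is trivial, and in general one may pass to a saturated model, prove finiteness of $K^\times/(K^\times)^n$ there, and transfer the first-order statement ``$[K^\times:(K^\times)^n]\le m$'' back down. Using the henselian valuation $\mathcal{O}_{K_0}$ from part (2): when the residue characteristic does not divide $n$, Hensel's lemma makes the principal units $n$-divisible and a short computation identifies $[K^\times:(K^\times)^n]$ with $[\Gamma:n\Gamma]\cdot[k^\times:(k^\times)^n]$ for $\Gamma$ the value group and $k$ the residue field; both have dp-rank at most $1$, so $\Gamma$ is a dp-minimal ordered abelian group (whence $[\Gamma:n\Gamma]<\infty$) and $k$ is a dp-minimal field to which the same reduction applies, while the residue-characteristic-dividing-$n$ case is dealt with via the absence of Artin--Schreier (and Artin--Schreier--Witt) extensions in infinite NIP fields of positive characteristic. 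The hard part here is organizing this reduction cleanly and making the recursion on residue fields terminate---equivalently, establishing that dp-minimal fields are bounded; parts (1)--(3), by contrast, are essentially bookkeeping on top of the results already in hand.
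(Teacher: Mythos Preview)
Parts (1)--(3) match the paper's proof up to cosmetic differences: the paper splits on ``strongly minimal'' versus ``not strongly minimal'' and disposes of the strongly minimal case by noting that such a field is NSOP (hence admits no nontrivial definable valuation), while you split on ``algebraically closed'' versus not; for pure fields the two dichotomies coincide via Macintyre.

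Part (4) is where you diverge, and your proposed route has a genuine gap. The henselian valuation $\mathcal{O}_{K_0}$ produced in part (2) is only $\vee$-definable, not definable, so its value group $\Gamma$ and residue field $k$ are not interpretable in $K$; you therefore cannot directly conclude that they have dp-rank at most $1$, and your appeal to ``$\Gamma$ is a dp-minimal ordered abelian group (whence $[\Gamma:n\Gamma]<\infty$)'' and ``$k$ is a dp-minimal field to which the same reduction applies'' is unjustified. Even granting dp-minimality, you would still need the external fact that dp-minimal ordered abelian groups have finite $\Gamma/n\Gamma$ (not proven in the paper) and a terminating recursion on residue fields---precisely the difficulty you flag at the end.

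The paper instead gives a short direct argument from Corollary~\ref{few-infinitesimals}. Assuming $K^\times/(K^\times)^n$ is infinite, pass to an elementary extension $M$ in which this quotient has cardinality exceeding the bound on the number of infinitesimal types (using Lemma~\ref{heir} to see that this bound transfers between models). For any $a \in M^\times$ and any nonzero $M$-infinitesimal $\epsilon$, the element $a\epsilon^n$ is a nonzero $M$-infinitesimal in the coset $a(M^\times)^n$. Since $(M^\times)^n$ is $M$-definable, infinitesimals in distinct cosets have distinct types over $M$, producing more infinitesimal types over $M$ than the bound allows. This one-step counting argument replaces your entire valuation-theoretic reduction.
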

\begin{proof}~
  \begin{enumerate}
  \item Observation~\ref{perfect}.
  \item If $K$ is strongly minimal, then $K$ is algebraically closed
    by a well-known theorem of Macintyre.  Otherwise, this is
    Theorem~\ref{hensel}.
  \item If $K$ isn't strongly minimal, this is
    Proposition~\ref{comparability} and Corollary~\ref{auto-hensel}.
    Otherwise, $K$ is NSOP, so has only the trivial valuation.
  \item If $K$ is strongly minimal, then $K$ is algebraically closed
    (Macintyre), so the cokernels are always trivial.  If
    $K^\times/(K^\times)^n$ is infinite, we can find some elementary
    extension $M \succeq K$ such that $M^\times/(M^\times)^n$ is
    greater in cardinality than the total number of infinitesimal
    types over $\Cc$, by Corollary~\ref{few-infinitesimals}.  By
    Lemma~\ref{heir}, heirs of infinitesimal types are infinitesimal
    types, so $\Cc$ has at least as many infinitesimal types as $M$,
    and therefore the cardinality of $M^\times/(M^\times)^n$ exceeds
    the number of infinitesimal types over $M$.  Now for any $a \in
    M^\times$, and any $M$-infinitesimal $\epsilon$, the element $a
    \cdot \epsilon^n$ is an $M$-infinitesimal in the same coset as
    $a$.  So there are $M$-infinitesimals in every coset of
    $(M^\times)^n$, contradicting the choice of $M$.
  \end{enumerate}
\end{proof}

\section{The proof of Theorem~\ref{main-result}} \label{sec:six}

\subsection{Review of Jahnke-Koenigsmann}
First we review some facts and definitions from \cite{JK}.

Following \cite{JK}, if $K$ is any field, let $K(p)$ denote the
compositum of all $p$-nilpotent Galois extensions of $K$.  Let's say
that $K$ is ``$p$-closed'' if $K = K(p)$.  The map $K \mapsto K(p)$ is
a closure operation on the subfields of $K^{alg}$.  By an analogue of
the Artin-Schreier theorem, if $[K(p) : K]$ is finite, then $K$ is
$p$-closed or orderable.  Say that a field $K$ is ``$p$-jammed'' if no
finite extension is $p$-closed.

\begin{remark} \label{p-jam}
  If $K$ is not real closed or separably closed, then $K$ has a finite
  extension which is $p$-jammed for some prime $p$.
\end{remark}
\begin{proof}
  Replace $K$ with $K(\sqrt{-1})$ in characteristic 0.  Take some
  non-trivial finite Galois extension $L/K$.  Take $p$ dividing
  $|\Gal(L/K)|$.  By Sylow theory there is some intermediate field $K
  < F < L$ such that $L/F$ is a $p$-nilpotent Galois extension.  Then
  $F(p) \ne F$, so $[F(p) : F] = \infty$ because $F$ isn't orderable.
  No finite extension $F'$ of $F$ will contain $F'(p) \supseteq F(p)$,
  so $F$ is $p$-jammed.
\end{proof}

Following \cite{JK}, a valuation on a field $K$ is
\emph{$p$-henselian} if it has a unique extension to $K(p)$.  On any
field $K$, there is a \emph{canonical $p$-henselian valuation}
$v^p_K$.  If the residue field $K v^p_K$ is not $p$-closed, then
$v^p_K$ is the finest $p$-henselian valuation on $K$.  By Main Theorem
3.1 of \cite{JK}, $v^p_K$ is 0-definable, provided that $X^{p^2}-1$
splits in $K$, and $K(p) \ne K$.

\subsection{Applying Jahnke-Koenigsmann} \label{sec:jkapp}

\begin{theorem} \label{jk-app}
  Let $K$ be a sufficiently saturated dp-minimal field.  Let
  $\mathcal{O}_\infty$ be the intersection of all the definable
  valuation rings on $K$.  (So $\mathcal{O}_\infty = K$ if $K$ admits
  no definable non-trivial valuations.)
  \begin{enumerate}
  \item $\mathcal{O}_\infty$ is a henselian valuation ring on $K$
  \item $\mathcal{O}_\infty$ is type-definable, without parameters.
  \item \label{tres} The residue field of $\mathcal{O}_\infty$ is
    finite, real-closed, or algebraically closed.  If it is finite,
    then $\mathcal{O}_\infty$ is definable.
  \end{enumerate}
\end{theorem}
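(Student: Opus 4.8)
The plan is to treat the three parts in order, building on the structural results already collected in Theorem~\ref{summary}.

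For part (1): I would first observe that the collection of definable valuation rings on $K$ is totally ordered by inclusion. Indeed, by part (3) of Theorem~\ref{summary}, any two definable valuations on $K$ are comparable, so the definable valuation rings form a chain under inclusion. The intersection $\mathcal{O}_\infty$ of a chain of valuation rings on a field is again a valuation ring (this is standard: an intersection of a chain of valuation rings with the same fraction field is a valuation ring). So $\mathcal{O}_\infty$ is a valuation ring on $K$. For henselianity, I would use that each definable valuation ring is henselian (part (3) of Theorem~\ref{summary} again, via Corollary~\ref{auto-hensel}), and that an intersection of a chain of henselian valuation rings is henselian — equivalently, $\mathcal{O}_\infty$ is a common coarsening or refinement situation; more carefully, $\mathcal{O}_\infty$ is the ``finest common coarsening'' isn't quite right, so I would instead argue directly that Hensel's lemma for $\mathcal{O}_\infty$ follows by a compactness/approximation argument from Hensel's lemma for the members of the chain, or cite that a valuation ring that is an intersection of a chain of henselian valuation rings on the same field is henselian.

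For part (2): the key point is that there are only \emph{boundedly many} definable valuation rings on $K$, so the intersection defining $\mathcal{O}_\infty$ is really an intersection of a bounded (indeed, for each formula, a set parametrized by a definable family) family of definable sets, hence type-definable. More precisely, for each formula $\phi(x;y)$, the set of parameters $b$ such that $\phi(K;b)$ is a non-trivial valuation ring is definable (being a valuation ring is a first-order condition), and $\mathcal{O}_\infty$ is the intersection over all such formulas $\phi$ and all such $b$; since there are only countably many formulas and the relevant parameter sets are definable, $\mathcal{O}_\infty$ is $\bigwedge$-definable over $\emptyset$. I would spell out that $x \in \mathcal{O}_\infty$ iff for every formula $\phi(x;y)$ and every $b$ with $\phi(K;b)$ a valuation ring, $\phi(x;b)$ holds — a type-definable condition without parameters, by invariance under $\Aut(K)$.

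For part (3), which I expect to be the main obstacle: let $k_\infty$ be the residue field of $\mathcal{O}_\infty$. The first thing to note is that $k_\infty$ is again dp-minimal (residue fields of valuations on dp-minimal fields, under suitable type-definability, inherit dp-minimality — this needs $\mathcal{O}_\infty$ to be type-definable, hence part (2) is used here). Now $k_\infty$ admits \emph{no non-trivial definable valuation}: any definable valuation on $k_\infty$ would pull back, via the place, to a definable valuation on $K$ strictly finer than $\mathcal{O}_\infty$ (using Remark~\ref{p-jam}-style composition of valuations and the fact that such a composite is again definable), contradicting minimality of $\mathcal{O}_\infty$. Next, if $k_\infty$ is not real closed or separably closed, then by Remark~\ref{p-jam} it has a finite extension $F$ which is $p$-jammed for some $p$; passing to $F$ (still dp-minimal, still no non-trivial definable valuation by Proposition~\ref{comparability}), the canonical $p$-henselian valuation $v^p_F$ is then \emph{definable} by the Jahnke--Koenigsmann Main Theorem (after adjoining $p^2$-th roots of unity, harmless since $K^\times/(K^\times)^n$ is finite by Theorem~\ref{summary}(4), so these are $\acl$-generated and the extension is finite) — but $v^p_F$ is non-trivial since $F$ is $p$-jammed (so $F(p) \ne F$ and the residue field is not $p$-closed), contradicting that $F$ has no non-trivial definable valuation. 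Hence $k_\infty$ is real closed or separably closed, and being perfect (Theorem~\ref{summary}(1)) it is real closed or algebraically closed — unless it is finite, which is the remaining case. Finally, if $k_\infty$ is finite, then $\mathcal{O}_\infty$ has finite residue field; I would argue that a henselian valuation with finite residue field on a field is detected definably (e.g. the valuation ring is the set of $x$ such that $1 + x \cdot \mathfrak{m}$-type conditions, or more simply: over a finite residue field the valuation ring is $\bigwedge$-definable \emph{and} $\bigvee$-definable, hence definable by compactness and saturation), giving definability of $\mathcal{O}_\infty$ in that case. The delicate point throughout is making sure that ``definable valuation on the residue field lifts to a definable valuation on $K$'' is valid, which requires that $\mathcal{O}_\infty$ itself is type-definable (part 2) so that the residue field is an interpretable structure and composition of valuations stays within the definable category.
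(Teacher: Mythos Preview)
Parts (1) and (2) are fine and essentially match the paper's argument.

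Part (3) has a genuine gap. Your whole argument runs inside the residue field $k_\infty = \mathcal{O}_\infty/\mathfrak{m}_\infty$, treating it as an interpretable dp-minimal structure in which you can speak of ``definable valuations'' and to which you can apply Jahnke--Koenigsmann. But $\mathcal{O}_\infty$ is only \emph{type}-definable and $\mathfrak{m}_\infty$ is only $\vee$-definable, so $k_\infty$ is a hyperimaginary sort, not an imaginary one; it is not clear that it inherits dp-minimality or saturation, nor that ``definable subset of $k_\infty$'' has the meaning you need. Worse, your pullback step fails outright: composing the (only type-definable) place $K \to k_\infty$ with a definable valuation on $k_\infty$ yields at best a type-definable valuation on $K$, which does not contradict the minimality of $\mathcal{O}_\infty$ among \emph{definable} valuation rings. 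Finally, the claim that $v^p_F$ is non-trivial ``since $F$ is $p$-jammed'' is unjustified: $p$-jammedness says $F(p)\ne F$, but Jahnke--Koenigsmann only tells you $v^p_F$ is \emph{definable}, not that it is non-trivial; for that you would need to exhibit some non-trivial $p$-henselian valuation on $F$, which is exactly the missing ingredient.

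The paper avoids all of this by never descending to $k_\infty$ directly. It works instead with genuinely \emph{definable} coarsenings $v_1 \le v_2 \le v_\infty$: choose a definable $v_1$ with the right residue characteristic, pass to a finite extension $L/K$ so that $Lv_1$ has enough roots of unity, apply Jahnke--Koenigsmann to the interpretable field $Lv_1$ to get a definable $v_2$, and then---this is the key step you are missing---apply Theorem~\ref{summary}(2) to the interpretable, saturated, dp-minimal field $Kv_2$ (not to $k_\infty$!) to produce a non-trivial henselian valuation on $Lv_2$, contradicting the canonicity of the $p$-henselian valuation $Lv_1 \to Lv_2$. The finite-residue case is also handled differently and more cleanly: if $Kv$ is finite for some definable $v$ then the place $Kv \to Kv_\infty$ is forced to be trivial, so $v_\infty = v$ is already definable.
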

\begin{proof}
  \begin{enumerate}
  \item By Theorem~\ref{summary}(3), the class of definable valuation
    rings on $K$ is totally ordered.  An intersection of a chain of
    valuation rings is a valuation ring.  An intersection of a chain
    of henselian valuation rings is henselian.
  \item We need to show that $\mathcal{O}_\infty$ is a small
    intersection.  Suppose $\mathcal{O}$ is a definable valuation ring
    on $K$, defined by a formula $\phi(K;b)$.  Let $\psi(x)$ be the
    formula asserting that $\phi(K;x)$ is a valuation ring.  Then
    $\bigcap_{b \in \psi(K)} \phi(K;b)$ is a $\emptyset$-definable
    valuation ring contained in $\mathcal{O}$.  Thus every definable
    valuation ring on $K$ contains a 0-definable valuation ring.
    Therefore $\mathcal{O}_\infty$ is the intersection of the
    0-definable valuation rings on $K$.  It is therefore
    type-definable over $\emptyset$.
  \item Let $v_\infty$ denote the valuation on $K$ corresponding to
    $\mathcal{O}_\infty$.  If $v$ is a valuation on $K$, let $Kv$
    denote the residue field of $v$.  If $v$ is henselian and $L/K$ is
    a finite extension, write the residue field of $v$'s unique
    extension to $L$ as $Lv$, by abuse of notation.

    We'll refer to a map $K \cup \{\infty\} \to k \cup \{\infty\}$
    coming from a residue map, as a ``place'' from $K$ to $k$.

    Write $v \le v'$ to indicate that $v$ is a coarsening of $v'$.  In
    this case, there are places $Kv \to Kv'$ and even $Lv \to Lv'$.
    If $v$ is definable, then $v \le v_\infty$ by choice of
    $\mathcal{O}_\infty$.

    If $L/K$ is a finite extension, the definable valuations on $L$
    are totally ordered, and in strict order-preserving bijection with
    the definable valuations on $K$, essentially by
    Theorem~\ref{summary}.

    \begin{remark} \label{confusion}
      If $v$ is definable and $Lv \to k$ is a definable place, then
      $Lv \to k$ is equivalent to $Lv \to Lv'$ for some definable $v'
      \ge v$ on $K$.
    \end{remark}
    (To see this, compose the places $L \to Lv \to k$ to get a
    definable valuation on $L$; let $v'$ be its restriction to $K$.)

    With these preliminaries out of the way, we first show that $Kv_\infty$ is
    perfect.  Suppose $Kv_\infty$ has characteristic $p$.  Then $Kv_1$
    has characteristic $p$ for some definable valuation $v_1$.
    (Otherwise, $K$ has characteristic 0 and $1/p \in \mathcal{O}$ for
    every definable valuation ring on $K$.  So $1/p \in
    \mathcal{O}_\infty$, and $Kv_\infty$ has characteristic 0.)  The
    field $Kv_1$ is finite or dp-minimal, so by Theorem~\ref{summary},
    it is perfect.  The place $Kv_1 \to Kv_\infty$ now ensures that
    $Kv_\infty$ is perfect as well.

    Now we turn to proving part \ref{tres} of the theorem.  First
    suppose that $Kv$ is finite for some definable $v$.  Finite fields
    have only the trivial valuation, so the place $Kv \to Kv_\infty$
    must be the identity map.  This forces $v_\infty = v$, in which
    case statement \ref{tres} holds.

    So, assume that $Kv$ is infinite for all definable valuations $v$.
    We will show that $Kv_\infty$ is real closed or algebraically
    closed.  Suppose not.  Then some finite extension of $Kv_\infty$
    is $p$-jammed by Remark~\ref{p-jam}, for some prime $p$.  It
    follows that $Lv_\infty(p) \ne Lv_\infty$ for all sufficiently big
    finite extensions of $K$.

    Let $v_1$ be a definable valuation on $K$ such that $Kv_1 \to
    Kv_\infty$ is pure characteristic. (We saw that such a $v_1$ must
    exist, when proving that $Kv_\infty$ is perfect.)  Let $L/K$ be a
    finite extension that is sufficiently big, so that
    \begin{itemize}
    \item $Lv_1$ has all the $p^2$th roots of unity.
    \item $Lv_\infty$ is not $p$-closed.
    \end{itemize}
    By the main theorem of $\cite{JK}$, the canonical $p$-henselian
    valuation on $Lv_1$ is definable.  By Remark~\ref{confusion}, the
    canonical $p$-henselian place is $Lv_1 \to Lv_2$ for some $v_2 >
    v_1$.  By definition of the canonical $p$-henselian place, either
    $Lv_1 \to Lv_2$ is the finest $p$-henselian place on $Lv_1$, or
    $Lv_2$ is $p$-closed.

    As $Lv_1$ has all the $p^2$th roots of unity, the same holds for
    $Lv_2$ and $Lv_\infty$.  Consequently, $p$-closedness is
    equivalent to surjectivity of the $p$th power map or surjectivity
    of the Artin-Schreier map (depending on the characteristic).  By
    choice of $v_1$, the fields $Lv_1, Lv_2, Lv_\infty$ all have the
    same characteristic.  Consequently, the place $Lv_2 \to Lv_\infty$
    ensures that
    \begin{equation*}
      Lv_2 = Lv_2(p) \implies Lv_\infty = Lv_\infty(p)
    \end{equation*}
    As $Lv_\infty$ is not $p$-closed, neither is $Lv_2$.  Therefore
    $Lv_1 \to Lv_2$ is the finest $p$-henselian valuation on $Lv_1$.

    By assumption, $Kv_2$ is infinite.  So it has dp-rank 1.  If
    $Kv_2$ is algebraically closed, then so is $Lv_2$; but we just
    showed that $Lv_2$ is not $p$-closed.  So $Kv_2$ is a dp-minimal
    field which is not algebraically closed.  By
    Theorem~\ref{summary}(2), $Kv_2$ and its finite extension $Lv_2$
    admit non-trivial henselian valuations.  So there is some
    non-trivial henselian place $Lv_2 \to k$.  The place $L \to Lv_2$
    is henselian by Theorem~\ref{summary}(3), so $Lv_1 \to Lv_2$ is
    henselian.  Compositions of henselian places are henselian, so
    $Lv_1 \to Lv_2 \to k$ is henselian, hence $p$-henselian.  Then
    $Lv_1 \to Lv_2 \to k$ is a $p$-henselian place that is strictly
    finer than $Lv_1 \to Lv_2$, contradicting the canonicity of $Lv_1
    \to Lv_2$.
  \end{enumerate}
\end{proof}

\subsection{Wrapping up} \label{sec:ksw-app}
In what follows, we will repeatedly use the \emph{Shelah expansion}.
If $M$ is an NIP structure, $M^{sh}$ denotes the expansion of $M$ by
all externally definable sets.  By \cite{NIPguide} Proposition 3.23,
$M^{sh}$ eliminates quantifiers.  Using this, it is easy to check that
$M^{sh}$ is dp-minimal when $M$ is dp-minimal.

In particular, if $K$ is our sufficiently saturated dp-minimal field,
then $K^{sh}$ is also dp-minimal (though probably no longer
saturated).

\begin{definition} \label{rough}
  A valuation $v : K \to \Gamma$ is \emph{roughly $p$-divisible} if
  $[-v(p),v(p)] \subset p \cdot \Gamma$, where $[-v(p),v(p)]$ denotes
  $\{0\}$ in pure characteristic 0, denotes $\Gamma$ in pure
  characteristic $p$, and denotes $[-v(p),v(p)]$ in mixed
  characteristic.
\end{definition}

\begin{remark} \label{2-o-3}
  Let $P$ be one of the following properties of valuation data:
  \begin{itemize}
  \item Roughly $p$-divisible
  \item Henselian
  \item Henselian and defectless
  \item Every countable chain of balls has non-empty intersection
  \end{itemize}
  If $K_1 \to K_2$ and $K_2 \to K_3$ are places, the composition $K_1
  \to K_3$ has property $P$ if and only if each of $K_1 \to K_2$ and
  $K_2 \to K_3$ has property $P$.  (In each case, this is
  straightforward to check.)
\end{remark}

\begin{definition}\label{ntdwp}
  Say that a field $K$ has \emph{nothing to do with $p$} if $p$ does
  not divide $[L : K]$ for every finite extension $L$.
\end{definition}

\begin{remark} \label{no-p} ~
  \begin{enumerate}
  \item By Corollary 4.4 of \cite{NIPfields} and
    Theorem~\ref{summary}.1, any dp-minimal field of characteristic
    $p$ has nothing to do with $p$.
  \item If $K$ has nothing to do with $p$, then any henselian
    valuation on $K$ with residue characteristic $p$ is defectless,
    and has $p$-divisible value group.
  \end{enumerate}
\end{remark}
  
\begin{lemma} \label{temp-defect}
  Let $(K,v)$ be a mixed characteristic henselian field, having
  dp-rank 1 as a valued field.  Then $v$ is defectless.  If absolute
  ramification is unbounded, then $v$ is roughly $p$-divisible, where
  $p$ is the residue characteristic.
\end{lemma}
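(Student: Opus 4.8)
The plan is to reduce the statement to a claim about a single "small" piece of the valuation — the coarsening whose value group is $\Zz_{(p)}$-scaled, i.e. the finest coarsening with $p$-divisible complement — and then exploit the dp-rank 1 hypothesis on $(K,v)$ to rule out defect and large ramification. First I would set up the relevant coarsenings. Let $w$ be the coarsening of $v$ by the largest convex subgroup $\Delta \leq \Gamma$ not containing $v(p)$; then $w$ has mixed characteristic, residue characteristic $p$, and $w(p)$ generates a cofinal (archimedean) piece of $w$'s value group, while the residue field $Kw$ carries a residue valuation $\bar v$ of pure characteristic $0$ over an equicharacteristic-$0$ tail. By Remark~\ref{2-o-3}, $v$ is defectless (resp. roughly $p$-divisible) iff both $w$ and $\bar v$ are, and $\bar v$ has pure residue characteristic $0$, so it is automatically defectless and roughly $p$-divisible. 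Thus it suffices to treat $w$: I may as well assume from the start that $[-v(p),v(p)]$ is cofinal in $\Gamma$ — i.e. the value group is "finitely ramified up to archimedean equivalence."

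Next I would handle defectlessness. Since $(K,v)$ has dp-rank $1$, the value group $\Gamma$ and residue field $Kv$ each have finite dp-rank as well (they are interpretable, or at least stably embedded quotients, so this follows from the dp-rank facts in \S\ref{sec:dpr}, in particular item (9)). A henselian valued field of mixed characteristic fails to be defectless only if some finite (necessarily wildly ramified, degree divisible by $p$) extension $L/K$ has $[L:K] > e(L/K) f(L/K)$. By Remark~\ref{no-p}(1) applied to $Kv$ — which is a dp-minimal field of characteristic $p$ — $Kv$ has nothing to do with $p$; hence by Remark~\ref{no-p}(2) every henselian valuation on $Kv$ with residue characteristic $p$ is defectless with $p$-divisible value group. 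But the classical structure theory of defect (Ostrowski / the "defectless at $p$" criterion) reduces the defect of $(K,v)$ to the behaviour of the associated coarse-fine decomposition: the fine part lives over $Kw$ and over its residue valuation, which are themselves dp-minimal fields, and iterating pushes the potential defect down to a residue field of characteristic $p$ with nothing to do with $p$, which cannot support defect. I would spell this out using the fact (Remark~\ref{2-o-3}) that defectlessness is detected on the composition factors of any decomposition of $v$ into coarsenings.

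Finally, the roughly $p$-divisible clause. Assume absolute ramification is unbounded, i.e. $v(p)$ is not bounded by any multiple of a minimal positive element — equivalently there is no convex subgroup generated (up to finite index) by $v(p)$; combined with the reduction above, this means $[-v(p),v(p)] = \Gamma$, so I must show $\Gamma = p\Gamma$. Suppose not; then $\Gamma/p\Gamma \neq 0$. The residue field $Kv$ has characteristic $p$ and nothing to do with $p$; by Remark~\ref{no-p} any henselian valuation on $Kv$ with residue characteristic $p$ already has $p$-divisible value group, so the obstruction to $p$-divisibility of $\Gamma$ comes purely from the "$v(p)$-archimedean" layer, which is $K$ itself modulo a pure-characteristic-$0$ coarsening. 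On that layer, $p$-divisibility of the value group is forced by henselianity together with the fact that, in mixed characteristic with $v(p)$ cofinal and unbounded absolute ramification, the $p$-th power map on $1+\mathfrak m$ is surjective onto the appropriate subgroup — this is where the "nothing to do with $p$" input from the characteristic-$p$ residue field gets transported up via Hensel's lemma (lifting $p$-th roots), and it yields $v(p) \in p\Gamma$ and then, by cofinality, $\Gamma = p\Gamma$.

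The main obstacle I expect is the bookkeeping of the coarsening/fine-valuation decomposition in mixed characteristic: making precise exactly which composition factor is "the one that has nothing to do with $p$" and checking that defect and $p$-divisibility genuinely propagate along the places as claimed in Remark~\ref{2-o-3}, rather than getting trapped in the wildly ramified layer. The clean way around it is to phrase everything in terms of the standard decomposition $v = \bar v \circ w$ with $w$ the coarsest coarsening of residue characteristic $p$ — so that $Kw$ has residue characteristic $p$, $w$ has equicharacteristic-$0$-to-$p$ type with $p$-divisible-or-not value group, and $\bar v$ on $Kw$ is equicharacteristic $0$ hence harmless — and then invoke Remark~\ref{no-p} on $Kw$, which is a dp-minimal (or finite) field of characteristic $p$.
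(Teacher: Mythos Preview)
Your overall strategy---decompose $v$ into layers and apply Remark~\ref{no-p} to the characteristic-$p$ piece---matches the paper's, but there are three concrete gaps that the paper fills with ideas you do not supply.

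\textbf{The decomposition is miswired.} If $\Delta_0$ is the largest convex subgroup not containing $v(p)$ and $w$ is the coarsening by $\Delta_0$, then $Kw$ has characteristic $p$ and the induced valuation $\bar v$ on $Kw$ is equicharacteristic $p$, not $0$; so your claim that ``$\bar v$ has pure residue characteristic $0$, so it is automatically defectless'' is false. The paper instead introduces \emph{two} convex subgroups $\Delta_0 \subsetneq \Delta$ (the smallest convex subgroup containing $v(p)$) and factors $K \to K_1 \to K_2 \to K_3$ into an equicharacteristic-$0$ piece, a mixed-characteristic archimedean piece, and an equicharacteristic-$p$ piece. Crucially, $\Delta_0$ and $\Delta$ are only \emph{externally} definable, so the paper passes to the Shelah expansion $K^{sh}$ (which remains dp-minimal) to make $K_2$ interpretable in a dp-minimal structure; without this step you cannot conclude that $K_2$ is dp-minimal.

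\textbf{Defectlessness of the mixed-characteristic layer.} Your sketch asserts that defect can be ``pushed down'' to the characteristic-$p$ residue field, but a henselian mixed-characteristic valuation with archimedean value group can carry defect on its own. The paper handles the middle layer $K_1 \to K_2$ by first passing to a saturated model: then $K \to K_3$ (hence $K_1 \to K_2$, by Remark~\ref{2-o-3}) has the countable-chains-of-balls property, and since $\Delta/\Delta_0$ is archimedean (cofinality $\aleph_0$), $K_1 \to K_2$ is spherically complete, hence defectless. You have no substitute for this argument.

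\textbf{Rough $p$-divisibility.} Your proposed use of Hensel's lemma to lift $p$-th roots does not work in residue characteristic $p$: the derivative of $X^p - a$ lies in $\mathfrak{m}$, and ``nothing to do with $p$'' on the residue field does not by itself produce $v(p) \in p\Gamma$ upstairs. The paper's argument is entirely different and is the real point of the lemma. One first shows $\Delta_0$ is $p$-divisible (since $K_2$ is NIP of characteristic $p$, its value group $\Delta_0$ is $p$-divisible by \cite{NIPfields}). Now let $\Delta_p$ be the \emph{largest} $p$-divisible convex subgroup of $\Gamma$: this is definable in $(K,v)$, whereas under unbounded ramification $\Delta_0$ is nontrivial and \emph{not} definable (only externally so). Hence $\Delta_p \supsetneq \Delta_0$, and since $\Delta$ is the smallest convex subgroup strictly above $\Delta_0$, we get $\Delta \subseteq \Delta_p \subseteq p\Gamma$, i.e. $[-v(p),v(p)] \subseteq p\Gamma$. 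This definability trick is the missing idea.
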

Here, ``absolute ramification is unbounded'' means that the interval
$[-v(p),v(p)]$ is infinite in the value group.  
\begin{proof}
  Both conditions (defectlessness and rough $p$-divisibility) are
  first-order, so we may assume $(K,v)$ is saturated.  Let $\Gamma$ be
  the value group.  Let $\Delta_0$ be the biggest convex subgroup not
  containing $v(p)$, and $\Delta$ be the smallest convex subgroup
  containing $v(p)$.  (So, $\Delta_0$ is non-trivial exactly if
  absolute ramification is unbounded.)  If $K_3$ denotes the residue
  field of $K$, then these two convex subgroups decompose the place $K
  \to K_3$ as a composition of three henselian places:
  \begin{equation} \label{triple-composition}
    K \stackrel{\Gamma/\Delta}{\rightarrow} K_1
    \stackrel{\Delta/\Delta_0}{\rightarrow} K_2
    \stackrel{\Delta_0}{\rightarrow} K_3
  \end{equation}
  where each arrow is labeled by its value group.  The fields $K$ and
  $K_1$ have characteristic zero, and $K_2$ and $K_3$ have
  characteristic $p$.

  Both $\Delta_0$ and $\Delta$ are externally definable, hence
  definable in the dp-minimal field $K^{sh}$.  So the above sequence
  of places is interpretable in $K^{sh}$.  In particular, $K_2$ is a
  dp-minimal field of characteristic $p$, so $K_2$ has nothing to do
  with $p$.  By Remark~\ref{no-p}, the place $K_2 \to K_3$ is
  defectless.

  The place $K_1 \to K_2$ is defectless because it is spherically
  complete.  To see this, note that $K \to K_3$ has the countable
  chains of balls condition of Remark~\ref{2-o-3}.  Therefore so does
  $K_1 \to K_2$.  But the value group of $K_1 \to K_2$ is
  $\Delta/\Delta_0$ which is archimedean, hence has cofinality
  $\aleph_0$.  It follows that \emph{any} chain of balls has non-empty
  intersection, so $K_1 \to K_2$ is spherically complete, which
  implies henselian+defectless.

  Finally, the place $K_1 \to K_2$ is henselian defectless because is
  is equicharacteristic 0.  So, each of the three places in
  (\ref{triple-composition}) is henselian and defectless.  Therefore
  their composition $K \to K_3$ is defectless, by Remark~\ref{2-o-3}.

  Now suppose that absolute ramification is unbounded.  We first claim
  that $\Delta_0$ is $p$-divisible.  Indeed, by considering $K^{sh}$,
  one sees that $K_2$ is a NIP field, so the value group of $K_2 \to K_3$ must be $p$-divisible by Proposition 5.4 of \cite{NIPfields}.

  Let $\Delta_p$ be the largest $p$-divisible convex subgroup of
  $\Gamma$.  The group $\Delta_p$ is definable (in $K$), and it
  contains $\Delta_0$.  By unbounded ramification, $\Delta_0$ is
  \emph{not} definable (in $K$), so $\Delta_p$ is strictly bigger than
  $\Delta_0$.  Since $\Delta$ is the smallest convex group strictly
  bigger than $\Delta_0$, it follows that $\Delta_p \supseteq \Delta$
  which means $v$ is roughly $p$-divisible.
\end{proof}
This Lemma is actually true if we replace ``dp-rank 1'' with
``strongly dependent,'' which is also preserved by Shelahification,
and implies field perfection.

Now we complete the proof of Theorem~\ref{main-result}.
\begin{proof}[Proof (of Theorem~\ref{main-result})]
  Let $K$ be a sufficiently saturated dp-minimal field.

  We need to produce a valuation $v$ satisfying the conditions of
  Theorem~\ref{thm-obnoxious}:
  \begin{itemize}
  \item For every $n$, $|\Gamma/ n \Gamma|$is finite.
  \item The residue field is algebraically closed, or elementarily
    equivalent to a local field of characteristic zero.
  \item The valuation is defectless
  \item The valuation is roughtly $p$-divisible
    (Definition~\ref{rough}).
  \end{itemize}

  Any valuation on $K$ will automatically satisfy the first condition,
  by Theorem~\ref{summary}.4, so we will henceforth ignore it.

  Consider the valuation $v_\infty$ from Theorem~\ref{jk-app}.
  \begin{description}
  \item[Case 1:] The residue field $K v_\infty$ is a model of $RCF$ or
    $ACF_0$.  In this case, we take $v = v_\infty$.  Since the
    valuation is equicharacteristic 0, the defectlessness and rough
    $p$-divisibility conditions are automatic.
  \item[Case 2:] The valuation $v_\infty$ is definable and the residue
    field is finite.  By \cite{NIPfields} Proposition 5.3, $K$ must
    have characteristic zero.  Let $\Gamma$ be the value group of
    $v_\infty$, let $\Delta$ be the smallest convex subgroup
    containing $v_\infty(p)$, and let $\Delta_0$ be the largest convex
    subgroup avoiding $v_\infty(p)$.  These groups are definable in
    $K^{sh}$, so there is a $K^{sh}$-interpretable factorization of
    the place $K \to Kv_\infty$:
    \begin{equation*}
      K \stackrel{\Gamma/\Delta_0}{\rightarrow} K'
      \stackrel{\Delta_0}{\rightarrow} Kv_\infty
    \end{equation*}
    If $K'$ is infinite, the place $K' \to Kv_\infty$ would violate
    Proposition 5.3 of \cite{NIPfields}, because $K' \to Kv_\infty$ is
    interpretable in the NIP structure $K^{sh}$.  So $K'$ is finite
    and $K' \to Kv_\infty$ is trivial, making $\Delta_0$ be trivial.

    As a consequence, $\Delta = \Delta/\Delta_0$, which is
    archimedean.

    The convex subgroup $\Delta$ decomposes $K \to Kv_\infty$ as
    \begin{equation*}
      K \stackrel{\Gamma/\Delta}{\rightarrow} K''
      \stackrel{\Delta}{\rightarrow} Kv_\infty
    \end{equation*}
    By saturation, $K \to Kv_\infty$ satisfies the countable chains of
    balls condition of Remark~\ref{2-o-3}.  Therefore, so does $K''
    \to Kv_\infty$, which has archimedean value group.  Thus $K'' \to
    Kv_\infty$ is spherically complete.

    Now $K'' \to K v_\infty$ is a spherically complete field of
    characteristic zero, with finite residue field, and value group
    isomorphic to $\Zz$.  It follows that $K''$ is actually a local
    field.  We take $v$ to be the valuation corresponding to $K \to
    K''$ (i.e., $v_\infty$ coarsened by $\Delta$).  As in Case 1, the
    defectlessness and rough $p$-divisibility conditions are
    automatic.
  \item[Case 3:] The residue field of $v_\infty$ is a model of
    $ACF_p$.  In this case, we will take $v = v_\infty$.

    Let $v_1$ be some definable valuation on $K$ whose residue field
    $Kv_1$ has characteristic $p$.  (If none such exists, then $K$ has
    characteristic zero and $1/p \in \mathcal{O}$ for any definable
    valuation ring $\mathcal{O}$, so $1/p \in \mathcal{O}_\infty$
    and $v_\infty$ is equicharacteristic 0, a contradiction.)  Note
    that the valuation $v_1$ might be trivial, and might be
    $v_\infty$.

    The place $K \to Kv_\infty$ factors as $K \to Kv_1 \to Kv_\infty$
    because $v_\infty$ is finer than $v_1$.  By Remark~\ref{no-p},
    $Kv_1$ has nothing to do with $p$.  Therefore, $Kv_1 \to
    Kv_\infty$ is roughly $p$-divisible and defectless.

    By Remark~\ref{2-o-3}, it remains to see that $K \to Kv_1$ is
    roughly $p$-divisible and defectless.  By Lemma~\ref{temp-defect},
    it suffices to show that the mixed-characteristic valuation $v_1$
    has unbounded ramification.

    Suppose not.  By Theorem~\ref{summary}.4, the $p$th-power map
    $K^\times \to K^\times$ has finite cokernel. Let $\mathcal{O}$ and
    $\mathfrak{m}$ denote the valuation ring and maximal ideal of
    $v_1$. Applying the snake lemma to
    \begin{equation*}
      \xymatrix{1 \ar[r] & \mathcal{O}^\times \ar[r] \ar[d] &
        K^\times \ar[r] \ar[d] & \Gamma \ar[r] \ar[d] & 1
        \\ 1 \ar[r] & \mathcal{O}^\times \ar[r] & K^\times
        \ar[r] & \Gamma \ar[r] & 1}
    \end{equation*}
    and
    \begin{equation*}
      \xymatrix{1 \ar[r] & (1 + \mathfrak{m})^\times \ar[r] \ar[d] &
        \mathcal{O}^\times \ar[r] \ar[d] & Kv_1^\times \ar[r] \ar[d] &
        1 \\ 1 \ar[r] & (1 + \mathfrak{m})^\times \ar[r] &
        \mathcal{O}^\times \ar[r] & Kv_1^\times \ar[r] & 1}
    \end{equation*}
    where all the vertical maps are multiplication by $p$, we see that
    the $p$th power map on $(1 + \mathfrak{m})^\times$ also has finite
    cokernel.

    If there was bounded ramification, then $\mathfrak{m} = (\tau)$
    for some element $\tau$ of minimal $v_1$-valuation.  The $p$th
    power map on $1+ (\tau)$ lands in $1 + (\tau^p, p \cdot \tau)
    \subseteq 1 + \mathfrak{m}^2$.

    However, $(1 + \mathfrak{m})^\times/(1 + \mathfrak{m}^2)^\times
    \cong \mathcal{O}/\mathfrak{m} \cong Kv_1$.  So $Kv_1$ must be
    finite, which is absurd, because it has a place $Kv_1 \to
    Kv_\infty$ with algebraically closed residue field.
  \end{description}
\end{proof}

\section{VC-minimal fields} \label{sec:vc}
In \cite{dpSmall} Definition 1.4, Guingona makes the following definition:
\begin{definition}
  A theory $T$ is dp-small if there does \emph{not} exist a model $M
  \models T$, formulas $\phi_i(x;y_i)$ with $|x| = 1$, and a formula
  $\psi(x;z)$, and elements $a_{ij}, b_i, c_j$ such that
  \begin{equation*}
    M \models \phi_i(a_{i'j},b_i) \iff i = i'
  \end{equation*}
  \begin{equation*}
    M \models \psi(a_{ij'},c_j) \iff j = j'
  \end{equation*}
\end{definition}
The sort of pattern here is more general than the one in the
definition of dp-minimality, so dp-smallness is a stricter condition
than dp-minimality.

Like dp-minimality, dp-smallness is preserved under reducts and under
naming parameters.  Guingona shows that VC-minimal fields are
dp-small.

\begin{theorem}
  \label{dp-small}
  Let $K$ be a dp-small field.  Then $K$ is algebraically closed or
  real closed.
\end{theorem}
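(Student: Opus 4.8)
The plan is to use the classification of dp-minimal fields together with the structural output of Theorem~\ref{jk-app}, thereby reducing the whole statement to the assertion that \emph{a dp-minimal field carrying a non-trivial definable valuation cannot be dp-small}.

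First I would record the cheap reductions. Dp-smallness passes to elementary extensions, and ``algebraically closed or real closed'' is an elementary property, so we may take $K$ sufficiently saturated. As remarked just before Definition~\ref{rough} in the dp-minimal case, $K^{sh}$ is dp-small whenever $K$ is (the same quantifier-elimination argument applies), and dp-smallness is inherited by reducts; so any valuation that is definable in $K$ --- or more generally in $K^{sh}$ --- may be named without destroying dp-smallness. Now suppose toward a contradiction that $K$ is neither algebraically closed nor real closed.

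Next I would invoke Theorem~\ref{jk-app}. Let $\mathcal{O}_\infty$ be the intersection of all definable valuation rings on $K$. If $\mathcal{O}_\infty = K$, then by part~\ref{tres} of Theorem~\ref{jk-app} the residue field of the trivial valuation --- namely $K$ itself --- is finite, real closed, or algebraically closed; since $K$ is infinite, $K$ is real closed or algebraically closed, contradicting our assumption. Hence $\mathcal{O}_\infty \subsetneq K$ and $K$ carries a non-trivial definable valuation. Moreover, combining Theorem~\ref{summary} with the analysis of residue fields in the classification (Theorem~\ref{thm-obnoxious}), one sees that --- after possibly composing with the canonical valuation on a residue field elementarily equivalent to a $p$-adic field, or passing to the finest definable valuation with finite residue field --- we may choose a definable non-trivial henselian valuation $w$ on $K$ whose value group $\Delta$ is \emph{not} $p$-divisible for some prime $p$, with $|\Delta/p\Delta|$ finite; equivalently, $K$ admits a definable surjection $K^\times \to \Delta$ onto a non-trivial ordered abelian group that is not $p$-divisible. (In the remaining cases the finest definable valuation would make $K$ itself real closed or algebraically closed.)

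The remaining --- and main --- step is to derive a contradiction with dp-smallness from such a valuation. The point is that $w$, being definable, lets one build inside $K$ (with parameters drawn from the home sort, since classes in $\Delta$ and in the residue field of $w$ are represented by field elements) a configuration of exactly the shape forbidden by dp-smallness: a single ``column'' formula $\psi(x;z)$ recording $w(x)$ together with residue data, an increasing family of ``row'' formulas $\phi_i(x;y_i)$ recording the class of $w(x)$ modulo $p^i\Delta$, and an array $(a_{ij})$ whose valuations encode the pair $(i,j)$ via the non-$p$-divisibility of $\Delta$. This is the field-theoretic analogue of Guingona's argument that VC-minimal ordered fields are real closed \cite{dpSmall}, transported to the present setting through the Shelah expansion; the delicate point --- where the field multiplication and the residue structure are genuinely used rather than just the pure ordered group $\Delta$ --- is arranging the \emph{single} formula $\psi$ to separate an unbounded number of columns. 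I expect this explicit construction to be the real obstacle; once it is in place, Theorem~\ref{dp-small}, and hence Theorem~\ref{vc-minimal} (since VC-minimal theories are dp-small), follow.
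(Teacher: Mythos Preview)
Your outline is on the right track but has two genuine gaps, and the paper's proof is considerably shorter than you expect because one of those gaps is a \emph{citation}, not a construction.

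First, the ``main step'' you leave for last --- producing a dp-smallness violation from a definable valuation with non-$p$-divisible value group --- is exactly Theorem~1.6.4 of \cite{dpSmall}: a dp-small ordered abelian group is divisible. Since the value group of a definable valuation is interpretable, this immediately gives that every definable valuation on a dp-small field has divisible value group. You do not need to hand-build the array of $\phi_i$'s and $\psi$; the paper simply quotes Guingona. (Your sketch of the configuration is vague in precisely the place where it would need to be careful: getting a \emph{single} $\psi$ to cut out unboundedly many columns.)

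Second, your reduction ``if $K$ is not algebraically closed or real closed then some definable valuation has non-$p$-divisible value group'' is asserted but not proved, and this is where the real content of the paper's argument lies. The valuation $v_\infty$ of Theorem~\ref{jk-app} is only type-definable, so knowing that each \emph{definable} coarsening has divisible value group does not immediately say anything about $v_\infty$. The paper handles this with a compactness argument: given $a \in K^\times$ and a prime $\ell$, for each definable $\mathcal{O} \supseteq \mathcal{O}_\infty$ divisibility yields $a = b^\ell c$ with $c \in \mathcal{O}^\times$; since $\mathcal{O}_\infty$ is a small filtered intersection of such $\mathcal{O}$'s, compactness produces $c \in \mathcal{O}_\infty^\times$, so $v_\infty(a) \in \ell \cdot v_\infty(K)$. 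One must also separately rule out the finite-residue case of Theorem~\ref{jk-app}(\ref{tres}): there $v_\infty$ is actually definable and (by Case~2 of the proof of Theorem~\ref{main-result}) has a least positive element, contradicting divisibility directly. Only then do the remaining (ACF or RCF residue) cases, together with defectlessness from Theorem~\ref{main-result} and divisibility of $v_\infty K$, force $K$ itself to be algebraically closed or real closed. Your detour through $K^{sh}$ and compositions with $p$-adic residue valuations is unnecessary.
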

\begin{proof}
  We may (and should) take $K$ to be sufficiently saturated.  By
  Theorem 1.6.4 of \cite{dpSmall}, the value group $vK$ is divisible
  for any definable valuation $v$ on $K$.

  By Theorem~\ref{jk-app}, there is a henselian valuation $v_\infty$
  on $K$ whose valuation ring is the intersection of all definable
  valuation rings on $K$.  The residue field of $v_\infty$ is
  algebraically closed, real closed, or finite.  In the finite case,
  $v_\infty$ is definable, and we saw in the proof of
  Theorem~\ref{main-result}, that the value group of $v_\infty$ has a
  least element, so the value group $v_\infty K$ is \emph{not}
  divisible, a contradiction.

  We must therefore be in case 1 or case 3 of the proof of
  Theorem~\ref{main-result}.  In particular, $v_\infty$ is a henselian
  defectless valuation on $K$ with an algebraically closed or real
  closed residue field.  For $K$ to be algebraically closed or real
  closed, it suffices to show that $v_\infty$ has a divisible value
  group (by Ax-Kochen-Ershov in the real closed case, and
  defectlessness in the algebraically closed case).

  Let $\ell$ be any prime.  Let $a$ be an element of $K^\times$.  For
  each definable valuation $\mathcal{O}$ on $K$, the value group
  $K^\times/\mathcal{O}^\times$ is $\ell$-divisible.  So there is an
  element $b \in K^\times$ and $c \in \mathcal{O}^\times$ such that $a
  = b^\ell \cdot c$.  The valuation ring $\mathcal{O}_\infty$ of
  $v_\infty$ is the intersection of a small ordered set of
  $\mathcal{O}$'s, so by compactness, we can find $b \in K^\times$ and
  $c \in \mathcal{O}_\infty^\times$ such that $a = b^\ell \cdot c$.
  Then $v_\infty(a) = \ell \cdot v_\infty(b)$.  So $v_\infty$ has
  $\ell$-divisible value group, for arbitrary $\ell$.
\end{proof}

\section{Proof sketch of Theorem~\ref{thm-obnoxious}}
\label{sec:obnoxious}
Let $(K,v)$ be a henselian defectless field with value group $\Gamma$
and residue field $k$.  Suppose that $\Gamma/n \Gamma$ is finite for
all $n \in \Nn$.  Supppose $k$ is elementarily equivalent to a local
field of characteristic 0, or $k \models ACF_p$, in which case
$[-v(p),v(p)] \subset p \cdot \Gamma$.

Theorem~\ref{thm-obnoxious} says two things:
\begin{itemize}
\item Completeness: the theory of $(K,v)$ as a valued field is
  completely determined by the theories of $k$, $\Gamma$, and the type
  of $v(p)$ in the mixed characteristic case.
\item Dp-minimality: $(K,v)$ is dp-minimal as a valued field.
\end{itemize}

We give an outline of the proof, as a series of exercises.  In what
follows, an inclusion $(K,v) \hookrightarrow (L,v)$ of valued fields
will be called \emph{pure} if the inclusion of value groups is, i.e.,
$v(L)/v(K)$ is torsionless.  ``Definable'' will mean ``definable with
parameters'' unless specified otherwise.
\begin{enumerate}
\item Let $(\Gamma,+,<)$ be an ordered abelian group
  such that $\Gamma / n \Gamma$ is finite for all $n \in \Nn$.  Show
  that $\Gamma$ has quantifier elimination after adding unary
  predicates for \emph{all} cuts and all sets of the form $\gamma_0 +
  n \Gamma$ ($\gamma_0 \in \Gamma$).
\item Let $\Gamma$ be an ordered abelian group such that $\Gamma / n
  \Gamma$ is finite for all $n \in \Nn$.  Show that every definable
  set in $\Gamma$ is a boolean combination of sets of the form
  $\gamma_0 + n \Gamma$ and definable cuts.
\item Let $(M,v)$ be a henselian field with residue characteristic
  $p$.  Suppose $M$ has nothing to do with $p$
  (Definition~\ref{ntdwp}).  Let $K$ be a pure subfield that is
  perfect and henselian, and has separably closed residue field.
  \begin{enumerate}
  \item Show that $\Gal(K)$ is solvable.
  \item Show that $K^{alg} \cap M$ has nothing to do with $p$.
  \item Show that $K^{alg} \cap M$ is the fixed field of a Hall
    prime-to-$p$ subgroup of $\Gal(K)$.
  \end{enumerate}
\item \label{ex1} Let $T_\Gamma$ be a theory of $p$-divisible ordered
  abelian groups, having quantifier elimination in some relational
  language $L_\Gamma$.  Let $T$ be the theory of henselian defectless
  valued fields $(K,v)$ with residue field modelling $ACF_p$, and with
  value group $\Gamma$.  Show that $T$ has quantifier elimination in
  the one-sorted language of valued fields expanded by all predicates
  of the form $R(v(x_1),\ldots,v(x_n))$ where $R$ is an $n$-ary
  predicate in $L_\Gamma$.
\item Prove the completeness part of Theorem~\ref{thm-obnoxious} by
  combining \ref{ex1} with Ax-Kochen-Ershov.  (In the mixed
  characteristic case, coarsen by the largest $p$-divisible convex
  subgroup of $\Gamma$ and use both AKE and \ref{ex1}.)
\item Suppose $(M,v)$ is a $\kappa$-strongly homogeneous henselian
  valued field of residue characteristic 0.  Suppose $K$ and $L$ are
  subfields of size less than $\kappa$, and $f : K
  \stackrel{\sim}{\to} L$ is an isomorphism of valued fields, such
  that the induced maps $v(K) \to v(L)$ and $\res(K) \to \res(L)$ are
  partial elementary maps on $v(M)$ and $\res(M)$.
  \begin{enumerate}
  \item Show that the inclusion $K \hookrightarrow M$ is pure if and
    only if $L \hookrightarrow M$ is pure.
  \item If $K \hookrightarrow M$ is pure, show that $f$ extends to an
    automorphism of $M$ (i.e., $f$ is a partial elementary map).
  \end{enumerate}
\item \label{cell1} Suppose $(M,v)$ is henselian of residue
  characteristic 0.  Suppose the $n$th power map $M^\times \to
  M^\times$ has finite cokernel for all $n \in \Nn$.  Show that the
  following collection of sets generates all definable subsets of $M$
  under translations, rescalings, and boolean combinations:
  \begin{itemize}
  \item Macintyre predicates $(M^\times)^n$
  \item Sets of the form $\res^{-1}(S)$ where $S$ is a definable
    subset of $\res(M)$.
  \item Sets of the form $v^{-1}(S)$ where $S$ is a definable subset
    of $v(M)$.
  \end{itemize}
  Hint: reduce to the case where $M$ is spherically complete and
  contains representatives from all cosets of $\bigcap_{n \in \Nn}
  (M^\times)^n$.  Consider 1-types over $M$.
\item \label{cell2} Suppose that $(K,v)$ is henselian and defectless,
  $\res(K) \models ACF_p$, and the value group $v(K)$ is
  $p$-divisible.  Show that the definable subsets of $K$ are generated
  under translations, rescalings, and boolean combinations, by sets of
  the form $v^{-1}(S)$ where $S$ is a definable subset of $v(K)$.
  Hint: reduce to the case where $K$ is spherically complete, and
  consider 1-types over $K$.
\item \label{acf-cell} Let $(K,v)$ be a henselian defectless valued
  field with algebraically closed residue field and value group
  $\Gamma$.  Suppose $\Gamma / n \Gamma$ is finite for all $n$.  If
  the residue characteristic is $p > 0$, suppose furthermore that
  $[v(p),-v(p)] \subseteq p \cdot \Gamma$.  Show that every definable
  subset of $K$ is a boolean combination of sets of the form
    \begin{equation*}
      \{x : v(x-c) \in \Xi\}
    \end{equation*}
    where $\Xi$ is a definable cut in $\Gamma$, and
    \begin{equation}
      \label{acf-c}
      \{x : v(b \cdot (x-c)) \in n \cdot \Gamma\}.
    \end{equation}
    Hint: combine \ref{cell1} and \ref{cell2}.
\item \label{rcf-cell} Let $(K,v)$ be a henselian valued field with
  real-closed residue field, and value group $\Gamma$ satisfying
  $\Gamma / n \Gamma$ finite for all $n$.
  \begin{enumerate}
  \item Show that $K$ admits finitely many orderings, which are all
    definable.
  \item Fix some ordering.  Show that every definable subset of $K$ is
    a boolean combination of sets of the form $\{x : x > c\}$,
    \begin{equation*}
      \{x : v(x-c) \in \Xi\}
    \end{equation*}
    where $\Xi$ is a definable cut in $\Gamma$, and
    \begin{equation}
      \label{rcf-c}
      \{x : v(b \cdot (x-c)) \in n \cdot \Gamma\}
    \end{equation}
    Hint: combine \ref{cell1} with o-minimality of RCF.
  \end{enumerate}
\item \label{pcf-cell} Let $(K,v)$ be a henselian valued field of
  mixed characteristic, with finite residue field and bounded absolute
  ramification.  Let $\Gamma$ be the value group, and suppose $\Gamma
  / n \Gamma$ is finite for all $n$.  Show that every definable subset
  of $K$ is a boolean combination of sets of the form
  \begin{equation}
    \{x : v(x - c) \in \Xi\}
  \end{equation}
  where $\Xi$ is a definable cut in $\Gamma$, and
  \begin{equation}
    \label{pcf-c}
    \{x : P_n(b \cdot (x-c)) \}
  \end{equation}
  where $P_n$ is the $n$th Macintyre predicate.
\item \label{acvf-cut} In the setting of \ref{acf-cell} or
  \ref{pcf-cell}, let $L_0$ be the language of valued fields expanded
  with all predicates of the form $v^{-1}(\Xi)$ for $\Xi$ a definable
  cut in the value group.  Show that $(K,v)$ is dp-minimal on the
  level of quantifier-free $L_0$ formulas.  More specifically, show
  that if $b_1, b_2, \ldots$ and $c_1, c_2, \ldots$ are mutually
  indiscernible sequences of tuples from $K$ and $a \in K^1$, then
  either $b_1, b_2, \ldots$ or $c_1, c_2, \ldots$ is
  quantifier-free-$L_0$-indiscernible over $a$.  Hint: ACVF remains
  dp-minimal after naming all cuts in the value group.
\item \label{rcvf-cut} In the setting of \ref{rcf-cell}, let $L_0$ be
  the language of valued fields expanded with a binary predicate for
  one of the orderings, as well as all predicates of the form
  $v^{-1}(\Xi)$ for $\Xi$ a definable cut in the value group.  Show
  that $(K,v)$ is dp-minimal on the level of quantifier-free $L_0$
  formulas.  More specifically, show that if $b_1, b_2, \ldots$ and
  $c_1, c_2, \ldots$ are mutually indiscernible sequences of tuples
  from $K$ and $a \in K^1$, then either $b_1, b_2, \ldots$ or $c_1,
  c_2, \ldots$ is quantifier-free-$L_0$-indiscernible over $a$.  Hint:
  RCVF remains dp-minimal after naming all cuts in the value group.
\item \label{step-1} In the setting of \ref{acf-cell}, \ref{rcf-cell},
  or \ref{pcf-cell}, suppose $(K,v)$ fails to be dp-minimal, and is
  sufficiently saturated.  Show that there is an indiscernible
  sequence $\langle C_i \rangle_{i \in \Zz}$ of sets of the form
  (\ref{acf-c}), (\ref{rcf-c}), or (\ref{pcf-c}), respectively, and an
  element $a \in K$ which belongs to some but not all of the $C_i$'s,
  such that the sequence $\langle c_i \rangle_{i \in \Zz}$ of
  ``centers'' of the $C_i$'s is $L_0$-quantifier-free indiscernible
  over $a$.  (Hint: take a failure of dp-minimality.  Write the sets
  explicitly in terms of the ``cell-like'' sets given in
  \ref{acf-cell}, \ref{rcf-cell}, or \ref{pcf-cell}, respectively.
  Arrange for the parameters to be mutually indiscernible and
  $\Zz$-indexed.  Let $a$ be an element which is in the zeroth set in
  the first row and the zeroth set in the second row, but in no
  others.  Use \ref{acvf-cut} or \ref{rcvf-cut} to choose one of the
  two rows of sets whose parameter sequence is still
  $L_0$-indiscernible over $a$.  Using the explicit expression of the
  sets in the chosen row in terms of the ``cell-like'' sets, find
  ``cell-like'' $C_i$'s.  By choice of $L_0$, they must be of the form
  (\ref{acf-c}), (\ref{rcf-c}), or (\ref{pcf-c}), rather than
  $v^{-1}(\Xi)$ for $\Xi$ a definable cut.)
\item \label{step2-ar} In the setting of \ref{step-1} plus
  \ref{acf-cell} or \ref{step-1} plus \ref{rcf-cell}, argue that $v(a
  - c_i)$ depends on $i$.  Using the $L_0$-indiscernibility of the
  $i$'s over $a$, conclude that $a$ is a pseudo-limit of the $c_i$'s
  as $i$ goes to $+\infty$, or as $i$ goes to $-\infty$.  In the
  former case, argue that $v(a - c_i) = v(c_N - c_i)$ for $N > i$.
  Use full indisernibility of the $c_i$'s over $\emptyset$ to conclude
  that the coset
  \begin{equation*}
    v(c_i - c_j) + \bigcap_{n \in \Nn} n \Gamma
  \end{equation*}
  doesn't depend on $i \ne j$.  Conclude that $v(a - c_i) - v(a - c_j)
  \in \bigcap_{n \in \Nn} n \Gamma$ for all $n$ and obtain a
  contradiction.
\item \label{step2-p} In the setting of \ref{step-1} plus
  \ref{pcf-cell}, argue that $rv_n(a - c_i)$ depends on $i$.  Using
  the $L_0$-indiscernibility of the $i$'s over $a$, argue that $rv_n(a
  - c_i) \ne rv_n(a - c_j)$ for $i \ne j$.  Use the finiteness of the
  projection $rv_n(K) \to v(K)$ to argue that $v(a - c_i) \ne v(a -
  c_j)$ for $i \ne j$.  As in \ref{step2-ar} argue that the $c_i$
  pseudoconverge to $a$, perhaps after reversing their order.
  Conclude that $rv_n(a - c_i) = rv_n(c_N - c_i)$ for $N \gg i$.  Use
  full indiscernibility of the $c_i$'s over $\emptyset$ to conclude
  that the coset
  \begin{equation*}
    rv_n(c_i - c_j) + \bigcap_{m \in \Nn} m \cdot rv_n(K)
  \end{equation*}
  doesn't depend on $i, j$.  Conclude that $rv_n(a - c_i) - rv_n(a -
  c_J) \in \bigcap_{m \in \Nn} m \cdot rv_n(K)$ and obtain a
  contradiction.
\item If $(K,v)$ is henselian with suitable value group and a residue
  field that is non-archimedean local of characteristic 0, prove that
  $(K,v)$ is dp-minimal by viewing $v$ as a coarsening of a valuation
  $w$ such that $(K,w)$ is as in \ref{pcf-cell}.
\end{enumerate}
We also remark that the equicharacteristic zero case of
Theorem~\ref{thm-obnoxious} has been proven in unpublished work by
Chernikov and Simon: they show that an equicharacteristic 0 valued
field with inp-minimal value group and residue field is itself
inp-minimal.


\renewcommand{\abstractname}{Acknowledgments}
\begin{abstract}
  I'd like to thank Tom Scanlon for telling me about \cite{JK} and
  \cite{NIPfields}, and Nick Ramsey for informing me of the work of
  Jahnke, Simon, and Walsberg (soon to appear as \cite{JSW}).
\end{abstract}

\bibliographystyle{plain}
\bibliography{mybib}{}

\end{document}